\def\dcl{\mathop{\text{dcl}_{}}\nolimits}					
\def\acl{\mathop{\text{acl}_{}}\nolimits}					
\def\lN{\mathop{\mathcal{L}_{\mathbb{N}}}\nolimits}	 		
\def\ln{\mathop{\mathcal{L}_{[n]}}\nolimits}					
\def\equalinlaw{\mathop{=_{\mathcal{D}}}\nolimits}			
\def\age{\mathop{\text{age}}\nolimits}					
\def\Nb{\mathop{\mathbb{N}_{}}\nolimits}					
\def\im{\mathop{\text{im}}\nolimits}						
\def\ar{\mathop{\mathrm{ar}}\nolimits}	
\newtheorem{theorem}{Theorem}[section]
\newtheorem{lemma}[theorem]{Lemma}
\newtheorem{prop}[theorem]{Proposition}
\newtheorem{question}[theorem]{Question}
\theoremstyle{definition}
\newtheorem{definition}[theorem]{Definition}
\newtheorem{remark}[theorem]{Remark}
\newtheorem{example}[theorem]{Example}
\newtheorem{claim}{Claim}
 \newenvironment{claimproof}{\begin{proof}}{\end{proof}}
\def \rng{\operatorname{rng}}
\def\dotminussym#1#2{%
  \setbox0=\hbox{$\m@th#1-$}%
  \kern.5\wd0%
  \hbox to 0pt{\hss\hbox{$\m@th#1-$}\hss}%
  \raise.6\ht0\hbox to 0pt{\hss$\m@th#1.$\hss}%
  \kern.5\wd0}
\mathchardef\mhyphen="2D
\begin{document}

\title{Relatively exchangeable structures}
\author{Harry Crane and Henry Towsner}
\address {Department of Statistics \& Biostatistics, Rutgers University, 110 Frelinghuysen Avenue, Piscataway, NJ 08854, USA}
\email{hcrane@stat.rutgers.edu}
\urladdr{\url{http://stat.rutgers.edu/home/hcrane}}
\address {Department of Mathematics, University of Pennsylvania, 209 South 33rd Street, Philadelphia, PA 19104-6395, USA}
\email{htowsner@math.upenn.edu}
\urladdr{\url{http://www.math.upenn.edu/~htowsner}}
\thanks{H.\ Crane is partially supported by NSF grant DMS-1308899 and NSA grant H98230-13-1-0299.}
\thanks{ H.\ Towsner is partially supported by NSF grant DMS-1340666.}

\subjclass{03C07 (Basic properties of first-order languages and structures); 03C98 (Applications of model theory); 60G09 (exchangeability)}
\keywords{exchangeability; Aldous--Hoover theorem; relational structure; Fra\"{i}sse limit; amalgamation}

\date{\today}

\begin{abstract}
We study random relational structures that are \emph{relatively exchangeable}---that is, whose distributions are invariant under the automorphisms of a reference structure $\mathfrak{M}$.
When $\mathfrak{M}$ is {\em ultrahomogeneous} and has {\em trivial definable closure}, all random structures relatively exchangeable with respect to $\mathfrak{M}$ satisfy a general Aldous--Hoover-type representation.
If $\mathfrak{M}$ also satisfies the {\em $n$-disjoint amalgamation property} ($n$-DAP) for all $n\geq1$, then relatively exchangeable structures have a more precise description whereby each component depends locally on $\mathfrak{M}$.
\end{abstract}

\maketitle

\section{Introduction}\label{section:introduction}

\subsection{Relational structures}

A \emph{signature} is a finite\footnote{All structures in this paper are finite relational structures.} set $\mathcal{L}=\{R_1,\ldots,R_r\}$ and, for each $j\leq r$, a positive integer $\ar(R_j)$, called the \emph{arity} of $R_j$.  An {\em $\mathcal{L}$-structure} is a collection $\mathfrak{M}=(M,\mathcal{R}_1,\ldots,\mathcal{R}_r)$, where $M$ is a set and $\mathcal{R}_j\subseteq M^{\ar(R_j)}$ for each $j\in[1,r]:=\{1,\ldots,r\}$.  
We write $|\mathfrak{M}|:=M$ and $R_j^{\mathfrak{M}}:=\mathcal{R}_j$ for each $j\in[1,r]$.
In general, we write $\mathcal{L}_M$ to denote the set of $\mathcal{L}$-structures $\mathfrak{M}$ for which $|\mathfrak{M}|=M$.
Specifically, $\lN$ denotes $\mathcal{L}$-structures with $|\mathfrak{M}|=\mathbb{N}:=\{1,2,\ldots\}$ and $\ln$ denotes $\mathcal{L}$-structures with $|\mathfrak{M}|=[n]:=[1,n]$.

Every injection $\phi:M'\rightarrow M$ maps $\mathcal{L}_M$ into $\mathcal{L}_{M'}$ in the usual way: $\mathfrak{M}\mapsto\mathfrak{M}^{\phi}:=(M',\mathcal{R}_1^{\phi},\ldots,\mathcal{R}_r^{\phi})$ with
\[(s_1,\ldots,s_{\ar(R_j)})\in\mathcal{R}_j^{\phi}\quad\Longleftrightarrow\quad(\phi(s_1),\ldots,\phi(s_{\ar(R_j)}))\in\mathcal{R}_j.\]
We call $\phi$ an \emph{embedding} of $\mathfrak{M}^{\phi}$ into $\mathfrak{M}$, written $\phi:\mathfrak{M}^{\phi}\rightarrow\mathfrak{M}$.  In particular, every permutation $\sigma:M\rightarrow M$ determines a {\em relabeling} of any $\mathfrak{M}\in\mathcal{L}_{M}$.
When $M'\subset M$, the inclusion map, $s\mapsto s$, determines the {\em restriction} of $\mathfrak{M}$ by
\[\mathfrak{M}|_{M'}:=(M',\mathcal{R}_1\cap M'^{\ar(R_1)},\ldots,\mathcal{R}_r\cap M'^{\ar(R_r)}).\]

If $\mu$ is a probability measure on $\mathcal{L}_M$, we write $\mathfrak{X}\sim\mu$ to denote that $\mathfrak{X}$ is a random structure chosen according to $\mu$; in this case we call $\mathfrak{X}$ a {\em random $\mathcal{L}$-structure on $M$}. 
We call a pair $\mathfrak{X}$ and $\mathfrak{Y}$ of random $\mathcal{L}$-structures on $M$ \emph{equal in distribution}, written $\mathfrak{X}\equalinlaw\mathfrak{Y}$, if $\mathbb{P}(\mathfrak{X}|_S=\mathfrak{S})=\mathbb{P}(\mathfrak{Y}|_S=\mathfrak{S})$  for every $\mathfrak{S}\in\mathcal{L}_S$, for all finite $S\subseteq M$.

\subsection{Relative exchangeability}

Special cases of $\mathcal{L}$-structures include binary relations, set partitions, undirected graphs, triangle-free graphs, as well as composite objects, e.g., a set together with a binary relation, a pair of graphs, etc.
We are particularly interested in random $\mathcal{L}$-structures that satisfy natural invariance properties with respect to the symmetries of another structure, of which exchangeability is a special case.

\begin{definition}[Exchangeability]\label{defn:exchangeability}
Let $\mathcal{L}$ be a signature.
A random $\mathcal{L}$-structure $\mathfrak{X}$ is {\em exchangeable} if $\mathfrak{X}^{\sigma}\equalinlaw\mathfrak{X}$ for all permutations $\sigma:|\mathfrak{X}|\rightarrow|\mathfrak{X}|$.
We also call a probability measure $\mu$ {\em exchangeable} whenever $\mathfrak{X}\sim\mu$ is an exchangeable $\mathcal{L}$-structure.
\end{definition}

Given a large structure $\mathfrak{U}=(\Omega,\mathcal{R}_1,\ldots,\mathcal{R}_r)$ and a probability measure $\mu$ on $\Omega$, we can obtain an exchangeable random $\mathcal{L}$-structure $\mathfrak{X}=(\Nb,\mathcal{X}_1,\ldots,\mathcal{X}_r)$ by sampling elements $\phi(1),\phi(2),\ldots$ independently and identically distributed (i.i.d.)~from $\mu$ and then defining $\mathfrak{X}=\mathfrak{U}^{\phi}$.
Explicit representations of exchangeable structures are detailed in the work of de Finetti \cite{deFinetti1937}, Aldous \cite{Aldous1981}, Hoover \cite{Hoover1979}, and Kallenberg \cite{KallenbergSymmetries}.
 As a special case, the Aldous--Hoover theorem \cite{Aldous1981,Hoover1979} characterizes the exchangeable random $k$-ary hypergraphs $\mathfrak{X}=(\Nb,\mathcal{X})$---that is, the exchangeable random structures with a single symmetric $k$-ary relation---through the decomposition
\begin{equation}\label{eq:AH-rep}
\vec x\in \mathcal{X}\quad\Longleftrightarrow\quad f((\xi_{s})_{s\subseteq \rng\vec x})=1,\end{equation}
where $f$ is Borel measurable, the random variables $\xi_{s}$ are i.i.d.\ Uniform$[0,1]$, and $\rng\vec x$ is the set of distinct elements in $\vec x$.  
For instance, an exchangeable random graph can be generated by specifying a function $f:[0,1]^4\rightarrow\{0,1\}$ with $f(\cdot,b,c,\cdot)=f(\cdot,c,b,\cdot)$, selecting independent Uniform$[0,1]$ parameters $\xi_{\emptyset}$, $\xi_{\{i\}}$ for each $i\in \Nb$, and $\xi_{\{i,j\}}$ for each pair $i<j$, and including the edge $\{i,j\}$ exactly when
\[f(\xi_{\emptyset},\xi_{\{i\}},\xi_{\{j\}},\xi_{\{i,j\}})=1.\]

Exchangeable structures not only play a fundamental role in probability theory \cite{AldousExchangeability,KallenbergSymmetries}, Bayesian inference \cite{deFinetti1937}, and applications in population genetics \cite{Kingman1978a} but also have a natural place in the study of homogeneous structures in combinatorics \cite{PetrovVershik2010} and mathematical logic \cite{AFP,AFKP,AFNP}.
In many applications, e.g., spin-glass models in statistical physics \cite{AustinPanchenko2014} and combinatorial stochastic processes \cite{Bertoin2006,Pitman2005}, a random structure $\mathfrak{X}$ is only invariant under relabeling by permutations that fix certain substructures of a reference object $\mathfrak{M}$, leading to our notion of relative exchangeability.

\begin{definition}[Relative exchangeability]\label{defn:relative exchangeability}
Let $\mathcal{L},\mathcal{L}'$ be signatures and $\mathfrak{M}$ be an $\mathcal{L}$-structure.
A random $\mathcal{L}'$-structure $\mathfrak{X}$ is called {\em relatively exchangeable with respect to $\mathfrak{M}$}, alternatively {\em exchangeable relative to $\mathfrak{M}$} or {\em $\mathfrak{M}$-exchangeable}, if $\mathfrak{X}|_T^{\phi}\equalinlaw\mathfrak{X}|_S$ for all embeddings $\phi:\mathfrak{M}|_S\rightarrow\mathfrak{M}|_T$ with $S,T\subseteq|\mathfrak{M}|$ finite.
\end{definition}

\begin{remark}\label{rmk:distinction}
Relative exchangeability requires more than invariance of $\mathfrak{X}$ with respect to the automorphisms of $\mathfrak{M}$; it requires that when $S$ and $T$ are isomorphic substructures, the marginal distributions of $\mathfrak{X}|_S$  and $\mathfrak{X}|_T$ are the same.  
This means that the marginal distribution of substructures $\mathfrak{X}|_T$ depends only on the symmetries of the associated substructure $\mathfrak{M}|_T$.
In particular, $\phi:\Nb\rightarrow\Nb$ may be an injection that is not an automorphism of $\mathfrak{M}$ but whose domain restriction $\phi\upharpoonright T: T\to T'$ is an embedding $\mathfrak{M}|_T\to\mathfrak{M}$.
In this case, $\mathfrak{M}^{\phi}|_{T}=\mathfrak{M}|_{T'}^{\phi\upharpoonright T}=\mathfrak{M}|_{T}$ and $\mathfrak{X}^{\phi}|_{T}=\mathfrak{X}|_{T'}^{\phi\upharpoonright T}\equalinlaw\mathfrak{X}|_{T}$.
\end{remark}

\begin{remark}\label{rmk:moot}
The distinction noted in Remark \ref{rmk:distinction} does not factor into our main discussion.
Since we always assume $\mathfrak{M}$ is {\em ultrahomogeneous} (see coming paragraphs and Definition \ref{def:ultra}), Definition \ref{defn:relative exchangeability} is equivalent to the apparently weaker requirement that $\mathfrak{X}$ is invariant with respect to the automorphisms of $\mathfrak{M}$.
In general, the stronger condition of Definition \ref{defn:relative exchangeability} is more relevant in many statistical and probabilistic contexts, and its equivalence to the weaker form is useful in our proofs.
\end{remark}
\begin{remark}
The classical definition of exchangeability corresponds to relative exchangeability with $\mathcal{L}=\emptyset$.
\end{remark}

The notion of relative exchangeability is most interesting when $\mathfrak{M}$ has many partial automorphisms.  One natural condition to place on $\mathfrak{M}$ is \emph{trivial (group-theoretic) definable closure} (Definition \ref{defn:TDC}), which says that, for any finite subset $s$ and any $a\not\in s$, there are automorphisms $\phi$ of $\mathfrak{M}$ that fix every element of $s$ but for which $\phi(a)\neq a$.  Ackerman, Freer \& Patel \cite{AFP} have previously shown that structures $\mathfrak{M}$ with trivial definable closure are exactly those for which there is an exchangeable probability measure concentrated on the class of structures isomorphic to $\mathfrak{M}$.

In this paper we will consider the case where $\mathfrak{M}$ has an additional property, ultrahomogeneity (Definition \ref{def:ultra}), which says that every finite partial automorphism extends to a full automorphism.  Ultrahomogeneity is a natural assumption in our intended application of relative exchangeability to Markov processes in spaces of countable relational structures \cite{CraneTowsner2015b}.  
In that setting, we often deal with ensembles of structures, not all of which are ultrahomogeneous but so that the ensemble embeds into a common ultrahomogeneous structure in a suitable way.
Together, these properties imply a strong representation for $\mathfrak{X}$, with each piece of $\mathfrak{X}$ depending locally on $\mathfrak{M}$, as we make precise in Theorem \ref{thm:nice1}.   

\subsection{Main theorems}
Above all we seek analogs of the Aldous--Hoover theorem for relatively exchangeable structures.
A formal description of our main theorems requires several technical conditions, which we defer until later.
For now we settle for an overview.

Our most general result gives a representation for all random structures that are exchangeable relative to $\mathfrak{M}$ with trivial definable closure and ultrahomogeneity.
We use the main theorem in \cite{AFP} to prove the generic Aldous--Hoover-type representation for any $\mathfrak{M}$-exchangeable structure when $\mathfrak{M}$ is ultrahomogeneous (Definition \ref{def:ultra}).
We can, however, refine the generic Aldous--Hoover representation when $\mathfrak{M}$ satisfies stronger properties.

In the more general setting of Theorem \ref{thm:main1}, we show that each $\mathfrak{X}|_{\rng\vec x}$ depends on the entire initial substructure $\mathfrak{M}|_{[1,\max\vec x]}$.  
Under the additional assumption that $\mathfrak{M}$ satisfies the $n$-disjoint amalgamation property for all $n\geq1$ (Definition \ref{def:DAP}),  Theorem \ref{thm:nice1} gives a stronger representation which describes $\mathfrak{X}|_{\rng\vec x}$ in terms of $\mathfrak{M}|_{\rng\vec x}$ and random variables similar to those in the usual Aldous--Hoover Theorem\footnote{A similar result has been independently shown by Ackerman \cite{2015arXiv150906170A} using a different argument which applies to a slightly different class of structures.}.  The $n$-disjoint amalgamation property is a finite amalgamation property that ensures any consistent collection of substructures can be embedded into some other substructure of $\mathfrak{M}$.

In the classical theory of exchangeability \cite{Aldous1981,deFinetti1937,Hoover1979}, dissociated structures play a key role as extreme points in the space of exchangeable random objects.
\begin{definition}[Dissociated random structures]\label{defn:dissociated}
A random $\mathcal{L}$-structure $\mathfrak{X}$ is {\em dissociated} if $\mathfrak{X}|_S$ and $\mathfrak{X}|_T$ are independent for all disjoint subsets $S,T\subseteq|\mathfrak{X}|$.
\end{definition}

In particular, exchangeable processes can be decomposed into an average over a family of dissociated processes.  We show that a similar decomposition exists for $\mathfrak{M}$-exchangeable processes when $\mathfrak{M}$ is sufficiently nice.

\subsection{Connections to the literature}
Our main theorems extend representations of exchangeable structures to the more general setting of relatively exchangeable structures.
Relatively exchangeable structures naturally appear when one considers dependent sequences of exchangeable structures, as in the study of combinatorial Markov processes \cite{Pitman2005}.    
Such processes serve as models in a wide range of applications, some mentioned above, and so are of interest on their own.
These considerations invoke certain other technicalities from stochastic process theory, which we leave to the more probability-focused companion paper \cite{CraneTowsner2015b}.

Prior work of Diaconis \& Janson \cite{MR2463439} highlights the connection (via the Aldous--Hoover theorem) between exchangeable random graphs and the Lov\'asz--Szegedy theory of graph limits \cite{LovaszSzegedy2006}.
The extension of Aldous--Hoover to exchangeable $\mathcal{L}$-structures (Theorem \ref{AH-general}) makes plain the analogous connection between exchangeable $\mathcal{L}$-structures and the generalization of graph limits to $\mathcal{L}$-structures \cite{AroskarCummings2014}.

\subsection{Notation}

We adopt the following notational conventions: $\mathcal{L}$ and $\mathcal{L}'$ always denote signatures.  
 In general, we use fraktur letters ($\mathfrak{M}$, $\mathfrak{N}$, $\mathfrak{S}$, $\mathfrak{T}$) to denote structures.  
The base set is indicated by plain Roman letters ($M$, $N$, $S$, $T$).

For $\vec x=(x_1,\ldots,x_k)$, we write $\rng\vec x=\{x_1,\ldots,x_k\}$ to denote the set of distinct elements in $\vec x$ and we write $\vec y\subseteq\vec x$ to denote that $\vec y$ occurs as a subsequence of $\vec x$, that is, $\vec y=(x_{p_1},\ldots,x_{p_m})$ for an increasing sequence $p_1<\cdots<p_m$.

\subsection{Outline}

We organize the rest of the paper as follows.  In Section \ref{section:general} we describe the known results about exchangeable structures and Fra\"isse classes, which we need in the rest of the paper; we also provide context for the results that follow.  In Section \ref{section:summary} we describe two of our main results, a weaker representation whenever $\mathfrak{M}$ is ultrahomogeneous, and a stronger one when $\mathfrak{M}$ also has $n$-DAP for all $n$.  In Section \ref{section:proofs} we prove these results, giving an Aldous--Hoover type theorem for relatively exchangeable structures.
Along the way we provide several illustrative examples that should build intuition for how the assumed properties of $\mathfrak{M}$ play a role in our representation.
\section{Exchangeable Structures}\label{section:general}

Before proving our main theorems about random $\mathcal{L}$-structures, we first review some known results.
Throughout the paper, we equip $\mathcal{L}_{\Nb}$ with the product-discrete topology induced by the ultrametric
\[d(\mathfrak{M},\mathfrak{M}'):=1/\max\{n\in\Nb:\,\mathfrak{M}|_{[n]}\neq\mathfrak{M}'|_{[n]}\},\quad\mathfrak{M},\mathfrak{M}'\in\mathcal{L}_{\mathbb{N}},\]
under which $\lN$ is compact.
Equipping $\mathcal{L}_{\Nb}$ with the corresponding Borel $\sigma$-field allows us to ignore measure-theoretic technicalities to every extent possible.

\subsection{The Aldous--Hoover Theorem}

The Aldous--Hoover theorem has been generalized to exchangeable structures other than hypergraphs, e.g., \cite{AroskarCummings2014,MR2463439,KallenbergSymmetries}, including asymmetric or reflexive relations.  
These considerations introduce some (mostly notational) complications to the representation in \eqref{eq:AH-rep}.  One approach, taken in \cite{AroskarCummings2014,MR2463439}, is to break a single relation into several correlated relations.
For instance, a binary relation $\mathcal{R}$ consists of a unary relation $\{x\mid (x,x)\in\mathcal{R}\}$ and four binary relations corresponding to the four possible cases for a pair $(x,y)$.  Here we adopt a more uniform approach by including a random \emph{ordering} in addition to Uniform$[0,1]$ random variables.

\begin{definition}
  When $s$ is a finite set, by a \emph{uniform random ordering of $s$}, we mean an ordering $\prec_{s}$ of $s$ chosen uniformly at random.  Given $\prec_{\rng\vec x}$, we write $\prec_{\vec x}$ for the ordering of $[1,|\rng\vec x|]$ induced by $i\prec_{\vec x}j$ if and only if $x_i\prec_{\rng\vec x}x_j$.  If $x_i=x_j$, then $i\not\prec_{\vec x}j$ and $j\not\prec_{\vec x}i$.
\end{definition}

\begin{remark}
Note that $\prec_s$ is an ordering of the \emph{set} $s$, and whenever $\vec x$ is a \emph{sequence} of elements from $s$, we use $\prec_s$ to induce an ordering $\prec_{\vec x}$ of $[1,|\rng\vec x|]$.  The important feature is that when $\vec x$ and $\vec y$ are different orderings of $s$, $\prec_{\vec x}$ and $\prec_{\vec y}$ are distinct but related orderings.

For example, for $\vec x=(x_1,\ldots,x_k)$ and $\sigma:[k]\rightarrow[k]$ a permutation, we observe that
\[i\prec_{\vec x} j\quad\Longleftrightarrow\quad\sigma(i)\prec_{\sigma\vec x}\sigma(j),\]
where $\sigma\vec x:=(x_{\sigma(1)},\ldots,x_{\sigma(k)})$.
In particular, $\prec_{(x,y)}$ is always the opposite of $\prec_{(y,x)}$.
\end{remark}

\begin{definition}
  Let $\mathcal{L}=\{R_1,\ldots,R_r\}$ be a language so that each $R_i$ has $ar(R_i)\leq k$ and let $f_1,\ldots,f_r$ be Borel functions.  The \emph{exchangeable structure generated by} $f_1,\ldots,f_r$ is the structure $\mathfrak{X}^*=(\Nb,R^{\mathfrak{X}^*}_1,\ldots,R^{\mathfrak{X}^*}_r)$ given by choosing $(\xi_s)_{s\subseteq\Nb:\,|s|\leq k}$ i.i.d.\ Uniform$[0,1]$ and $(\prec_s)_{s\subseteq\Nb:\,|s|\leq k}$ independent uniform random orderings and putting
\[\vec x\in R^{\mathfrak{X}^*}_i\quad\Longleftrightarrow\quad f_i((\xi_{s})_{s\subseteq\rng\vec x},(\prec_{\vec y})_{\vec y\subseteq\vec x})=1.\]
\end{definition}
\begin{remark}We usually omit $\prec_{\vec y}$ when $|\rng\vec y|\leq 1$ because such an ordering is trivial.\end{remark}

\begin{remark}\label{remark:order}
For definiteness, we assume arguments are listed in some fixed order---say, lexicographical order of subsequences of $\vec x$.  For instance, when we write $f_i((\xi_s)_{s\subseteq\{x,y\}},(\prec_{\vec y})_{\vec y\subseteq(x,y)})$, we mean
\[f_i(\xi_\emptyset,\xi_{\{x\}},\xi_{\{y\}},\xi_{\{x,y\}},\prec_{(x,y)}).\]
Similarly, when we write $f_i((\xi_s)_{s\subseteq\{x,y,z\}},(\prec_{\vec y})_{\vec y\subseteq(x,y,z)})$, we mean
\[f_i(\xi_\emptyset,\xi_{\{x\}},\xi_{\{y\}},\xi_{\{z\}},\xi_{\{x,y\}},\xi_{\{x,z\}},\xi_{\{y,z\}},\xi_{\{x,y,z\}},\prec_{(x,y)},\prec_{(x,z)},\prec_{(y,z)},\prec_{(x,y,z)}).\]
\end{remark}

\begin{remark}
The presence of the $\prec_{\vec y}$ allows us to communicate between different orderings of $s$ without giving precedence to some extrinsic ordering (like the ordering $<$ on $\mathbb{N}$).
For instance, if $R$ is a binary relation, then
\[(1,2)\in R^{\mathfrak{X}^*}\quad\Longleftrightarrow\quad f(\xi_\emptyset,\xi_{\{1\}},\xi_{\{2\}},\xi_{\{1,2\}},\prec_{(1,2)})=1\]
while
\[(2,1)\in R^{\mathfrak{X}^*}\quad\Longleftrightarrow\quad f(\xi_\emptyset,\xi_{\{1\}},\xi_{\{2\}},\xi_{\{1,2\}},\prec_{(2,1)})=1.\]
Since $\prec_{(1,2)}$ is always the opposite of $\prec_{(2,1)}$, these may have different values.  
Similarly,
\[(1,1)\in R^{\mathfrak{X}^*}\quad\Longleftrightarrow\quad f(\xi_{\emptyset},\xi_{\{1\}},\prec_{(1,1)})=1,\]
where $\prec_{(1,1)}$ is necessarily an empty ordering.

Note that when $s\subsetneq s'$, $\prec_s$ and $\prec_{s'}$ are not correlated.
In particular, $\prec_{s'}$ need not extend $\prec_s$.
\end{remark}

\begin{remark}
  This representation is somewhat redundant: instead of encoding the ordering of $x$ and $y$ into the random ordering $\prec_{\{x,y\}}$, one could use $\xi_{\{x\}}$ and $\xi_{\{y\}}$---say, by determining that $x\prec_{\{x,y\}}y$ if and only if $\xi_{\{x\}}<\xi_{\{y\}}$.  Without loss of generality, we could assume that $f_i$ is required to be symmetric in $(\xi_s)_{s\subseteq\rng\vec x}$, which would eliminate this redundancy.  Since this would add further complications to an already involved definition for no clear benefit, we do not do so.

On the other hand, we could drop the parameters $(\prec_{\vec y})_{\vec y\subseteq\vec x}$ entirely.  We do not do so, because this would violate the stratification of data provided by the representation.  Later we need to separate the {\em unary data}, such as $\xi_{\{x\}}$, from the {\em binary data}, such as $\xi_{\{x,y\}}$ and $\prec_{\{x,y\}}$, and we need the asymmetry given by $\prec_{\{x,y\}}$ to be part of the binary data.
\end{remark}

We can now state the Aldous--Hoover theorem in a useful general way.
\begin{theorem}[Aldous \cite{Aldous1981}, Hoover \cite{Hoover1979}]\label{AH-general}
Let $\mathfrak{X}$ be an exchangeable $\mathcal{L}$-structure, where $\mathcal{L}=\{R_1,\ldots,R_r\}$.
Then there exist Borel functions $f_1,\ldots,f_r$ so that the exchangeable structure $\mathfrak{X}^*$ generated by $f_1,\ldots,f_r$ satisfies $\mathfrak{X}\equalinlaw\mathfrak{X}^*$.
\end{theorem}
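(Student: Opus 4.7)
The plan is to reduce Theorem~\ref{AH-general} to the classical Aldous--Hoover theorem for a single jointly exchangeable symmetric array of bits, and then to unpack that representation into the individual Borel maps $f_1,\ldots,f_r$. The complications relative to the symmetric-hypergraph case \eqref{eq:AH-rep} are that each $R_j$ may be asymmetric or reflexive, and that a single \emph{joint} representation is needed so that the $f_j$ share the unary, binary, \ldots~randomness coming from the same $\xi_s$'s.

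First I would bundle all of $\mathfrak{X}$ into a coarse statistic indexed by finite subsets. Let $k=\max_j\ar(R_j)$, and for each finite $s\subseteq\mathbb{N}$ with $|s|\leq k$ let $A(s)$ index all pairs $(j,\alpha)$ with $j\leq r$ and $\alpha\colon[\ar(R_j)]\to s$ a surjection. Define $\tau(s)\in\{0,1\}^{A(s)}$ by $\tau(s)_{(j,\alpha)}=1$ iff $(\alpha(1),\ldots,\alpha(\ar(R_j)))\in R_j^{\mathfrak{X}}$. The structure $\mathfrak{X}$ is recovered from $(\tau(s))_s$ (reflexive tuples $\vec x$ with $|\rng\vec x|<\ar(R_j)$ correspond to subsets $s$ of smaller size), and exchangeability of $\mathfrak{X}$ is precisely joint exchangeability of $(\tau(s))_s$ under the natural $\mathrm{Sym}(\mathbb{N})$-action on the index $s$ together with the induced action on the labels of $A(s)$. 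The classical Aldous--Hoover theorem for arrays indexed by finite subsets (e.g., Kallenberg~\cite{KallenbergSymmetries}) then supplies i.i.d.~Uniform$[0,1]$ variables $(\xi_s)_s$ and Borel maps $g_m$ so that, jointly in $s$, $\tau(s)\equalinlaw g_{|s|}((\xi_t)_{t\subseteq s})$, with $g_{|s|}$ equivariant under the $\mathrm{Sym}(s)$-actions on $A(s)$ and on the tuple of $\xi_t$'s.

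Finally I would decode into the individual $f_j$ using the random orderings $\prec$. A tuple $\vec x$ of length $\ar(R_j)$ with $s=\rng\vec x$ determines a surjection $\alpha_{\vec x}\colon[\ar(R_j)]\to s$, and the information distinguishing $\alpha_{\vec x}$ from its $\mathrm{Sym}(s)$-translates is exactly what one reads off from $\prec_s$ together with $\{\prec_{\vec y}:\vec y\subseteq\vec x\}$. Setting
\[
f_j\bigl((\xi_t)_{t\subseteq s},(\prec_{\vec y})_{\vec y\subseteq\vec x}\bigr):=\bigl(g_{|s|}((\xi_t)_{t\subseteq s})\bigr)_{(j,\alpha_{\vec x})},
\]
with $\alpha_{\vec x}$ decoded from these orderings, produces an exchangeable structure $\mathfrak{X}^*$ whose joint law over all $R_j^{\mathfrak{X}^*}$ matches that of $\mathfrak{X}$. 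The main obstacle I expect is the bookkeeping for asymmetric and reflexive relations: when $\vec x$ has coincidences $x_i=x_{i'}$, the ordering $\prec_{\vec x}$ is only an ordering of the equivalence classes of the coincidence pattern, so $\alpha_{\vec x}$ must be compatible with that pattern, and the equivariance of $g_{|s|}$ must be tracked accordingly. A direct check of finite-dimensional distributions on each coincidence type shows that the recipe remains correct on degenerate inputs and that the $f_j$ assemble into a single joint representation.
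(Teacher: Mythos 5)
The paper does not actually prove Theorem \ref{AH-general}: it is stated as a citation of Aldous and Hoover, with the surrounding discussion pointing to \cite{KallenbergSymmetries,MR2463439,AroskarCummings2014} for the extension from symmetric hypergraphs to general relational structures and remarking that the complications are ``mostly notational.'' So there is no in-paper argument to compare against; what you have written is essentially the reduction that those references carry out. Your encoding of $\mathfrak{X}$ as the array $\tau(s)\in\{0,1\}^{A(s)}$ is the same device as breaking each relation into correlated subset-indexed relations (the approach of Diaconis--Janson and Aroskar--Cummings), and your use of $\prec_s$ to recover the surjection $\alpha_{\vec x}$ is exactly the role the paper assigns to its random orderings. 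The overall plan is sound.

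The one place where you are leaning on more than the ``classical Aldous--Hoover theorem for arrays indexed by finite subsets'' is the equivariance of $g_{|s|}$. The array $\tau(s)$ is \emph{not} plainly exchangeable as a $\{0,1\}^{A_m}$-valued set-indexed array (after identifying $A(s)$ with a fixed $A_m$ via some enumeration of $s$): a permutation of $\mathbb{N}$ also permutes the coordinates of the value. The vanilla set-indexed representation therefore does not apply directly, and the equivariance of $g_{|s|}$ is not a cosmetic convenience but is needed for correctness. Concretely, if $g_{|s|}$ is fed the $\xi_t$ sorted by $\prec_s$ rather than by the order on $\mathbb{N}$, then for a non-equivariant $g$ the joint law over \emph{different} sets $s$ changes, because the pattern of which argument slots of $g_{|s|}$ and $g_{|s'|}$ share the same underlying $\xi_t$ for $t\subseteq s\cap s'$ depends on the (independent, hence mutually inconsistent) orderings $\prec_s$ and $\prec_{s'}$. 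With equivariance the $\prec$-sorted and $<$-sorted readings agree pointwise and the issue disappears. So you should either derive the equivariant subset-indexed representation from the jointly exchangeable ordered-tuple version of Aldous--Hoover (Kallenberg's representation for arrays indexed by injective tuples, which is where the asymmetric information such as the tournament lives), or prove the equivariant form directly; that derivation is precisely the ``mostly notational'' work the paper elides, and it is the only real content missing from your sketch.
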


We can decompose the structure $\mathfrak{X}^*$ into the structures $\mathfrak{X}^*_\alpha$ given by specifying $\xi_\emptyset=\alpha$ for each $\alpha\in[0,1]$:
\[\vec x\in R^{\mathcal{X}^*_\alpha}_i\quad\Longleftrightarrow\quad f_i(\alpha,(\xi_{s})_{\emptyset\subsetneq s\subseteq\rng\vec x},(\prec_{\vec y})_{\vec y\subseteq\vec x})=1.\]
Each $\mathfrak{X}^*_\alpha$ is \emph{dissociated} (Definition \ref{defn:dissociated}) and, thus, Theorem \ref{AH-general} affords the interpretation of arbitrary exchangeable structures as mixtures of dissociated exchangeable structures.  For the rest of this section, we assume $\mathfrak{X}$ is dissociated.

\begin{example}
One of the simplest interesting examples of an exchangeable,  dissociated structure is the {\em random graph} $\mathcal{X}$, which is defined by putting an edge between each pair $(n,m)$ according to the outcome of independent fair coin flips. 
In terms of the Aldous--Hoover theorem, the function $f$ can be chosen so that it depends only on the $\xi_{\{x,y\}}$ component.
The random graph is a standard example of an important family of structures---the \emph{Fra\"isse limits}---which play a key role in our general theory; see Sections \ref{section:summary} and \ref{section:proofs}.
\end{example}

\begin{example}
  An example illustrating what happens in the case of asymmetric relations is the random \emph{tournament}.  Recall that a tournament is a total directed graph such that between any two vertices there is exactly one directed edge.  The random tournament depends only on the uniform random ordering $\prec_{(x,y)}$, where the edge points from $x$ to $y$ if and only if $x\prec_{\{x,y\}}y$.
\end{example}

Any probability measure $\mu$ on $\lN$ induces a probability measure $\mu_n$ on $\mathcal{L}_{[n]}$ by
\[\mu_n(\mathfrak{S}):=\mu(\{\mathfrak{M}\in\lN:\,\mathfrak{M}|_{[n]}=\mathfrak{S}\}),\quad\mathfrak{S}\in\ln.\]
Define the {\em support} of $\mu$ as the set of finite structures for which $\mu_n$ is positive:
\[\text{support}(\mu):=\bigcup_{n\geq1}\{\mathfrak{S}\in\ln:\,\mu_n(\mathfrak{S})>0\}.\]
Since there are countably many finite subsets of $\Nb$,  every finite substructure of $\mathfrak{X}\sim\mu$ is isomorphic to a structure in support($\mu$) with probability 1. 
 That is, the \emph{age} of $\mathfrak{X}$ is, with probability 1, contained in the support of $\mu$.
\begin{definition}
The {\em age} of $\mathfrak{M}$, denoted $\age(\mathfrak{M})$, is the set of all finite $\mathcal{L}$-structures embedded in $\mathfrak{M}$.
That is,
\[\age(\mathfrak{M}):=\{\mathfrak{S}\in\bigcup_{n\in\Nb}\ln:\,\mathcal{H}(\mathfrak{S},\mathfrak{M})\text{ is non-empty}\},\]
where 
\[\mathcal{H}(\mathfrak{S},\mathfrak{M}):=\{\text{embeddings }\phi:\mathfrak{S}\rightarrow\mathfrak{M}\}\]
is the set of embeddings of $\mathfrak{S}$ in $\mathfrak{M}$.
\end{definition}
(Note that the usual definition of $\age(\mathfrak{M})$ includes all finite substructures of $\mathfrak{M}$, perhaps identified if they are isomorphic.  It is convenient for our purposes to specify the universe of our structures precisely as $[1,n]$.
Among other features, this ensures that structures in the age are canonically ordered.)

If $\mu_n(\mathfrak{S})>0$, then exchangeability and dissociation of $\mathfrak{X}$ imply that, with probability 1, there exists a finite set $S$ so that $\mathfrak{X}|_S$ is isomorphic to $\mathfrak{S}$.  (In fact, such sets occur with frequency $\mu_n(\mathfrak{S})$ with probability 1.)  
In the exchangeable and dissociated case, $\age(\mathfrak{X})$ and $\text{support}(\mu)$ coincide with probability 1. 
 In particular, there is a unique collection of finite structures determined by $\mu$ such that, with probability 1, $\age(\mathfrak{X})$ is equal to this collection.

Since $\mathfrak{X}$ is dissociated, $\age(\mathfrak{X})$ also satisfies the \emph{joint embedding property} with probability 1.
\begin{definition}
  A collection of finite structures $K$ has the {\em joint embedding property} (JEP) if for all $\mathfrak{S},\mathfrak{T}\in K$ there exists $\mathfrak{U}\in K$ such that $\mathfrak{S}$ and $\mathfrak{T}$ are embedded in $\mathfrak{U}$.
\end{definition}

Our main theorems require $\mathfrak{M}$ to have additional properties.

\begin{definition}[Ultrahomogeneity]\label{def:ultra}
An $\mathcal{L}$-structure $\mathfrak{M}$ is \emph{ultrahomogeneous} if every embedding $\phi:\mathfrak{N}\rightarrow\mathfrak{M}$, with $|\mathfrak{N}|\subseteq|\mathfrak{M}|$ finite, extends to an automorphism $\overline{\phi}:\mathfrak{M}\rightarrow\mathfrak{M}$.
\end{definition}

The following establishes the equivalence between our definition of relative exchangeabiity and its weaker form (see (ii) below).
By Proposition \ref{prop:equiv} our main theorems immediately generalize to this case.
\begin{prop}\label{prop:equiv}
Let $\mathfrak{M}$ be ultrahomogeneous and let $\mathfrak{X}$ be a random $\mathcal{L}'$-structure.
The following are equivalent.
\begin{itemize}
	\item[(i)] $\mathfrak{X}$ is relatively exchangeable with respect to $\mathfrak{M}$ as in Definition \ref{defn:relative exchangeability}.
	\item[(ii)] $\mathfrak{X}^{\sigma}\equalinlaw\mathfrak{X}$ for every automorphism $\sigma:\mathfrak{M}\to\mathfrak{M}$.
\end{itemize}
\end{prop}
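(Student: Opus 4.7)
The plan is to verify both implications by combining the standard ``pullback commutes with restriction'' identity for $\mathcal{L}$-structures with the hypothesis of ultrahomogeneity. Throughout, I would identify a random structure with its family of finite-dimensional marginals, so that $\mathfrak{Y} \equalinlaw \mathfrak{Z}$ is verified by checking $\mathbb{P}(\mathfrak{Y}|_U = \mathfrak{U}) = \mathbb{P}(\mathfrak{Z}|_U = \mathfrak{U})$ for all finite $U$ and all $\mathfrak{U}\in\mathcal{L}_U$.

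For (i) $\Rightarrow$ (ii), fix an automorphism $\sigma$ of $\mathfrak{M}$. For any finite $S\subseteq|\mathfrak{M}|$, the restriction $\sigma\upharpoonright S : S \to \sigma(S)$ is an embedding of $\mathfrak{M}|_S$ into $\mathfrak{M}|_{\sigma(S)}$ (since $\sigma$ preserves all relations). Applying (i) to this embedding yields $\mathfrak{X}|_{\sigma(S)}^{\sigma\upharpoonright S} \equalinlaw \mathfrak{X}|_S$. A direct unwinding of the pullback-and-restriction definitions shows $\mathfrak{X}|_{\sigma(S)}^{\sigma\upharpoonright S} = \mathfrak{X}^\sigma|_S$, so the $S$-marginal of $\mathfrak{X}^\sigma$ agrees with that of $\mathfrak{X}$. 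Since this holds for every finite $S$, we conclude $\mathfrak{X}^\sigma \equalinlaw \mathfrak{X}$.

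For (ii) $\Rightarrow$ (i), this is where ultrahomogeneity does the actual work. Given any embedding $\phi: \mathfrak{M}|_S \to \mathfrak{M}|_T$ with $S,T\subseteq|\mathfrak{M}|$ finite, view $\phi$ as an embedding $\mathfrak{M}|_S \to \mathfrak{M}$ of the finite substructure $\mathfrak{M}|_S$ into $\mathfrak{M}$. By Definition~\ref{def:ultra} it extends to a full automorphism $\overline{\phi}:\mathfrak{M}\to\mathfrak{M}$. Then (ii) applied to $\overline{\phi}$ gives $\mathfrak{X}^{\overline{\phi}} \equalinlaw \mathfrak{X}$, and in particular the $S$-marginals agree: $\mathfrak{X}^{\overline{\phi}}|_S \equalinlaw \mathfrak{X}|_S$. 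Because $\overline{\phi}\upharpoonright S = \phi$ takes $S$ into $T$, the same pullback-restriction identity as above shows $\mathfrak{X}^{\overline{\phi}}|_S = \mathfrak{X}|_T^{\phi}$, yielding $\mathfrak{X}|_T^\phi \equalinlaw \mathfrak{X}|_S$, which is (i).

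There is essentially no hard step: the content is entirely in the extension lemma provided by ultrahomogeneity, which converts a partial isomorphism between two finite substructures into a global symmetry of $\mathfrak{M}$, at which point (ii) applies. The main thing to be careful about is bookkeeping — specifically, checking that $\mathfrak{X}|_{\phi(S')}^{\phi\upharpoonright S'} = \mathfrak{X}^{\phi}|_{S'}$ whenever $\phi$ is an injection defined on a superset of $S'$. This is immediate from the definition of the pullback $\mathfrak{X}^\phi$, since membership of $\vec s\in S'^{\ar(R_j)}$ in $R_j^{\mathfrak{X}^\phi|_{S'}}$ and in $R_j^{\mathfrak{X}|_{\phi(S')}^{\phi\upharpoonright S'}}$ are both equivalent to $\phi(\vec s)\in R_j^{\mathfrak{X}}$.
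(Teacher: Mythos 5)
Your proof is correct and follows essentially the same route as the paper's: (i)$\Rightarrow$(ii) by specializing to restrictions of automorphisms, and (ii)$\Rightarrow$(i) by using ultrahomogeneity to extend a finite embedding to an automorphism and then applying the pullback--restriction identity $\mathfrak{X}^{\overline{\phi}}|_S = \mathfrak{X}|_T^{\phi}$. You are merely more explicit about the bookkeeping step, which the paper treats as automatic.
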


\begin{proof}
That (i) implies (ii) is automatic.  In the reverse direction, suppose $\mathfrak{X}$ satisfies (ii) and let $\phi:S\to T$ be an embedding $\mathfrak{M}|_S\to\mathfrak{M}$.
Since $\mathfrak{M}$ is ultrahomogeneous, there is an automorphism $\bar{\phi}:\mathfrak{M}\to\mathfrak{M}$ such that $\bar{\phi}\upharpoonright S=\phi$.
By (ii), $\mathfrak{X}^{\bar{\phi}}\equalinlaw\mathfrak{X}$ and, in particular, $\mathfrak{X}^{\bar{\phi}}|_{S'}\equalinlaw\mathfrak{X}|_{S'}$ for all finite $S'\subseteq\Nb$.
Since $\bar{\phi}$ extends $\phi$, it follows that $\mathfrak{X}|_S\equalinlaw\mathfrak{X}^{\bar{\phi}}|_{T}=\mathfrak{X}|_T^{\phi}$, establishing (i).
\end{proof}

Suppose that $\mathfrak{X}$ is ultrahomogeneous with probability 1.  A standard back-and-forth argument shows that there is a single structure $\mathfrak{M}$ (up to isomorphism) such that, with probability 1, $\mathfrak{X}$ is isomorphic to $\mathfrak{M}$, and \cite[Theorem 1.1]{AFP} implies that $\age(\mathfrak{X})$ exhibits \textit{disjoint amalgamation}.
\begin{definition}[Disjoint amalgamation]\label{def:DAP}
A collection of finite structures $K$ has the {\em disjoint amalgamation property}\footnote{This is often called the \emph{strong amalgamation property}, as in \cite{AFP}.  We follow the authors who prefer ``disjoint'' on the grounds that ``strong'' is an overused adjective.} (DAP) 
 if for any $\mathfrak{S},\mathfrak{T},\mathfrak{T}'\in K$ and embeddings $\phi:\mathfrak{S}\rightarrow\mathfrak{T}$ and $\phi':\mathfrak{S}\rightarrow\mathfrak{T}'$ there exists a structure $\mathfrak{U}\in K$ and embeddings $\psi:\mathfrak{T}\rightarrow\mathfrak{U}$ and $\psi':\mathfrak{T}'\rightarrow\mathfrak{U}$ such that $\psi\circ\phi=\psi'\circ\phi'$ and $\im(\psi\circ\phi)=\im(\psi)\cap\im(\psi')$, where $\im(\phi):=\{t\in|\mathfrak{T}|:\,\exists s\in|\mathfrak{S}|\,(\phi(s)=t)\}$ is the {\em image} of $\phi$.
We often abuse the terminology slightly and say $\mathfrak{M}$ has DAP when $\age(\mathfrak{M})$ has DAP.
\end{definition}

Disjoint amalgamation implies that any pair of structures $\mathfrak{T},\mathfrak{T}'$ can be amalgamated into a larger structure without identifying any elements that are not already identified.
In the presence of ultrahomogeneity together with our restriction to languages with only relation symbols, disjoint amalgamation is equivalent to the trivial definable closure property mentioned in the introduction.  For our purposes, DAP is the more useful characterization to work with.

Ackerman, Freer \& Patel \cite[Corollary 1.3]{AFP} show that ultrahomogeneity and the disjoint amalgamation property for $\mathfrak{M}$ imply the existence of an exchangeable random structure that is almost surely isomorphic to $\mathfrak{M}$.

\begin{theorem}[Ackerman, Freer \& Patel \cite{AFP}]\label{AFP_theorem}
 Suppose $\mathfrak{M}$ is ultrahomogeneous and $\age(\mathfrak{M})$ satisfies the disjoint amalgamation property.  Then there is a dissociated, exchangeable random structure $\mathfrak{X}$ such that $\mathfrak{X}$ is isomorphic to $\mathfrak{M}$ with probability 1.
Moreover, there exist Borel functions $f_1,\ldots,f_r$ such that $\mathfrak{X}$ can be generated as the exchangeable structure of the form
\begin{equation}\label{eq:random-free}\vec x\in R^{\mathfrak{X}}_i\quad\Longleftrightarrow\quad f_i(\xi_{x_1},\ldots,\xi_{x_{\ar(R_i)}})=1,\end{equation}
for $(\xi_x)_{x\in\Nb}$ i.i.d.\ Uniform$[0,1]$ random variables.
\end{theorem}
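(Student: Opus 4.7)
The plan is to construct a Borel $\mathcal{L}$-structure $\mathfrak{U}$ on $[0,1]$ whose relations are Borel subsets $R_i^{\mathfrak{U}} \subseteq [0,1]^{\ar(R_i)}$, and then take $\mathfrak{X}$ to be the structure induced on an i.i.d.\ Uniform$[0,1]$ sample $(\xi_x)_{x \in \Nb}$. The representation \eqref{eq:random-free} then falls out immediately with $f_i = \mathbf{1}_{R_i^{\mathfrak{U}}}$, and exchangeability and dissociation of $\mathfrak{X}$ are automatic from the independence and identical distribution of the $\xi_x$. What remains is to build $\mathfrak{U}$ so that the sampled $\mathfrak{X}$ is almost surely isomorphic to $\mathfrak{M}$. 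Since DAP implies JEP and amalgamation and $\age(\mathfrak{M})$ is hereditary, Fra\"{\i}ss\'e's theorem characterizes $\mathfrak{M}$ as the unique countable ultrahomogeneous $\mathcal{L}$-structure with that age; a standard back-and-forth argument then reduces the target to showing that $\mathfrak{X}$ almost surely has age $\age(\mathfrak{M})$ and the \emph{one-point extension property}: for every finite substructure $\mathfrak{S}$ of $\mathfrak{X}$ and every one-point extension $\mathfrak{T} \in \age(\mathfrak{M})$ of $\mathfrak{S}$, some vertex of $\mathfrak{X}$ realizes $\mathfrak{T}$ over $\mathfrak{S}$.

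I would build $\mathfrak{U}$ inductively. At stage $n$ I maintain a finite Borel partition $\mathcal{P}_n$ of $[0,1]$ into cells of positive measure, refining $\mathcal{P}_{n-1}$, together with truth-value assignments fixing, for every tuple of distinct cells, what each relation $R_i$ says about any tuple of distinct representatives drawn from those cells. The inductive invariant is that these data define a structure on $\mathcal{P}_n$ that belongs to $\age(\mathfrak{M})$. At stage $n+1$ I address one task from a fixed enumeration of triples $(\mathfrak{S},\mathfrak{T},\iota)$, where $\mathfrak{S} \hookrightarrow \mathfrak{T}$ is a one-point extension in $\age(\mathfrak{M})$ and $\iota$ is an embedding of $\mathfrak{S}$ into the structure on $\mathcal{P}_n$: I split some cell $A \in \mathcal{P}_n$ into two positive-measure pieces $A_+$ and $A_-$, and require the relations between $A_+$ and the cells $\iota(|\mathfrak{S}|)$ to realize $\mathfrak{T}$ over $\iota(\mathfrak{S})$. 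Consistency then demands a compatible specification of the relations between $A_+$ and \emph{every} other cell of $\mathcal{P}_n$.

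The main obstacle is precisely this consistency step: I need to know that, given the existing structure on $\mathcal{P}_n$ and the required local type of $A_+$ over $\iota(\mathfrak{S})$, there is \emph{some} structure in $\age(\mathfrak{M})$ on $\mathcal{P}_n \cup \{A_+\}$ extending both the data on $\mathcal{P}_n$ and the prescribed $\mathfrak{T}$-type over $\iota(\mathfrak{S})$. This is exactly what DAP provides: amalgamating the structure on $\mathcal{P}_n$ with $\mathfrak{T}$ over the common substructure $\iota(\mathfrak{S})$ disjointly produces the required structure in $\age(\mathfrak{M})$ without identifying $A_+$ with any existing cell, and ultrahomogeneity lets me embed this amalgam canonically, pinning down the new relation values. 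It is also critical that DAP keeps $A_+$ genuinely new, which is what preserves the random-free form of \eqref{eq:random-free}: an edge coin at the level of $A_+$ is never needed, because the relation values are determined deterministically by the cell addresses of the arguments.

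To conclude, a Borel--Cantelli argument applied to the positive-measure cells $A_+$ dedicated to each extension task shows that the i.i.d.\ Uniform sample $(\xi_x)$ almost surely places points in each $A_+$, so every enumerated extension task is realized. Since every finite tuple from the sample lies in some tuple of cells of some $\mathcal{P}_n$, and my enumeration exhausts all embeddings $\iota$ and all one-point extensions $\mathfrak{T}$, the one-point extension property holds for $\mathfrak{X}$ almost surely. A final back-and-forth, together with the invariant that every $\mathcal{P}_n$-structure lies in $\age(\mathfrak{M})$ (so that $\age(\mathfrak{X}) \subseteq \age(\mathfrak{M})$ and equality follows from extensions), gives $\mathfrak{X} \cong \mathfrak{M}$ almost surely.
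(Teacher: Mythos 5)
The paper offers no proof of this statement---it is imported as a citation of Ackerman, Freer \& Patel \cite{AFP} (their Corollary 1.3, in the tradition of Petrov--Vershik)---so the comparison here is against the cited source rather than against an in-paper argument. Your construction is essentially the one used there: build a Borel $\mathcal{L}$-structure on $[0,1]$ by inductively refining a finite positive-measure partition whose cells carry a structure in $\age(\mathfrak{M})$, use disjoint amalgamation to adjoin, consistently with all existing cells, a new positive-measure cell witnessing each one-point extension task, and finish with Borel--Cantelli and back-and-forth. Your use of DAP is exactly right: amalgamating the current cell structure with $\mathfrak{T}$ over $\iota(\mathfrak{S})$ disjointly and then restricting to the old cells together with the single new point yields the required structure in $\age(\mathfrak{M})$ because the age is hereditary; note that ultrahomogeneity is not actually needed for that step, only for the Fra\"isse characterization at the end.

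The one place your write-up would fail as stated is the diagonal: you only assign relation values to tuples of representatives drawn from \emph{distinct} cells, so a pair of sample points lying in a common cell of every $\mathcal{P}_n$ never receives a relation value, and splitting only the cells demanded by the extension tasks does not force separation. For instance, repeatedly splitting the leftmost cell of $[0,1]$ in half leaves a sequence of never-again-split cells whose squared measures sum to $1/3$, so a positive measure of pairs is never separated and $\mathfrak{U}$ is undefined there. You must interleave extra refinement steps ensuring $\sum_{A\in\mathcal{P}_n}\mu(A)^{2}\to 0$ (so that almost every tuple of distinct reals is eventually separated into distinct cells); this is harmless because duplicating a cell---amalgamating two copies of the current cell structure over its restriction to the other cells---is again an instance of DAP and preserves the invariant that the cell structure lies in $\age(\mathfrak{M})$. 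With that repair, the argument is complete and matches the cited proof.
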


\subsection{$n$-DAP}

Our strongest results require an amalgamation property for all $n\geq1$ simultaneously.
\begin{definition}
Let $K$ be a collection of finite structures that is closed under isomorphism.
For $n\geq1$, we say that $K$ satisfies the \emph{$n$-disjoint amalgamation property} ($n$-DAP) if for every collection $(\mathfrak{S}_i)_{1\leq i\leq n}$ of structures satisfying $\mathfrak{S}_i\in K$, $|\mathfrak{S}_i|=[n]\setminus\{i\}$, and $\mathfrak{S}_i|_{[n]\setminus\{i,j\}}=\mathfrak{S}_j|_{[n]\setminus\{i,j\}}$ for all $1\leq i,j\leq n$ there exists $\mathfrak{S}\in K$ with $|\mathfrak{S}|=[n]$ such that $\mathfrak{S}|_{[n]\setminus\{i\}}=\mathfrak{S}_i$ for every $1\leq i\leq n$.
Again, we say $\mathfrak{M}$ has $n$-DAP when $\age(\mathfrak{M})$ does.
\end{definition}

Under $n$-disjoint amalgamation, if we specify a structure on each proper subset of $[n]$ in a way that is pairwise compatible, then there is a way to unify these structures into a single structure on all of $n$.  By slight abuse of terminology, if $K$ is a collection of finite structures not closed under isomorphism (like $\age(\mathfrak{M})$ as defined above), we say $K$ has $n$-DAP if the closure of $K$ under isomorphism has $n$-DAP.
When $K$ is closed under substructures (as all our classes will be), $2$-DAP is equivalent to DAP.

There is a simpler condition on the theory $\mathcal{T}$ which implies $n$-DAP for all $n\geq1$ and is satisfied by the most common examples.

\begin{definition}[Parametric universal theories]\label{defn:local}
$\mathcal{T}$ is a \emph{parametric universal theory} if each sentence in $\mathcal{T}$ has the form 
\[\forall x_1,\ldots,x_k\ \phi(x_1,\ldots,x_k),\] where $\phi$ is quantifier-free and every atomic formula in $\phi$ contains all $k$ variables $x_1,\ldots,x_k$.
\end{definition}

\begin{lemma}
  If $\mathcal{T}$ is a parametric universal theory with models of every finite size and $K$ is the set of finite models of $\mathcal{T}$ then $K$ satisfies $n$-DAP for all $n\geq1$.
\end{lemma}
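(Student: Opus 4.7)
The plan is to build the amalgamating structure $\mathfrak{S}$ by specifying its relations separately on two kinds of tuples: those whose set of values is a \emph{proper} subset of $[n]$, and those that hit every element. The first kind will be forced on us by the $\mathfrak{S}_i$'s, and the parametric hypothesis will let us handle the second kind using any size-$n$ model of $\mathcal{T}$ as a template.

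In detail, I would first invoke the hypothesis to fix some model $\mathfrak{N} \in K$ with $|\mathfrak{N}| = \{b_1, \ldots, b_n\}$. Then I define an $\mathcal{L}$-structure $\mathfrak{S}$ with $|\mathfrak{S}| = [n]$ as follows: for each relation symbol $R$ of arity $m$ and each tuple $\vec a \in [n]^m$, if $\rng \vec a \subsetneq [n]$, pick any $i \in [n] \setminus \rng \vec a$ and declare $\vec a \in R^{\mathfrak{S}}$ iff $\vec a \in R^{\mathfrak{S}_i}$; if $\rng \vec a = [n]$, declare $\vec a \in R^{\mathfrak{S}}$ iff $(b_{a_1}, \ldots, b_{a_m}) \in R^{\mathfrak{N}}$. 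The first clause is well-defined because the compatibility hypothesis $\mathfrak{S}_i|_{[n]\setminus\{i,j\}} = \mathfrak{S}_j|_{[n]\setminus\{i,j\}}$ forces agreement across all valid choices of $i$; this clause also immediately gives $\mathfrak{S}|_{[n]\setminus\{i\}} = \mathfrak{S}_i$ for each $i$.

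The work is then to check that $\mathfrak{S} \in K$, i.e., that $\mathfrak{S}$ satisfies every axiom $\forall x_1 \ldots x_k\, \phi(x_1, \ldots, x_k)$ of $\mathcal{T}$. Fix any assignment $a_1, \ldots, a_k \in [n]$. The parametric assumption tells us that every atomic subformula of $\phi$ involves all $k$ variables, hence every atomic assertion $R(a_{j_1}, \ldots, a_{j_m})$ that arises when evaluating $\phi(a_1, \ldots, a_k)$ has its range of arguments equal to $\{a_1, \ldots, a_k\}$. I would then split into two cases. If $\{a_1, \ldots, a_k\} \subsetneq [n]$, pick some $i$ outside this set; then all the relevant atomic formulas are evaluated on tuples with range in $[n] \setminus \{i\}$, where $\mathfrak{S}$ agrees with $\mathfrak{S}_i$ by construction, and since $\mathfrak{S}_i \models \mathcal{T}$, the axiom holds. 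If instead $\{a_1, \ldots, a_k\} = [n]$, every relevant atomic formula is evaluated on a tuple with range $[n]$, so by construction the truth value of $\phi(a_1, \ldots, a_k)$ in $\mathfrak{S}$ matches that of $\phi(b_{a_1}, \ldots, b_{a_k})$ in $\mathfrak{N}$, which holds because $\mathfrak{N} \models \mathcal{T}$.

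The main obstacle, conceptually, is defining the relations on full-range tuples in a way that cannot violate the axioms; this is exactly what the parametric hypothesis buys us, because it ensures that full-range axioms on $[n]$ are self-contained (they do not interact with the lower-range data), so copying from an arbitrary size-$n$ model of $\mathcal{T}$ suffices. Everything else is bookkeeping, chiefly checking well-definedness of the first clause from the compatibility of the $\mathfrak{S}_i$'s and verifying the restriction identity $\mathfrak{S}|_{[n]\setminus\{i\}} = \mathfrak{S}_i$.
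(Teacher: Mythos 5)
Your proposal is correct and follows essentially the same route as the paper's proof: define the relations on proper-range tuples by copying from the $\mathfrak{S}_i$ (well-defined by pairwise compatibility) and on full-range tuples by copying from an arbitrary size-$n$ model of $\mathcal{T}$, then verify the axioms by the same two-case split using the parametric hypothesis. The only cosmetic difference is that you carry an explicit bijection $a\mapsto b_a$ where the paper assumes without loss of generality that the template has universe $[n]$.
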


\begin{proof}
Consider some $n$ and suppose that for each $i$ we have $\mathfrak{S}_i$ with $|\mathfrak{S}_i|=[n]\setminus\{i\}$ so that $\mathfrak{S}_i|_{[n]\setminus\{i,j\}}=\mathfrak{S}_j|_{[n]\setminus\{i,j\}}$ for $1\leq i,j\leq n$.  We define a structure $\mathfrak{S}$ on $[1,n]$.  For any tuple $\vec x$ such that fewer than $n$ elements appear in $\vec x$, choose $i$ not appearing in $\vec x$ and set $\vec x\in R^{\mathfrak{S}}_k$ if and only if $\vec x\in R^{\mathfrak{S}_i}_k$.  Note that this definition does not depend on our choice of $i$, since if $j$ also does not appear in $\vec x$ then $\rng\vec x\subseteq[n]\setminus\{i,j\}$, so $\vec x\in R^{\mathfrak{S}_i}_k$ if and only if $\vec x\in R^{\mathfrak{S}_j}_k$.

We then choose an arbitrary structure $\mathfrak{T}$ in $K$ of size $n$; without loss of generality, we assume $|\mathfrak{T}|=[n]$.  For each sequence $\vec x$ containing all $n$ elements of $[n]$, we set $\vec x\in R_k^{\mathfrak{S}}$ if and only if $\vec x\in R_k^{\mathfrak{T}}$.

Consider some axiom $\forall x_1,\ldots,x_k\phi(x_1,\ldots,x_k)$ from $\mathcal{T}$.  For any $k$-tuple $\vec x=x_1,\ldots,x_k$, if fewer than $n$ distinct elements appear in $\vec x$ then $\vec x$ is contained in the universe of some $\mathfrak{S}_i$, and the axiom is satisfied because it is satisfied for each $\mathfrak{S}_i$.  If $\vec x$ contains all $n$ elements then every atomic formula in $\phi$ contains all $n$ elements, so the axiom is satisfied because it is satisfied in $\mathfrak{T}$.

\end{proof}

\begin{example}\label{ex:nDAP}
Graphs and hypergraphs are specified by parametric universal theories but equivalence relations are not.
In general, a graph $\mathfrak{M}$ consists of a single binary relation $R^{\mathfrak{M}}$ satisfying the empty theory, which is trivially parametric.
If self-loops are forbidden, then $\mathfrak{M}$ is {\em anti-reflexive}:
\begin{equation}\label{eq:anti-reflexive}\mathfrak{M}\vDash\forall x\,(x,x)\notin R.\end{equation}
An {\em undirected graph} $\mathfrak{M}$ satisfies the further symmetry property
\begin{equation}\label{eq:symmetry}
\mathfrak{M}\vDash \forall x,y\, ((x,y)\in R\rightarrow(y,x)\in R).\end{equation}
Both \eqref{eq:anti-reflexive} and \eqref{eq:symmetry} are parametric universal sentences because \eqref{eq:anti-reflexive} consists of a single atomic sentence and \eqref{eq:symmetry} can be written as
\[\forall x,y\ ((x,y)\in R\wedge (y,x)\in R)\vee((x,y)\notin R\wedge(y,x)\notin R).\]

On the other hand, an equivalence relation $\mathfrak{M}$ is a binary relation $R^{\mathfrak{M}}$ that satisfies the {\em transitivity axiom}
\begin{equation}\label{eq:transitivity}
\mathfrak{M}\vDash\forall x,y,z\ ((x,y)\in R\wedge (y,z)\in R)\to (x,z)\in R,
\end{equation}
which consists of the three atomic sentences
\[(x,y)\in R,\,(y,z)\in R,\,\text{and }(x,z)\in R,\]
none containing all the variables $x,y,z$.
Furthermore, the class of finite equivalence relations does not satisfy $n$-DAP for all $n\geq3$.
Let $K$ be the set of all finite equivalence relations.
Take $n=3$ and define each $\mathfrak{S}_i$ by its equivalence classes $C_1/C_2/\cdots$: $\mathfrak{S}_1=\{2\}/\{3\}$, $\mathfrak{S}_2=\{1,3\}$, and $\mathfrak{S}_3=\{1,2\}$.
Then $\mathfrak{S}_{i}|_{[n]\setminus\{i,j\}}=\mathfrak{S}_j|_{[n]\setminus\{i,j\}}$ for all $i$ and $j$ but there is no equivalence relation $\mathfrak{S}$ of $[n]$ such that $\mathfrak{S}|_{[n]\setminus\{i\}}=\mathfrak{S}_i$ for every $i=1,2,3$.

\end{example}

\section{Summary of Results}\label{section:summary}

Our main theorems generalize Aldous--Hoover and related results to characterize the probability law of random structures $\mathfrak{X}$ that are relatively exchangeable with respect to a structure $\mathfrak{M}$ with trivial definable closure and ultrahomogeneity.
Stronger assumptions about the structure of $\mathfrak{M}$ elicit a stronger representation for $\mathfrak{X}$.

\subsection{The Strongest Representation}

The notion of an exchangeable structure generated by functions generalizes to $\mathfrak{M}$-exchangeable structures.
\begin{definition}
  Let $\mathcal{L}=\{Q_1,\ldots,Q_r\}$ and $\mathcal{L}'=\{R_1,\ldots,R_{r'}\}$ be languages so that each $R_i$ has $\ar(R_i)\leq k$ and let $f_1,\ldots,f_{r'}$ be Borel functions.  The \emph{$\mathfrak{M}$-exchangeable structure generated by} $f_1,\ldots,f_{r'}$ is the structure $\mathfrak{X}^*=(\mathbb{N},R_1^{\mathfrak{X}^*},\ldots,R_{r'}^{\mathfrak{X}^*})$ given by choosing $(\xi_s)_{s\subseteq\mathbb{N}:\,|s|\leq k}$ i.i.d.\ Uniform$[0,1]$ and $(\prec_s)_{s\subseteq\mathbb{N}:\,|s|\leq k}$ independent uniform random orderings and putting
\[\vec x\in R_i^{\mathfrak{X}^*}\quad\Longleftrightarrow\quad f_i(\mathfrak{M}|_{\rng\vec x},(\xi_{s})_{s\subseteq\rng\vec x},(\prec_{\vec y})_{\vec y\subseteq \vec x})=1.\]
\end{definition}

We obtain the following generalization of the de Finetti--Aldous--Hoover theorem to arbitrary relatively exchangeable structures.
\begin{theorem}\label{thm:nice1}
Let $\mathcal{L},\mathcal{L}'$ be signatures and $\mathfrak{M}$ be a countable $\mathcal{L}$-structure that is ultrahomogeneous and has $n$-DAP for all $n\geq1$.  Without loss of generality, assume $|\mathfrak{M}|=\Nb$.  

Let $\mathcal{L}'=\{R_1,\ldots,R_{r'}\}$ have $\ar(R_i)\leq k$ for all $1\leq i\leq r'$.  Suppose $\mathfrak{X}$ is a random $\mathcal{L}'$-structure that is relatively exchangeable with respect to $\mathfrak{M}$.
Then there exist Borel functions $f_1,\ldots,f_{r'}$ such that $\mathfrak{X}\equalinlaw \mathfrak{X}^*$, where $\mathfrak{X}^*$ is the $\mathfrak{M}$-exchangeable structure generated by $f_1,\ldots,f_{r'}$.
\end{theorem}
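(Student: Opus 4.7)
The plan is to bootstrap from the unrestricted Aldous--Hoover theorem (Theorem~\ref{AH-general}) applied to a cleverly coupled pair $(\tilde{\mathfrak{M}}, \tilde{\mathfrak{X}})$, and then invoke $n$-DAP to push the generic representation toward a purely local one. First I would use Theorem~\ref{AFP_theorem} (whose hypotheses hold since $2$-DAP is a consequence of $n$-DAP for all $n$, and $\mathfrak{M}$ is ultrahomogeneous) to obtain a dissociated, exchangeable random $\mathcal{L}$-structure $\tilde{\mathfrak{M}}$ on $\mathbb{N}$ that is isomorphic to $\mathfrak{M}$ almost surely and which admits the random-free form \eqref{eq:random-free} generated by i.i.d.\ uniforms $(\eta_{\{x\}})_{x\in\mathbb{N}}$. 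Next I would couple $\tilde{\mathfrak{M}}$ with a random $\mathcal{L}'$-structure $\tilde{\mathfrak{X}}$ by sampling, conditionally on $\tilde{\mathfrak{M}} = \mathfrak{M}'$, the structure $\tilde{\mathfrak{X}}$ as $\mathfrak{X}^{\tau}$ for any isomorphism $\tau:\mathfrak{M}' \to \mathfrak{M}$. Relative exchangeability combined with Proposition~\ref{prop:equiv} (via ultrahomogeneity) ensures this conditional distribution is independent of the choice of $\tau$, and a direct check then shows that $(\tilde{\mathfrak{M}}, \tilde{\mathfrak{X}})$ is jointly exchangeable as an $(\mathcal{L}\cup\mathcal{L}')$-structure whose $\tilde{\mathfrak{X}}$-marginal reproduces the law of $\mathfrak{X}$ up to an isomorphism.

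Second, I would apply Theorem~\ref{AH-general} to the joint structure $(\tilde{\mathfrak{M}}, \tilde{\mathfrak{X}})$ to produce Borel functions $g_1,\ldots,g_r,h_1,\ldots,h_{r'}$ that generate an equidistributed structure on data $(\eta_s)_{s\subseteq\mathbb{N}}$ and orderings $(\prec_s)$. Using the AFP representation for $\tilde{\mathfrak{M}}$, I would arrange that each $g_i$ depends only on the unary data $(\eta_{\{x\}})_{x\in\rng\vec x}$, so that $\tilde{\mathfrak{M}}|_{\rng\vec x}$ is a Borel function of $(\eta_{\{x\}})_{x\in\rng\vec x}$, while each $h_i$ satisfies
\[\vec x \in R_i^{\tilde{\mathfrak{X}}}\quad \Longleftrightarrow \quad h_i((\eta_s)_{s\subseteq\rng\vec x},(\prec_{\vec y})_{\vec y\subseteq\vec x})=1.\]

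The crux of the argument, where $n$-DAP enters decisively, is to show that $h_i$ depends on the unary data $(\eta_{\{x\}})_{x\in\rng\vec x}$ only through the induced substructure $\tilde{\mathfrak{M}}|_{\rng\vec x}$. Concretely, one must verify that whenever $(\eta_{\{x\}})_{x\in\rng\vec x}$ and $(\eta'_{\{x\}})_{x\in\rng\vec x}$ are two unary assignments inducing the same $\tilde{\mathfrak{M}}|_{\rng\vec x}$, the conditional distribution of $h_i$'s output over the higher-order data $(\eta_s)_{|s|\geq 2}$ and orderings coincide. Here $n$-DAP is essential: it guarantees that any finite substructure on $\rng\vec x$ can be extended to arbitrary globally consistent structures of $\mathfrak{M}$, so the full law of $\mathfrak{M}$ is not further constrained by the local isomorphism type. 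By relative exchangeability, the ``extra'' randomness in $\eta_{\{x\}}$ beyond what determines the local type cannot correlate with $\tilde{\mathfrak{X}}$'s behavior on $\vec x$. Having established the reduction, I would repackage the surplus randomness in the unary $\eta$'s into fresh i.i.d.\ uniforms $\xi_{\{x\}}$ (via a measurable disintegration), set $\xi_s := \eta_s$ for $|s|\geq 2$, and define $f_i(\mathfrak{M}|_{\rng\vec x},(\xi_s),(\prec_{\vec y}))$ accordingly.

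The main obstacle is precisely this last localization step. A straightforward Aldous--Hoover application already localizes the relevant data for $\tilde{\mathfrak{X}}$ to $\rng\vec x$, but shrinking the dependence on $\tilde{\mathfrak{M}}$ from ``the entire unary generating data'' down to ``the induced substructure'' requires careful use of $n$-DAP to rule out hidden dependence on the ambient structure mediated through a correlation between $\eta_{\{x\}}$'s and global $\mathfrak{M}$ features. Once this localization is secured, the final conclusion $\mathfrak{X}\equalinlaw\mathfrak{X}^*$ is immediate because $\tilde{\mathfrak{M}}\cong\mathfrak{M}$ almost surely and the $f_i$'s are well-defined on $\age(\mathfrak{M})$.
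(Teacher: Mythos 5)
Your skeleton matches the paper's: represent $\mathfrak{M}$ by an exchangeable structure isomorphic to it almost surely, couple with $\mathfrak{X}$ to get a jointly exchangeable $\mathcal{L}\cup\mathcal{L}'$-structure, apply Theorem \ref{AH-general}, and then localize. But the step you yourself identify as the crux is where the proposal has a genuine gap, and the gap is created by your choice of starting representation. You take the random-free form \eqref{eq:random-free} from Theorem \ref{AFP_theorem}, in which all of $\mathfrak{M}$ is coded into the unary variables $\eta_{\{x\}}$. For such a representation the fiber of unary assignments $(\eta_{\{x\}})_{x\in\rng\vec x}$ inducing a given $\mathfrak{S}$ is in general \emph{not} a product set (already for the random graph, $\{(\eta_x,\eta_y): f(\eta_x,\eta_y)=1\}$ is not a rectangle), so the final move --- ``repackage the surplus randomness in the unary $\eta$'s into fresh i.i.d.\ uniforms $\xi_{\{x\}}$ via a measurable disintegration'' --- does not produce independent coordinates that work simultaneously and consistently for every tuple $\vec x$. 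Moreover, the key claim that ``the extra randomness in $\eta_{\{x\}}$ beyond what determines the local type cannot correlate with $\tilde{\mathfrak{X}}$'s behavior on $\vec x$'' is asserted rather than proved. Relative exchangeability gives you that the conditional law of $\mathfrak{X}^*|_S$ given the \emph{observable} structure $\mathfrak{Z}^*$ depends only on $\mathfrak{Z}^*|_S$; it says nothing directly about the \emph{latent} variables of a particular Aldous--Hoover representation, and conditioning on all of $\mathfrak{Z}^*$ reveals information about $(\eta_{\{x\}})_{x\in S}$ (how the points of $S$ attach to the rest of the model) that is strictly finer than $\mathfrak{Z}^*|_S$ but need not determine $(\eta_{\{x\}})_{x\in S}$. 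Closing this gap is the actual content of the theorem; the paper's counterexamples (e.g.\ Example \ref{example:weak-rep}) show that exactly this kind of hidden correlation through the unary data is what goes wrong when $n$-DAP fails.

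The paper's route around this is different in a way that matters: instead of the random-free representation, it uses $n$-DAP to build (Lemma \ref{thm:nice_rep}, the frame-wise uniform measure) a representation of $\mathfrak{M}$ that \emph{factors through substructures} in the sense of Definition \ref{defn:factors}, then transfers the joint Aldous--Hoover representation onto it by a measure-preserving transformation (the Claim in the proof). The payoff is that the fiber $\Theta(\mathfrak{S},(\sqsubset_s))$ of generating data producing $\mathfrak{S}$ becomes a cube $\prod_{s\subseteq S}\Theta_s(\mathfrak{S}|_s,\sqsubset_s)$, so one can define level-wise measure-preserving maps $\theta_s^{\mathfrak{S}|_s,(\sqsubset_t)}:[0,1]\to\Theta_s(\mathfrak{S}|_s,\sqsubset_s)$ and set $f_j(\mathfrak{S},\ldots)=h_j((\theta_s(\xi_s))_s,\ldots)$; the independence and the correct conditional law then come for free from the product structure. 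If you want to salvage your version, you would need either to prove the conditional-independence claim for the random-free representation directly (which amounts to machinery like Theorem \ref{thm:subalgebra}, used in the paper only for the weaker Theorem \ref{thm:main1}), or to replace the random-free representation by one that factors through substructures --- which is precisely where $n$-DAP does its work in the paper.
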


\begin{lemma}\label{thm:nice_dissociated}
  If, in the situation of Theorem \ref{thm:nice1}, $\mathfrak{X}$ is also dissociated, then $f_1,\ldots,f_{r'}$ can be chosen so that the $f_i$ do not depend on $\xi_\emptyset$.
\end{lemma}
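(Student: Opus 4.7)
The plan is to exploit that, in the representation provided by Theorem~\ref{thm:nice1}, the parameter $\xi_\emptyset$ is the only random input shared across disjoint cells of $\mathfrak{X}^*$, so fixing $\xi_\emptyset=\alpha$ already yields a dissociated $\mathfrak{M}$-exchangeable structure. This will present the law of $\mathfrak{X}$ as a mixture over $\alpha$ of dissociated $\mathfrak{M}$-exchangeable laws, and the lemma will reduce to showing that a dissociated law cannot be a nontrivial such mixture.

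First I apply Theorem~\ref{thm:nice1} to obtain Borel functions $f_1,\ldots,f_{r'}$ with $\mathfrak{X}\equalinlaw\mathfrak{X}^*$, and let $\mu_\alpha$ denote the conditional law of $\mathfrak{X}^*$ given $\xi_\emptyset=\alpha$. Each $\mu_\alpha$ is $\mathfrak{M}$-exchangeable by the same generating process and dissociated because no random input other than $\xi_\emptyset$ is shared between disjoint cells. It will then remain to find $\alpha^*\in[0,1]$ with $\mu_{\alpha^*}$ equal to the law of $\mathfrak{X}$; once such $\alpha^*$ is produced, defining $\tilde f_i$ to be $f_i$ with its $\xi_\emptyset$-slot replaced by the constant $\alpha^*$ will yield the desired functions.

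To produce $\alpha^*$, I fix a finite $A\subseteq\Nb$ and a structure $\mathfrak{S}\in\mathcal{L}'_A$, and set $p_{A,\mathfrak{S}}(\alpha):=\mu_\alpha(\mathfrak{X}^*|_A=\mathfrak{S})$. Using $n$-DAP for all $n$ together with ultrahomogeneity, I construct pairwise disjoint $A_1,A_2,\ldots\subseteq\Nb$ equipped with isomorphisms $\varphi_i:\mathfrak{M}|_A\to\mathfrak{M}|_{A_i}$ such that, for every $n$ and all $i_1<\cdots<i_n$, the joint substructure $\mathfrak{M}|_{A_{i_1}\cup\cdots\cup A_{i_n}}$ has a prescribed isomorphism type depending only on $n$. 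Writing $\mathfrak{S}_i:=\varphi_i(\mathfrak{S})$, relative exchangeability of $\mathfrak{X}$ makes the events $E_i:=\{\mathfrak{X}|_{A_i}=\mathfrak{S}_i\}$ identically distributed, and dissociation of $\mathfrak{X}$ makes them independent, so the strong law of large numbers yields $n^{-1}\sum_{i=1}^n\mathbf{1}_{E_i}\to\mathbb{P}(\mathfrak{X}|_A=\mathfrak{S})$ almost surely. Applying the same reasoning conditionally on $\xi_\emptyset$, which is legitimate because $\mu_\alpha$ is dissociated and $\mathfrak{M}$-exchangeable, gives $n^{-1}\sum_{i=1}^n\mathbf{1}_{E_i}\to p_{A,\mathfrak{S}}(\xi_\emptyset)$ almost surely, so $p_{A,\mathfrak{S}}$ must be almost surely the constant $\mathbb{P}(\mathfrak{X}|_A=\mathfrak{S})$. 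Intersecting the resulting null sets over the countably many pairs $(A,\mathfrak{S})$ produces a single $\alpha^*$ for which $\mu_{\alpha^*}$ agrees with the law of $\mathfrak{X}$ on every finite substructure, hence on all of $\lN$.

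The main obstacle will be the construction of the sequence $(A_i)_{i\geq 1}$ with a uniform joint isomorphism type in $\mathfrak{M}$, which is exactly where $n$-DAP for all $n$ enters. Given $A_1,\ldots,A_n$, one must amalgamate a fresh disjoint copy of $\mathfrak{M}|_A$ onto the existing configuration in a pre-chosen ``generic'' way, using $n$-DAP to guarantee that the required pairwise joint types can be consistently realized and ultrahomogeneity of $\mathfrak{M}$ to relocate the amalgamated configuration back inside $\mathfrak{M}$ itself. I expect this inductive construction, together with a standard diagonal argument handling all $(A,\mathfrak{S})$ at once, to be the technically delicate step; the subsequent law-of-large-numbers argument and the substitution $\xi_\emptyset\mapsto\alpha^*$ are essentially routine.
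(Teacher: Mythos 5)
Your proposal is correct and shares the paper's overall strategy: fix $\xi_\emptyset=\alpha$, observe that each slice law $\mu_\alpha$ is $\mathfrak{M}$-exchangeable and dissociated, and show that almost every slice already equals the law of $\mathfrak{X}$, so that $\xi_\emptyset$ can be replaced by a constant. Where you diverge is in how you prove that $\alpha\mapsto\mathbb{P}(\mathfrak{X}^{\alpha}|_A=\mathfrak{S})$ is almost surely constant. You build infinitely many pairwise disjoint copies $A_1,A_2,\ldots$ of $\mathfrak{M}|_A$ and run the strong law of large numbers, both unconditionally and conditionally on $\xi_\emptyset$. The paper instead uses only \emph{two} disjoint isomorphic copies $S$ and $T$ (obtained from ultrahomogeneity) and a second-moment argument: if the conditional probability were genuinely random, the events $\{\mathfrak{X}^{**}|_S=\mathfrak{S}\}$ and $\{\mathfrak{X}^{**}|_T=\mathfrak{S}\}$ would be positively correlated through their shared dependence on $\xi_\emptyset$, contradicting dissociation. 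Your route is sound but heavier than necessary in two respects: the requirement that the sets $A_{i_1}\cup\cdots\cup A_{i_n}$ have a \emph{uniform joint isomorphism type} is not needed (the SLLN only requires independence of the $E_i$, which dissociation gives from disjointness alone, together with identical marginals, which relative exchangeability gives from $\mathfrak{M}|_{A_i}\cong\mathfrak{M}|_A$), so the step you flag as technically delicate can be reduced to iterated disjoint amalgamation over the empty structure, needing only DAP rather than $n$-DAP for all $n$; and the two-copy correlation argument avoids the limit theorem entirely. What your approach buys in exchange is slightly more information: it exhibits the law of $\mathfrak{X}$ directly as the a.s.\ value of an empirical average, which makes the ``extreme point'' role of dissociated laws explicit.
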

We will prove these in Section \ref{sec:proof_nice}.

\begin{remark}
 If $\mathcal{L}$ is the empty language and $\mathcal{L}'$ consists of a single $k$-ary relation, then Theorem \ref{thm:nice1} specializes to de Finetti's theorem \cite{deFinetti1937} (when $k=1$) and the Aldous--Hoover theorem \cite{Aldous1981,Hoover1979} (when $k>1$).
\end{remark}

\begin{remark}
  By the same argument, the analogous statement would hold if we replace $\mathfrak{X}$ with a Borel-valued structure---that is, if each $R_i^{\mathfrak{X}}$ is a function from $\mathbb{N}^{k_i}$ to some Borel space $\Omega_i$.  The analogous statement would then give a representation where $f_1,\ldots,f_{r'}$ are Borel-measurable functions so 
\[R_i^{\mathfrak{X}}(\vec x)=f_i(\mathfrak{M}|_{\rng\vec x},(\xi_s)_{s\subseteq\rng\vec x},(\prec_{\vec y})_{\vec y\subseteq\vec x}).\]
The case where we consider structures is precisely the case where each $\Omega_i=\{0,1\}$.
\end{remark}

The main outcome of Theorem \ref{thm:nice1} is that when $\mathfrak{M}$ is ultrahomogeneous and has $n$-DAP for all $n\geq1$, an $\mathfrak{M}$-exchangeable structure $\mathfrak{X}$ can be generated so that, for every subset $S\subseteq\Nb$, $\mathfrak{X}|_{S}$ depends only on the smallest non-trivial substructure of $\mathfrak{M}$, namely $\mathfrak{M}|_{S}$.
Some examples show how the representation depends on these assumptions.
Theorem \ref{thm:main1} covers the case where $n$-DAP for all $n\geq1$ is relaxed to 2-DAP.

\begin{example}\label{example:strong-rep}
Suppose $\mathfrak{M}$ and $\mathfrak{X}$ are both subsets of $\Nb$---that is, $\mathcal{L}$ and $\mathcal{L}'$ each contain a single unary relation---and $\mathfrak{M}=(\Nb,\mathcal{P})$ is a model with $\mathcal{P}\subseteq\Nb$ infinite and coinfinite.
If $S,T$ are finite subsets of $\Nb$, an embedding $\phi:S\rightarrow T$ that preserves $\mathfrak{M}$ must map $S\cap \mathcal{P}$ to $T\cap\mathcal{P}$ and $S\setminus\mathcal{P}$ to $T\setminus\mathcal{P}$.  
Thus, an $\mathfrak{M}$-exchangeable $\mathfrak{X}$ can be viewed as two separate exchangeable structures---one on $\mathcal{P}$ and one on $\Nb\setminus\mathcal{P}$.

Theorem \ref{thm:nice1} says that $\mathfrak{X}=(\Nb,\mathcal{X})$ can be represented by
\begin{equation}\label{eq:2-coloring}
n\in\mathcal{X}\quad\Longleftrightarrow\quad f(\mathfrak{M}|_{\{n\}},\xi_\emptyset, \xi_{\{n\}})=1,\end{equation}
for i.i.d.\ Uniform$[0,1]$ random variables $\{\xi_{\emptyset};(\xi_{\{n\}})_{n\geq1}\}$.
Thus, the event $n\in\mathcal{X}$ depends on three things: a global random variable $\xi_\emptyset$, a random variable specific to $n$, and whether or not $n\in\mathcal{P}$.

The natural way to interpret \eqref{eq:2-coloring} is that we have a probability measure $\Theta$ on $[0,1]^2$ from which we choose $(\theta_0,\theta_1)$.  
Given $(\theta_0,\theta_1)$, we determine $\mathcal{X}$ by independently flipping a coin for each $n$: if $n\in\mathcal{P}$, we flip a coin with probability $\theta_1$ of landing heads; otherwise, we flip a coin with probability $\theta_0$ of landing heads.  The random variable $\xi_\emptyset$ corresponds to the choice of $(\theta_0,\theta_1)$, $\mathfrak{M}|_{\{n\}}$ determines which coin to flip for each $n$, and $\xi_{\{n\}}$ determines the outcome of the coin flip associated to $n$.
The representation of this special case has been shown before by one of the authors \cite{Crane2014AOP}.

Note that $\mathfrak{X}$ is not (necessarily) dissociated---$n\in\mathcal{X}$ and $n'\in\mathcal{X}$ are not independent since both depend on the same random choice of $(\theta_0,\theta_1)$---and so $f$ depends non-trivially on $\xi_\emptyset$.
\end{example}

\begin{example}\label{example:graph_on_graph}
Suppose $\mathcal{L}$ and $\mathcal{L}'$ each contain a single binary relation and $\mathfrak{M}=(\Nb,\mathcal{R})$ is a copy of the random graph (i.e., the unique up to isomorphism universal ultrahomogeneous countable graph).  If $\mathfrak{X}=(\Nb,\mathcal{X})$ is $\mathfrak{M}$-exchangeable, Theorem \ref{thm:nice1} gives a representation
\[(n,m)\in\mathcal{X}\quad\Longleftrightarrow\quad f(\mathfrak{M}|_{\{n,m\}},\xi_\emptyset,\xi_{\{n\}},\xi_{\{m\}},\xi_{\{n,m\}},\prec_{(n,m)})=1.\]
\end{example}

The next two examples fail $n$-DAP and illustrate why we cannot drop that requirement from the statement of the theorem.
\begin{example}\label{example:weak-rep}
Let $\mathcal{L}$ contain a single $3$-ary relation and $\mathcal{L}'=\{S\}$ contain a single binary relation.
Let $\mathfrak{M}=(\Nb,\mathcal{R})$ be an $\mathcal{L}$-structure such that for each $i$, $\mathcal{R}(i)=\{(j,k)\mid (i,j,k)\in\mathcal{R}\}$ is an equivalence relation with exactly three equivalence classes, two infinite and the third consisting only of $i$.  We generate a random $\mathcal{L}'$-structure as follows.  For each $i\in\Nb$, we pick one of the two non-singleton equivalence classes of $\mathcal{R}(i)$ uniformly at random; let $\mathcal{B}^{*i}\subseteq\Nb$ be this equivalence class.  
We then put
\[(i,j)\in S^{\mathfrak{X}}\quad\Longleftrightarrow\quad j\in\mathcal{B}^{*i}.\]

By construction, $\mathfrak{X}$ is $\mathfrak{M}$-exchangeable and dissociated.  However, suppose we could find a representation
\begin{equation}\label{eq:nice-false}
(i,j)\in S^{\mathfrak{X}}\quad\Longleftrightarrow\quad f(\mathfrak{M}|_{\{i,j\}},\xi_{\{i\}},\xi_{\{j\}},\xi_{\{i,j\}},\prec_{(i,j)})=1.\end{equation}
Take a triple not in $\mathcal{R}$; without loss of generality suppose $(1,2,3)\not\in\mathcal{R}$, so that $2$ and $3$ are in different equivalence classes of $\mathcal{R}(1)$.  Then we must have exactly one of $(1,2)$ and $(1,3)$ in $S^{\mathfrak{X}}$.  With probability $1/2$, $\prec_{(1,2)}=\prec_{(1,3)}$.  But since $\mathfrak{M}|_{\{1,2\}}=\mathfrak{M}|_{\{1,3\}}$, so representation \eqref{eq:nice-false} implies $(1,2)\in S^{\mathfrak{X}}$ and $(1,3)\in S^{\mathfrak{X}}$ are conditionally independent given $\xi_{\{1\}}$ and $\prec_{(1,2)}=\prec_{(1,3)}$.
In particular, if there is a non-zero probability that $(1,2)\in S^{\mathfrak{X}}$ then there is a non-zero probability that both $(1,2)$ and $(1,3)$ are in $S^{\mathfrak{X}}$.

Notice that $\mathfrak{M}$ does not have $3$-DAP: suppose we try to build a structure containing four elements $\{1,2,3,4\}$ so $(1,2,3),(1,2,4)\in \mathcal{R}$ but $(1,3,4)\not\in\mathcal{R}$.  The restriction to each three element subset gives an element of $\age(\mathfrak{M})$, but they are incompatible as a four element subset.
\end{example}


The next example fails $n$-DAP despite having no definable equivalence relations\footnote{The underlying model-theoretic example is a structure without $n$-DAP which is a reduct of a structure with $n$-DAP.  We thank A.\ Kruckman for calling our attention to this example from MacPherson \cite{Macpherson2010}.}:
\begin{example}
  Let $\mathcal{L}_0$ consist of a single binary relation $R_0$.  Let $\mathfrak{M}_0$ be the $\mathcal{L}_0$-structure which interprets $R_0$ as the random graph.  Let $\mathcal{L}$ consist of a single $3$-ary relation $R$, and let $R^{\mathfrak{M}}$ consist of those triples $(x,y,z)$ of distinct elements such that $|R_0^{\mathfrak{M}_0}\cap[\{x,y,z\}]^2|$ is odd.  $\mathfrak{M}$ is clearly an undirected hypergraph, and it can be checked that it is universal subject to the constraint that when $\{w,x,y,z\}$ are distinct, $|R^{\mathfrak{M}}\cap[\{w,x,y,z\}]^3|$ is even.  In particular, $\mathfrak{M}$ is ultrahomogeneous, but fails to have $4$-DAP.

Let $\mathcal{L}'$ consist of a single binary relation $S$.  We begin by defining an $\mathfrak{M}_0$-exchangeable $\mathcal{L}'$-structure $\mathfrak{X}$: for each vertex $x$, we choose $\xi_x\in\{0,1\}$ i.i.d.  We define $(x,y)\in S^{\mathfrak{X}}$ if either $\xi_x\neq\xi_y$ and $(x,y)\in R_0^{\mathfrak{M}_0}$ or $\xi_x=\xi_y$ and $(x,y)\not\in R_0^{\mathfrak{M}_0}$.  Notice that $\mathfrak{X}$ is dissociated.

Then $\mathfrak{X}$ is actually $\mathfrak{M}$-exchangeable.  To see this, suppose $\phi:\mathfrak{M}|_S\rightarrow\mathfrak{M}|_T$ is an isomorphism.  Then $R_0^{\mathfrak{M}}$ and $\phi(R_0^{\mathfrak{M}}\cap S^2)$ induce two graphs on $T$; write $E$ for the symmetric difference---that is, $E$ is those edges $(x,y)\in [T]^2$ such that either $(x,y)\in R_0^{\mathfrak{M}}$ but $(\phi^{-1}(x),\phi^{-1}(y))\not\in R_0^{\mathfrak{M}}$, or vice versa.  Since $\mathfrak{M}|_S$ and $\mathfrak{M}|_T$ are isomorphic, for every triple $\{x,y,z\}\subseteq T$ of distinct elements, $|E\cap[\{x,y,z\}]^2|$ must be even.  Choose any vertex $v\in T$ and let $V$ be the set containing every vertex which is \emph{not} adjacent to $v$ in $E$ (including $v$).  Then $V$ intersects every edge in $E$ exactly once: if $(x,y)\in E$ then either exactly one of these vertices is $v$, or the triple $\{v,x,y\}$ has an even number of edges, so $(x,y)$ is one and either $(v,x)$ or $(v,y)$ is the other, so exactly one of $x$ and $y$ belongs to $V$.  But now we see that for any choice of values $\xi_x$ giving us a structure $\mathfrak{X}|_S$, by flipping the values on those $x\in V$, we get the same structure on $\mathfrak{X}|_T$.  This shows that $\mathfrak{X}$ is $\mathfrak{M}$-exchangeable.

But suppose we could represent $\mathfrak{X}$ in the form
\[(i,j)\in S^{\mathfrak{X}}\quad\Longleftrightarrow\quad f(\mathfrak{M}|_{\{i,j\}},\xi_{\{i\}},\xi_{\{j\}},\xi_{\{i,j\}},\prec_{(i,j)})=1.\]
Since $\mathfrak{M}$ restricted to a pair is trivial, this really has the form
\[(i,j)\in S^{\mathfrak{X}}\quad\Longleftrightarrow\quad f(\xi_{\{i\}},\xi_{\{j\}},\xi_{\{i,j\}},\prec_{(i,j)})=1.\]
Then $\mathfrak{X}$ must be fully exchangeable.  But this is not the case; for instance, if $(x,y,z)\in R^{\mathfrak{M}}$ then $|S^{\mathfrak{X}}\cap[\{x,y,z\}]^2|$ is even while if $(x,y,z)\not\in R^{\mathfrak{M}}$ then $|S^{\mathfrak{X}}\cap[\{x,y,z\}]^2|$ is odd (consider the four possible values of $\xi_x+\xi_y+\xi_z$ by cases).
\end{example}

\subsection{Age Indexed Processes}\label{section:canonical structures}

Part of our motivation for considering ultrahomogeneous $\mathfrak{M}$ with disjoint amalgamation is that these structures have a nice universality property: if $\mathfrak{N}$ is a countable structure with $\age(\mathfrak{N})\subseteq\age(\mathfrak{M})$ then there is  an embedding of $\mathfrak{N}$ into $\mathfrak{M}$.

For any $\mathfrak{S}\in\age(\mathfrak{M})$ with $|\mathfrak{S}|=[n]$, there is a natural embedding $\rho_{\mathfrak{S},\mathfrak{M}}:\mathfrak{S}\rightarrow\mathfrak{M}$ obtained by successively choosing $\rho_{\mathfrak{S},\mathfrak{M}}(i)=m_i$, where $m_i$ is least so that $\rho_{\mathfrak{S},\mathfrak{M}}\upharpoonright [i]$ is an embedding $\mathfrak{S}|_{[i]}\rightarrow\mathfrak{M}$ for every $1\leq i\leq n$.  (Here we use the fact that we have defined $\age(\mathfrak{M})$ to contain only structures with universe $[n]$ for some $n$.)

If $\mathfrak{X}$ is $\mathfrak{M}$-exchangeable, it induces a family of finite random structures as follows.

\begin{definition}[Age indexed random structures]\label{defn:canonical structure}
Let $\mathcal{L},\mathcal{L}'$ be signatures, $\mathfrak{M}\in\lN$ be ultrahomogeneous and satisfy DAP, and $\mathfrak{S}\in\age(\mathfrak{M})$.  Suppose that for each $\mathfrak{S}\in\age(\mathfrak{M})$ we have a random $\mathcal{L}'$-structure on $|\mathfrak{S}|$ such that whenever $\phi:\mathfrak{S}\rightarrow\mathfrak{T}$ is an embedding, $\mathfrak{X}^{\mathfrak{S}}\equalinlaw(\mathfrak{X}^{\mathfrak{T}})^\phi$.  Then we call $\{\mathfrak{X}^{\mathfrak{S}}\}_{\mathfrak{S}\in\age(\mathfrak{M})}$ an \emph{age indexed random $\mathcal{L}'$-structure}.

When $\mathfrak{X}$ is an $\mathfrak{M}$-exchangeable $\mathcal{L}'$-structure, we define a random $\mathcal{L}'$-structure $\mathfrak{X}^{\mathfrak{S}}=\mathfrak{X}^{\rho_{\mathfrak{S},\mathfrak{M}}}$, where $\rho_{\mathfrak{S},\mathfrak{M}}:\mathfrak{S}\rightarrow\mathfrak{M}$ is the natural embedding defined above.  That is, each $\mathfrak{X}^{\mathfrak{S}}$ is the finite random $\mathcal{L}'$-structure induced by the image of $\mathfrak{S}$ in $\mathfrak{M}$.
\end{definition}
In light of the following proposition, we call $\{\mathfrak{X}^{\mathfrak{S}}\}_{\mathfrak{S}\in\age(\mathfrak{M})}$ the \emph{age indexed random $\mathcal{L}'$-structure induced by} $\mathfrak{X}$.

\begin{prop}
Let $\mathfrak{X}$ be $\mathfrak{M}$-exchangeable, ${\mathfrak{S}},\mathfrak{T}\in\age(\mathfrak{M})$, and $\phi:\mathfrak{S}\rightarrow\mathfrak{T}$ be any embedding.
Then 
\[\mathfrak{X}^{{\mathfrak{S}}}\equalinlaw (\mathfrak{X}^{{\mathfrak{T}}})^{\phi}.\]
\end{prop}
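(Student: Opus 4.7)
The plan is to reduce the claim to a direct application of Definition \ref{defn:relative exchangeability}, using the elementary functoriality $(\mathfrak{Y}^{\psi})^{\phi} = \mathfrak{Y}^{\psi\circ\phi}$ for composable injections $\phi,\psi$. First, this identity lets me rewrite the right-hand side of the target as $(\mathfrak{X}^{\rho_{\mathfrak{T},\mathfrak{M}}})^{\phi} = \mathfrak{X}^{\rho_{\mathfrak{T},\mathfrak{M}}\circ\phi}$. Setting $\psi_1 := \rho_{\mathfrak{S},\mathfrak{M}}$ and $\psi_2 := \rho_{\mathfrak{T},\mathfrak{M}}\circ\phi$, both are embeddings $\mathfrak{S} \to \mathfrak{M}$, and it suffices to show $\mathfrak{X}^{\psi_1} \equalinlaw \mathfrak{X}^{\psi_2}$; in other words, the distribution of $\mathfrak{X}^{\psi}$ should depend only on the domain $\mathfrak{S}$, not on the particular embedding $\psi$.

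Next, I would introduce $S_i := \psi_i(|\mathfrak{S}|) \subseteq |\mathfrak{M}|$ and view each $\psi_i$ as an isomorphism $\mathfrak{S} \to \mathfrak{M}|_{S_i}$. The composite $\alpha := \psi_1 \circ \psi_2^{-1} : S_2 \to S_1$ is then an isomorphism $\mathfrak{M}|_{S_2} \to \mathfrak{M}|_{S_1}$, hence an embedding $\mathfrak{M}|_{S_2} \to \mathfrak{M}$. Applying Definition \ref{defn:relative exchangeability} to $\alpha$ gives $\mathfrak{X}|_{S_1}^{\alpha} \equalinlaw \mathfrak{X}|_{S_2}$. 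Pulling back both sides along the bijection $\psi_2 : |\mathfrak{S}| \to S_2$ preserves equality in distribution since pullback is a deterministic operation; by functoriality, the left side becomes $\mathfrak{X}|_{S_1}^{\alpha\circ\psi_2} = \mathfrak{X}|_{S_1}^{\psi_1} = \mathfrak{X}^{\psi_1}$ (the last equality holds because $\psi_1(|\mathfrak{S}|) = S_1$, so the restriction to $S_1$ is a no-op under the pullback), while the right side becomes $\mathfrak{X}|_{S_2}^{\psi_2} = \mathfrak{X}^{\psi_2}$. Combining this with the opening reduction yields $\mathfrak{X}^{\mathfrak{S}} = \mathfrak{X}^{\psi_1} \equalinlaw \mathfrak{X}^{\psi_2} = (\mathfrak{X}^{\mathfrak{T}})^{\phi}$.

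There is no substantive obstacle, essentially because the content of the proposition is that the distinguished embedding $\rho_{\mathfrak{S},\mathfrak{M}}$ was, as far as the law of $\mathfrak{X}^{\rho_{\mathfrak{S},\mathfrak{M}}}$ is concerned, an arbitrary choice --- and relative exchangeability is exactly the statement that guarantees this invariance. The only mild pitfall is keeping the direction of composition in the pullback notation $\mathfrak{Y} \mapsto \mathfrak{Y}^{\phi}$ straight; once that is fixed, every step above is a routine unwinding of definitions. Note also that ultrahomogeneity and DAP are not invoked explicitly in the argument --- they are needed only to make the canonical embeddings $\rho_{\mathfrak{S},\mathfrak{M}}$ well-defined in the first place, as stipulated in Definition \ref{defn:canonical structure}.
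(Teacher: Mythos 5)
Your proposal is correct and follows essentially the same route as the paper: both arguments reduce the claim to the observation that $\rho_{\mathfrak{T},\mathfrak{M}}\circ\phi$ and $\rho_{\mathfrak{S},\mathfrak{M}}$ are two embeddings of $\mathfrak{S}$ into $\mathfrak{M}$, and that relative exchangeability (Definition \ref{defn:relative exchangeability}) forces the pullbacks of $\mathfrak{X}$ along any two such embeddings to agree in distribution. Your write-up merely makes explicit, via the isomorphism $\alpha=\psi_1\circ\psi_2^{-1}$ and the deterministic pullback along $\psi_2$, the step that the paper compresses into the single chain $\mathfrak{X}^{\mathfrak{S}}\equalinlaw\mathfrak{X}|_S\equalinlaw\mathfrak{X}^{\rho_{\mathfrak{T},\mathfrak{M}}\circ\phi}\equalinlaw(\mathfrak{X}^{\mathfrak{T}})^{\phi}$.
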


\begin{proof} 
Let $\mathfrak{S},\mathfrak{T}\in\age(\mathfrak{M})$, $\rho_{{\mathfrak{S}},\mathfrak{M}},\rho_{{\mathfrak{T}},\mathfrak{M}}$ be the natural embeddings defined above, and assume $\phi:\mathfrak{S}\rightarrow\mathfrak{T}$ is an embedding.
Then $\rho_{{\mathfrak{T}},\mathfrak{M}}\circ\phi:\mathfrak{S}\rightarrow\mathfrak{M}$ is also an embedding and 
\[\mathfrak{X}^{{\mathfrak{S}}}\equalinlaw \mathfrak{X}|_S\equalinlaw  \mathfrak{X}^{\rho_{{\mathfrak{T}},\mathfrak{M}}\circ\phi}\equalinlaw (X^{{\mathfrak{T}}})^{\phi}.\]
The proof is complete.
\end{proof}

Conversely, whenever $\{\mathfrak{X}^{\mathfrak{S}}\}_{\mathfrak{S}\in\age(\mathfrak{M})}$ is an age indexed random structure, we can construct an $\mathfrak{M}$-exchangeable random $\mathcal{L}'$-structure sequentially through its finite restrictions $(\mathfrak{X}|_{[n]})_{n\geq0}$: We first choose $\mathfrak{X}|_{[0]}$ according to $\mathfrak{X}^{\mathfrak{M}|_{[0]}}$ and, given $\mathfrak{X}|_{[n]}$, we choose $\mathfrak{X}|_{[n+1]}$  according to $\mathfrak{X}^{\mathfrak{M}|_{[n+1]}}$ conditioned on $\mathfrak{X}^{\mathfrak{M}|_{[n+1]}}|_{[n]}=\mathfrak{X}|_{[n]}$.
The upshot of Theorem \ref{thm:main1} is that this construction is always possible for $\mathfrak{M}$-exchangeable structures, as long as $\mathfrak{M}$ is ultrahomogeneous and has 2-DAP.
We prove this by first constructing a {potential age indexed structure}.

\begin{definition}[Potential age indexed structures]
  Let $f_1,\ldots,f_{r'}$ be Borel functions and $\mathfrak{M}$ be an $\mathcal{L}$-structure.
The \emph{potential age indexed structure generated by} $f_1,\ldots,f_{r'}$ is the process $\{\mathfrak{X}^{\mathfrak{S}}\}_{\mathfrak{S}\in\age(\mathfrak{M})}$ given by choosing $(\xi_s)_{s\subseteq\mathbb{N}:\,|s|\leq k}$ i.i.d.\ Uniform$[0,1]$ and $(\prec_s)_{s\subseteq\mathbb{N}:\,|s|\leq k}$ independent uniform random orderings and setting for any $\mathfrak{S}\in\age(\mathfrak{M})$
\[\vec x\in R_i^{\mathfrak{X}^{\mathfrak{S}}}\quad\Longleftrightarrow\quad f_i(\mathfrak{S},(\xi_s)_{s\subseteq\rng\vec x},(\prec_{\vec y})_{\vec y\subseteq\vec x}).\]
\end{definition}
We call this a ``potential'' age indexed structure because it need not satisfy the invariance property of an age indexed structure.
\begin{definition}
  We call $f_1,\ldots,f_{r'}$ \emph{age compatible} if the potential age indexed structure generated by $f_1,\ldots,f_{r'}$ is an age indexed structure.
\end{definition}
Note that age symmetry is a property of the sequence of functions collectively---it is possible for $f_1$ and $f_2$ to be individually age compatible, but $(f_1,f_2)$ is not.  When $f_1,\ldots,f_{r'}$ are age compatible, the function $f_j$ that maps the tuples $(\xi_{s})_{s\subseteq\Nb:\ |s|\leq k},(\prec_s)_{s\subseteq\Nb:\ |s|\leq k}$ to a value \emph{does} depend on the labeling of ${\mathfrak{S}}$, but its distribution does not, as the following example illustrates.

\begin{example}
A typical example that illustrates this is the age indexed structure corresponding to Example \ref{example:weak-rep}.
In this example, recall that $\mathcal{L}$ has a single $3$-ary relation and $\mathfrak{M}$ has the property that for each $i$, $\mathcal{R}(i)=\{(j,k)\mid (i,j,k)\in\mathcal{R}\}$ is an equivalence relation with two infinite equivalence classes, while $\mathcal{L}'$ has a single binary relation $S$.  
To generate the age indexed process, we define $f_1(\mathfrak{S}|_{[1,\max\{i,j\}]},\xi_{\{i\}},\xi_{\{j\}},\xi_{\{i,j\}})$ as follows: we ignore $\xi_{\{j\}}$ and $\xi_{\{i,j\}}$, and if $\xi_{\{i\}}<1/2$, we take $f_1(\mathfrak{S}|_{[1,\max\{i,j\}]},\xi_{\{i\}})=1$ if and only if $j$ is in same $\mathcal{R}(i)$ equivalence class as $1$, while if $\xi_{\{i\}}\geq 1/2$, we take $f_1(\mathfrak{S}|_{[1,\max\{i,j\}]},\xi_{\{i\}})=1$ if and only if $j$ is in a different $\mathcal{R}(i)$ equivalence class from $1$.  
(To avoid trivialities when $i=1$, we define $f_1$ in the analogous way according to whether $j$ is in the same $\mathcal{R}(1)$ equivalence class of $2$.)
Note that $\mathbb{P}(f_1(\mathfrak{S}|_{[1,\max\{i,j\}]},\xi_{\{j\}})=1)$ depends only on $\mathfrak{S}|_{\{i,j\}}$ (in this case the dependence is trivial, but in more complicated cases it need not be).
The more complicated dependence on the entire initial segment $\mathfrak{S}|_{[1,\max\{i,j\}]}$ tells us which values of $\xi_{\{j\}}$ correspond to which values of the function $f_1$.
\end{example}

We can now state a more general version of our main result, which drops the assumption that $\mathfrak{M}$ has $n$-DAP for all $n$.  
We prove this in Section \ref{sec:proof_main}.
\begin{theorem}\label{thm:main1}
Let $\mathcal{L}=\{Q_1,\ldots,Q_r\}$ and $\mathcal{L}'=\{R_1,\ldots,R_{r'}\}$ be signatures, let each $R_{i}$ have $\ar(R_{i})\leq k$, and let $\mathfrak{M}$ be a countable $\mathcal{L}$-structure that is ultrahomogeneous and whose age has the disjoint amalgamation property.  Without loss of generality, assume $|\mathfrak{M}|=\Nb$.

Suppose that $\mathfrak{X}$ is an $\mathfrak{M}$-exchangeable $\mathcal{L}'$-structure.
Then there exist age compatible Borel functions $f_1,\ldots,f_{r'}$ such that $\mathfrak{X}^{\mathfrak{S}}\equalinlaw\mathfrak{X}^{*,\mathfrak{S}}$ for every $\mathfrak{S}\in\age(\mathfrak{M})$ where $\mathfrak{X}^*$ is the age indexed structure generated by $f_1,\ldots,f_{r'}$.
\end{theorem}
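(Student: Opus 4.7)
The plan is to combine $\mathfrak{X}$ with an exchangeable presentation of $\mathfrak{M}$ supplied by Theorem \ref{AFP_theorem}, apply Theorem \ref{AH-general} to the resulting exchangeable joint structure, and then extract the age-indexed representation by conditioning together with a Borel measurable selection.

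First, I will invoke Theorem \ref{AFP_theorem} to obtain a dissociated, exchangeable random $\mathcal{L}$-structure $\mathfrak{N}$ on $\Nb$ that is almost surely isomorphic to $\mathfrak{M}$. On an enriched probability space carrying $\mathfrak{X}$ independently of $\mathfrak{N}$, I will Borel-measurably choose an isomorphism $\psi:\mathfrak{M}\to\mathfrak{N}$ and set $\tilde{\mathfrak{X}}:=\mathfrak{X}^{\psi^{-1}}$. Proposition \ref{prop:equiv} together with $\mathfrak{M}$-exchangeability of $\mathfrak{X}$ ensures that the conditional law of $\tilde{\mathfrak{X}}$ given $\mathfrak{N}$ does not depend on the choice of $\psi$, since two such isomorphisms differ by an automorphism of $\mathfrak{M}$. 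A direct calculation will then show that the joint random $(\mathcal{L}\cup\mathcal{L}')$-structure $(\mathfrak{N},\tilde{\mathfrak{X}})$ on $\Nb$ is exchangeable: for any permutation $\sigma$ of $\Nb$, $\mathfrak{N}^\sigma\equalinlaw\mathfrak{N}$ by exchangeability of $\mathfrak{N}$, and the conditional law of $\tilde{\mathfrak{X}}^\sigma$ given $\mathfrak{N}^\sigma=\mathfrak{M}'$ coincides with the conditional law of $\tilde{\mathfrak{X}}$ given $\mathfrak{N}=\mathfrak{M}'$, again by $\mathfrak{M}$-exchangeability.

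Second, I will apply Theorem \ref{AH-general} to $(\mathfrak{N},\tilde{\mathfrak{X}})$ to obtain Borel functions $\tilde g_1,\ldots,\tilde g_r,\tilde h_1,\ldots,\tilde h_{r'}$ and i.i.d.\ Uniform$[0,1]$ variables $(\xi_s)$, independent uniform random orderings $(\prec_s)$, generating an equidistributed pair $(\mathfrak{N}^*,\tilde{\mathfrak{X}}^*)$. The key identification is that for each $\mathfrak{S}\in\age(\mathfrak{M})$ with $|\mathfrak{S}|=[n]$, the conditional distribution of $\tilde{\mathfrak{X}}^*|_{[n]}$ given $\mathfrak{N}^*|_{[n]}=\mathfrak{S}$ equals the distribution of $\mathfrak{X}^\mathfrak{S}$: on $\{\mathfrak{N}|_{[n]}=\mathfrak{S}\}$, the restriction $\psi^{-1}|_{[n]}:[n]\to\psi^{-1}([n])$ is an embedding $\mathfrak{S}\to\mathfrak{M}$, and by Definition \ref{defn:relative exchangeability} the pullback $\tilde{\mathfrak{X}}|_{[n]}=\mathfrak{X}^{\psi^{-1}|_{[n]}}$ has the law of $\mathfrak{X}^\mathfrak{S}$.

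Finally, I will construct $f_i(\mathfrak{S},\cdot,\cdot)$ realizing the conditional distribution above by composing $\tilde h_i$ with a Borel transformation, depending on $\mathfrak{S}$, that pushes fresh i.i.d.\ Uniform inputs to the conditional distribution of the AH variables $(\xi_s,\prec_s)$ given $\mathfrak{N}^*|_{[n]}=\mathfrak{S}$. The hard part will be ensuring that $f_i$ depends only on $(\xi_s)_{s\subseteq\rng\vec x}$ and $(\prec_{\vec y})_{\vec y\subseteq\vec x}$, so that the AH locality required by the definition of a potential age indexed structure is preserved. To overcome this, I will choose the AH representation of the $\mathfrak{N}$-part judiciously, exploiting the AFP form of $\mathfrak{N}$ (which uses only unary randomness) so that the conditional joint distribution factors compatibly with the nested subset structure and can be simulated by a local Borel transformation. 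Age compatibility of the resulting family $\{\mathfrak{X}^{*,\mathfrak{S}}\}_\mathfrak{S}$ is then automatic: since $\mathfrak{X}^{*,\mathfrak{S}}\equalinlaw\mathfrak{X}^\mathfrak{S}$ for every $\mathfrak{S}$ and $\{\mathfrak{X}^\mathfrak{S}\}_\mathfrak{S}$ is already an age indexed random structure, the required invariance $(\mathfrak{X}^{*,\mathfrak{T}})^\phi\equalinlaw\mathfrak{X}^{*,\mathfrak{S}}$ under embeddings $\phi:\mathfrak{S}\to\mathfrak{T}$ follows at once.
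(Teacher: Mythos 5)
Your first two steps (building the exchangeable pair via Theorem \ref{AFP_theorem}, applying Theorem \ref{AH-general}, and identifying the conditional law of $\tilde{\mathfrak{X}}^*|_{[n]}$ given $\mathfrak{N}^*|_{[n]}=\mathfrak{S}$ with the law of $\mathfrak{X}^{\mathfrak{S}}$) match the paper's proof of Theorem \ref{thm:canonical rep}, and your observation that age compatibility follows automatically from the marginal identities is fine. The gap is entirely in your third step, and it is not a technicality --- it is the whole content of the theorem. The event $\{\mathfrak{N}^*|_{[n]}=\mathfrak{S}\}$ is a Borel subset of the space of singleton variables $(\zeta_1,\ldots,\zeta_n)$ that is in general \emph{not} a rectangle, even when the AFP representation of $\mathfrak{N}$ is random-free: it is cut out by conditions $g_i((\zeta_j)_{j\in\rng\vec x})=1$ whose level sets are arbitrary Borel sets in the product. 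Hence the conditional law of $(\zeta_s)$ given $\mathfrak{N}^*|_{[n]}=\mathfrak{S}$ has dependent coordinates, and no transformation that feeds each $f_i$ only the inputs $(\xi_s)_{s\subseteq\rng\vec x}$ can reproduce the correct joint correlations across different tuples $\vec x$. The factorization you hope to obtain ``compatibly with the nested subset structure'' is exactly the property of factoring through substructures (Definition \ref{defn:factors}), which Lemma \ref{thm:nice_rep} shows is available precisely when $\mathfrak{M}$ has $n$-DAP for all $n$ --- the hypothesis this theorem deliberately drops. Example \ref{example:weak-rep} is a concrete witness that no such local simulation exists in general under mere DAP.

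The paper's route around this obstruction is genuinely different from what you propose and cannot be recovered by ``choosing the AH representation judiciously.'' It uses Theorem \ref{thm:subalgebra} to split each singleton variable as $\zeta_i=h(\eta_i,\xi_i)$, where the $\eta_i$ generate a $\sigma$-algebra $\mathcal{B}$ determining $\mathfrak{Z}^*$ and built only from sets definable from the $g_i$; a separate claim, which uses relative exchangeability of $\mathfrak{X}^*$ in an essential way, shows that conditioning on such $\mathcal{B}$-events beyond the isomorphism type $\mathfrak{S}$ does not perturb the conditional law of $\mathfrak{X}^*|_s$. This licenses substituting a single \emph{deterministic} ``typical'' value $\eta_{\mathfrak{S}\upharpoonright i}$ for each coordinate --- chosen by induction on initial segments to avoid the exceptional null sets --- rather than sampling from the non-product conditional distribution at all. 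That inductive, segment-by-segment choice is also why the resulting $f_j$ depends on the entire initial structure $\mathfrak{S}|_{[\max\vec x]}$ and not just $\mathfrak{S}|_{\rng\vec x}$, a feature your outline does not account for. Without an argument replacing Theorem \ref{thm:subalgebra} and the independence claim, your step 3 does not go through.
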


Theorem \ref{thm:main1} drops the $n$-DAP assumption from Theorem \ref{thm:nice1}, but now $f_j$ depends on the entire finite structure $\mathfrak{S}$, not just on $\mathfrak{S}|_{\rng\vec x}$.  The corresponding $\mathfrak{M}$-exchangeable structure $\mathfrak{X}^*$ can then be constructed by
\begin{equation}\label{eq:rep1full}
\vec x\in R_j^{\mathfrak{X}^{*}}\quad\Longleftrightarrow\quad f_j(\mathfrak{M}|_{[\max \vec x]},(\xi_{s})_{s\subseteq\rng\vec x},(\prec_{\vec y})_{\vec y\subseteq\vec x})=1,\quad \vec x\in\Nb^{\ar(R_j)},
\end{equation}
for  $j=1,\ldots,r'$.
That is, we need to look at the entire structure up to $\max\vec x$, not just the substructure indexed by $\rng\vec x$.

\begin{remark}
  Again, the analogous statement holds, by the same argument, for Borel-valued structures.
\end{remark}

The representation in \eqref{eq:rep1full} yields a natural sequential construction for relatively exchangeable structures $\mathfrak{X}$ through their finite substructures $(\mathfrak{X}|_{[n]})_{n\geq0}$.
During the sequential construction, we need only keep track of the piece of $\mathfrak{M}$ we have ``seen'' so far, in the sense that when determining $\mathfrak{X}|_{[n+1]}$ based on $\mathfrak{X}|_{[n]}$, we need only consider $\mathfrak{M}|_{[n+1]}$.

Recall that our initial example of an exchangeable structure involved taking a large structure $\mathfrak{U}=(\Omega,\mathcal{R}_1,\ldots,\mathcal{R}_r)$ and a probability measure $\mu$ on $\Omega$ and sampling $\phi(1),\phi(2),\ldots$ i.i.d.\ from $\mu$ to obtain $\mathfrak{X}=\mathfrak{U}^{\phi}$.  The analogous procedure for choosing an $\mathfrak{M}$-exchangeable random structure entails taking a large $\mathcal{L}\cup\mathcal{L}'$-structure $\mathfrak{U}=(\Omega,\mathcal{Q}_1,\ldots,\mathcal{Q}_r,\mathcal{R}_1,\ldots,\mathcal{R}_{r'})$ and choosing points $\phi(n)\in\Omega$ subject to the constraint that the reduct $\mathfrak{U}^{\phi}\upharpoonright\mathcal{L}=(\Nb,\mathcal{Q}_1^{\phi},\ldots,\mathcal{Q}_r^{\phi})$ forms a model of $\mathfrak{M}$.  
The most natural approach is to choose points successively---first choose $\phi(0)$, then choose $\phi(1)$ subject to the constraints induced by the choice of $\phi(0)$, and so on.  The dependence of $f_j$ on an entire initial segment of $\mathfrak{M}$ reflects this procedure.

The next example demonstrates that trivial definable closure (without ultrahomogeneity) is not suficient to obtain the representation in Theorem \ref{thm:main1}.
\begin{example}\label{ex:TDC}
Let $\mathfrak{M}$ be the directed graph with an edge $(i,j)$ if and only if $j$ is odd and $j\neq i$.
We let $\mathfrak{X}=(\Nb,\mathcal{P})$ be a random unary relation such that, with probability $1/3$, $\mathcal{P}$ is the set of even integers and, with probability $2/3$, $\mathcal{P}$ contains each odd integer independently with probability $1/2$.
Since $\mathfrak{M}$ does not admit self-loops, individual points are indistinguishable  in $\mathfrak{M}$, and every element has marginal probability $1/3$ to appear in $\mathcal{P}$.
In any finite substructure of $\mathfrak{M}$ with size larger than $2$, we can distinguish the evens and odds, and $\mathfrak{X}$ is clearly exchangeable under preserving the even/odd distinction.  But $\mathfrak{X}$ does not have the representation in Theorem \ref{thm:main1}.  Since $\mathfrak{M}$ is trivial on singletons, the marginal representation of each point must have the form
\[n\in\mathcal{P}\quad\Longleftrightarrow\quad f(\xi_{\emptyset},\xi_{\{n\}})=1,\]
implying that $\mathfrak{X}$ must be fully exchangeable, which it is not.

Notice here that $\mathfrak{M}$ has trivial definable closure but lacks ultrahomogeneity: every even integer can be mapped to any odd integer as singletons, but this embedding cannot be extended to an automorphism of $\mathfrak{M}$.
\end{example}

\section{Relative Exchangeability}\label{section:proofs}

\subsection{Structure of proofs}\label{sketch}
By Proposition \ref{prop:equiv}, we immediately obtain the statement of Theorems \ref{thm:nice1} and \ref{thm:main1} under the weaker condition of Proposition \ref{prop:equiv}(ii).
The observation in Proposition \ref{prop:equiv} adds clarity to our proofs and fosters intuition for the attained representations in our main theorems.
The proofs of Theorems \ref{thm:nice1} and \ref{thm:main1} involve some different technicalities depending on the different assumptions; however, they share a similar structure that we outline here.
The key ideas center on our chosen definition of relative exchangeability and a combination of the Aldous--Hoover theorem (Theorem \ref{AH-general}) and Theorem \ref{AFP_theorem}.

The core of the argument is the same in both cases.
Since $\mathfrak{M}$ has trivial definable closure, Theorem \ref{AFP_theorem} guarantees the existence of an exchangeable, dissociated probability measure $\mu$ such that $\mathfrak{Z}^*\sim\mu$ is isomorphic to $\mathfrak{M}$ with probability 1.
By assumption, $\mathfrak{X}^*$ is distributed according to an $\mathfrak{M}$-exchangeable probability distribution $P_{\mathfrak{M}}$, whose image under relabeling $\mathfrak{X}^*$ by $\sigma$ is an $\mathfrak{M}^{\sigma}$-exchangeable measure $P_{\mathfrak{M}^{\sigma}}$.
Let $\mathfrak{M}^*$ denote the realization of $\mathfrak{Z}^*$, for which we know there exists a permutation $\sigma:|\mathfrak{M}|\to|\mathfrak{M}|$ such that $\mathfrak{M}^{*\sigma}=\mathfrak{M}$.
Given $\mathfrak{M}^*$, we let $\mathfrak{X}^*$ be an $\mathfrak{M}^*$-exchangeable structure from $P_{\mathfrak{M}^*}$ so that the pair $(\mathfrak{M}^*,\mathfrak{X}^*)$ is jointly exchangeable.
We can regard the pair $(\mathfrak{M}^*,\mathfrak{X}^*)$ as a single $\mathcal{L}\cup\mathcal{L}'$-structure, which is exchangeable by construction and, therefore, possesses an Aldous--Hoover representation as in Theorem \ref{AH-general}.

At this point, the details vary based on the additional assumptions about $\mathfrak{M}$, but the main idea is the same.
Since $\mathfrak{M}$ is ultrahomogeneous, then so is $\mathfrak{M}^*$ with probability 1.
Furthermore,  $\age(\mathfrak{M}^*)=\age(\mathfrak{M})$ with probability 1.
In particular, embedded in $\mathfrak{M}^*$ are infinitely many (and in fact a positive density of) copies of every structure in the age of $\mathfrak{M}$.
By ultrahomogeneity, we can go through $\mathfrak{M}^*$ and sequentially choose representatives $\phi(1),\phi(2),\ldots$ such that the domain restriction $\phi\upharpoonright[n]$ is an embedding $\mathfrak{M}|_{[n]}\to\mathfrak{M}^*$ for every $n\geq1$.
Intuitively---we will make this rigorous---the distribution of $\mathfrak{X}^{*\phi}$, given $\mathfrak{M}^*$, depends only on $\mathfrak{M}^{*\phi}=\mathfrak{M}$ and is a copy of an $\mathfrak{M}$-exchangeable structure.

The remainder of the argument relies on a special form of the Aldous--Hoover representation in each case, which in turn determines the nature of our representation for $\mathfrak{X}^*$.
Under $n$-DAP for all $n\geq1$, the distribution of $\mathfrak{Z}^*$ {\em factors through substructures} (Definition \ref{defn:factors}), while under 2-DAP the representation of $\mathfrak{X}^*$ is as in \eqref{eq:random-free}.
Since Aldous--Hoover representations are unique up to measure-preserving transformations, we can always transform to get the appropriate representation.
Theorem \ref{thm:subalgebra} ensures that our choice of embedding $\phi$, which depends on $\mathfrak{Z}^*$ and is therefore random, does not affect the ensuing distribution of $\mathfrak{X}^*$.

We begin with a proof of Theorem \ref{thm:nice1}.

\subsection{Distributions with Enough Amalgamation}\label{section:framewise}

Under the assumption that $\mathfrak{M}$ satisfies $n$-DAP for all $n\geq1$, the following lemma shows that there is a well-behaved representation of $\mathfrak{M}$, a key idea in our proof of Theorem \ref{thm:nice1}.

\begin{definition}\label{defn:factors}
Suppose $\mathfrak{M}$ is the exchangeable structure generated by $f_1,\ldots,f_r$.  We say $\mathfrak{M}$ \emph{factors through substructures} if there are functions $\hat f_i$ so that for almost all $(\xi_s)_{s\subseteq\Nb:\ |s|\leq k},(\prec_s)_{s\subseteq\Nb:\ |s|\leq k}$,
\begin{equation}
f_i((\xi_s)_{s\subseteq\rng x},(\prec_{\vec y})_{\vec y\subseteq\vec x})=\hat f_i(\{\mathfrak{M}|_s\}_{s\subsetneq\rng\vec x},\xi_{\rng\vec x},\prec_{\vec x}).\label{eqn:factor}
\end{equation}
\end{definition}
In general, the variables $\xi_s$ encode the $|s|$-ary information about the structure $\mathfrak{M}$.  When $\mathfrak{M}$ factors through substructures, the only dependence $f_i$ has on the information of arity strictly less than $|\rng\vec x|$ is already realized by the lower arity part of the structure $\mathfrak{M}$.  This means that the functions $f_i$ have no ``hidden'' information: all the information in $\xi_s,\prec_s$ is represented in $\mathfrak{M}|_s$.

\begin{lemma}\label{thm:nice_rep}
  Suppose $\mathfrak{M}$ is ultrahomogeneous and satisfies $n$-DAP for all $n$.  Then there are Borel functions $f_1,\ldots,f_r$ so that the exchangeable structure $\mathfrak{M}^*$ generated by $f_1,\ldots,f_r$ is isomorphic to $\mathfrak{M}$ with probability 1 and factors through substructures.
\end{lemma}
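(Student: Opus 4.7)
The plan is to build $\mathfrak{M}^*$ and the functions $f_1,\ldots,f_r$ directly by induction on the arity $m=|s|$ of substructures, so that the factor-through-substructures property is manifest by construction, and then verify via Fra\"isse's characterization that the resulting random structure is almost surely isomorphic to $\mathfrak{M}$.

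Inductively, suppose $\mathfrak{M}^*|_{s'}$ has been defined for every finite $s'\subseteq\Nb$ with $|s'|<m$. For each $s$ of size $m$, the family $\{\mathfrak{M}^*|_{s'}\}_{s'\subsetneq s}$ is pairwise compatible by construction, so by the $m$-DAP hypothesis the set $E_s\subseteq\age(\mathfrak{M})$ of structures on $s$ extending this family is nonempty. I would use $(\xi_s,\prec_s)$ to sample $\mathfrak{M}^*|_s\in E_s$ according to a positive-probability distribution (uniform will do), with $\prec_s$ available to disambiguate labeled isomorphism classes inside $E_s$ when needed. Since $\mathfrak{M}^*|_s$ is then a Borel function of $\{\mathfrak{M}^*|_{s'}\}_{s'\subsetneq s}$ together with $\xi_s$ and $\prec_s$, the induced functions $\hat f_i$ satisfy \eqref{eqn:factor}, i.e.\ factor through substructures in the sense of Definition \ref{defn:factors}.

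It remains to show $\mathfrak{M}^*\cong\mathfrak{M}$ almost surely. By Fra\"isse's theorem, it suffices to establish a.s.\ that (i) $\age(\mathfrak{M}^*)=\age(\mathfrak{M})$ and (ii) $\mathfrak{M}^*$ satisfies the one-point extension property. The inclusion $\age(\mathfrak{M}^*)\subseteq\age(\mathfrak{M})$ is automatic. For the reverse, induction on $|\mathfrak{T}|$ shows that every $\mathfrak{T}\in\age(\mathfrak{M})$ has strictly positive probability of appearing as $\mathfrak{M}^*|_s$ for any fixed $s$ of matching size, because each sampling step assigns positive mass to every element of $E_s$; the exchangeability of the construction then forces $\mathfrak{T}\in\age(\mathfrak{M}^*)$ a.s. For (ii), given $\mathfrak{M}^*|_S$ and a target one-point extension $\mathfrak{B}\in\age(\mathfrak{M})$, the incremental randomness controlling $\mathfrak{M}^*|_{S\cup\{a'\}}$ consists precisely of the $\xi_{s'}$ and $\prec_{s'}$ for $s'$ with $a'\in s'\subseteq S\cup\{a'\}$; these index sets are disjoint across distinct $a'$, so the events ``$\mathfrak{M}^*|_{S\cup\{a'\}}\cong\mathfrak{B}$'' are conditionally i.i.d.\ with strictly positive probability, and Borel--Cantelli produces the desired $a'$.

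The main obstacle I anticipate is the careful bookkeeping in (ii) to justify the conditional independence of the extension events across distinct $a'$, together with confirming exchangeability of the inductive sampling so that the positive-probability-appearance claim in (i) is invariant across all $s$ of a given size. Once these measure-theoretic points are dispatched, the standard back-and-forth characterization immediately delivers $\mathfrak{M}^*\cong\mathfrak{M}$ a.s.
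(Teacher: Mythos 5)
Your proposal is correct and follows essentially the same route as the paper: both build $\mathfrak{M}^*$ level by level, using $n$-DAP to guarantee a nonempty set of amalgams of $\{\mathfrak{M}^*|_{s'}\}_{s'\subsetneq s}$, sampling among them with positive probability via $\xi_s$ while using $\prec_s$ to break the labeling symmetry, so that factoring through substructures holds by construction. The verification of $\mathfrak{M}^*\cong\mathfrak{M}$ a.s.\ (equality of ages by positive probability at each inductive stage, plus the one-point extension property via independence of the extension events over disjoint witness sets, followed by back-and-forth) is likewise the paper's argument.
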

The construction in the following proof is essentially the \emph{frame-wise uniform measure} introduced in \cite{2013arXiv1308.5517B}; see also \cite{KruckmanNote}.
\begin{proof}
For each $n$, let $\age_n(\mathfrak{M})$ be the elements of $\age(\mathfrak{M})$ of size $n$.  Pick any Borel-measurable map $\mathcal{S}:[0,1]\rightarrow\age_1(\mathfrak{M})$ such that for any $\mathfrak{S}\in\age_1(\mathfrak{M})$, $\mathcal{S}^{-1}(\mathfrak{S})$ has positive Lebesgue measure.  Then we set $\hat f_i(\emptyset,\xi_{m},\prec_{m})=1$ if and only if $1\in R^{\mathcal{S}(\xi_m)}_i$.  (Recall that, by our convention, $\mathcal{S}(\xi_m)\in\age_1(\mathfrak{M})$ and therefore $|\mathcal{S}(\xi_m)|=\{1\}$, so we are looking at the unique point of $\mathcal{S}(\xi_m)$.)

Suppose we are given structures $\{\mathfrak{M}^*|_s\}_{s\subsetneq\rng\vec x}$.  Then (after identifying $\rng\vec x$ with $[1,|\rng\vec x|]$) the structures $\mathfrak{S}_i=\mathfrak{M}^*|_{\rng\vec x\setminus\{i\}}$ satisfy the conditions of $n$-DAP.  Let $\mathcal{A}\subseteq\age_n(\mathfrak{M})$ be the set of amalgams and let $\mathcal{A}'$ be a choice of representatives from each isomorphism class of $\mathcal{A}$.  $n$-DAP ensures that $\mathcal{A}$, and therefore $\mathcal{A}'$, is non-empty.  Pick $\mathcal{S}:[0,1]\rightarrow\mathcal{A}'$ Borel-measurable so that for each $\mathfrak{A}\in\mathcal{A}'$, $\mathcal{S}^{-1}(\mathfrak{A})$ has positive measure.
(This is possible because $\mathcal{A}'$ is finite.)

The partial models $\{\mathfrak{M}^*|_s\}_{s\subsetneq\rng\vec x}$ and an amalgam $\mathfrak{A}'\in\mathcal{A}'$ may not be enough to fully specify an amalgam, because $\mathfrak{A}'$ may introduce some new asymmetry---that is, there may be multiple ways to amalgamate $\{\mathfrak{M}^*|_s\}_{s\subsetneq\rng\vec x}$ into an isomorphic copy of $\mathfrak{A}'$.  Since the automorphism group of $\mathfrak{A}'$ is a subgroup of the permutations of $\rng\vec x$, we can associate to each $\prec_{\rng\vec x}$ some such amalgam $\mathfrak{A}_{\prec_{\rng\vec x}}$ so that the association respects the automorphism group of $\mathfrak{A}'$.  Then we can set
$\hat f_i(\{\mathfrak{M}^*|_s\}_{s\subsetneq\rng\vec x},\xi_{\rng\vec x},\prec_{\vec x})=1$ if and only if $\vec x\in R^{\mathfrak{A}_{\prec_{\rng\vec x}}}_i$.

Consider some randomly constructed $\mathfrak{M}^*$ built according to the functions $\hat f_i$.  
By definition $\age(\mathfrak{M}^*)\subseteq\age(\mathfrak{M})$, but also $\age(\mathfrak{M})\subseteq\age(\mathfrak{M}^*)$ with probability 1, as we now show by induction on the size of $|\mathfrak{S}|$.
  It suffices to show that the probability of each $\mathfrak{S}$ occurring is positive.  If $\mathfrak{S}\in\age_1(\mathfrak{M})$ then this is by definition.  If $\mathfrak{S}\in\age_{n+1}(\mathfrak{M})$ and $s=\{s_1,\ldots,s_{n+1}\}\subseteq\mathbb{N}$ with $|s|=n+1$, with positive probability, each $\mathfrak{M}^*|_{\{s_i\}}$ is isomorphic to $\mathfrak{S}|_{\{i\}}$.  Then, since each possible amalgam occurs with positive probability, there is a non-zero chance that each $\mathfrak{M}^*|_{\{s_i,s_j\}}$ is isomorphic to $\mathfrak{S}|_{\{i,j\}}$.  Continuing in this way, there is a non-zero chance that $\mathfrak{M}^*|_s$ is isomorphic to $\mathfrak{S}$.  Therefore, with probability 1, $\mathfrak{S}\in\age(\mathfrak{M}^*)$.

Further, $\mathfrak{M}^*$ is almost surely ultrahomogeneous.  To see this, it suffices to show that for any $S$ with $|S|=n$, any $\phi:S\rightarrow[1,n]$, and any $\mathfrak{T}\in\age_{n+1}(\mathfrak{M}^*)$ so that $\mathfrak{M}^*|_S=\mathfrak{T}^\phi$, there is some $x\not\in S$ so that $\mathfrak{M}^*|_{S\cup\{x\}}=\mathfrak{T}^{\phi_x}$ (where $\phi_x$ extends $\phi$ by $\phi_x(x)=n+1$).  First, since $\mathfrak{T}|_{\{n+1\}}\in\age_1(\mathfrak{M}^*)=\age_1(\mathfrak{M})$, there are infinitely many $x$ so that $\mathfrak{M}^*|_x$ is isomorphic to $\mathfrak{T}|_{\{n+1\}}$.  Since $\mathfrak{T}\in\age(\mathfrak{M}^*)$, $\mathfrak{T}^{\phi_x}$ is one of the possible amalgams of $\{\mathfrak{M}^*|_s\}_{s\subsetneq S\cup\{x\}}$, so for each $x$ there is positive probability that $\mathfrak{M}^*|_{S\cup\{x\}}=\mathfrak{T}^{\phi_x}$.  In particular, with probability $1$, there is some $x$ such that $\mathfrak{M}^*|_{S\cup\{x\}}=\mathfrak{T}^{\phi_x}$.  By a standard back-and-forth argument, $\mathfrak{M}$ and $\mathfrak{M}^*$ are isomorphic.
\end{proof}

\begin{example}
  The natural representation of the random graph is by defining $f(\xi_{\{i,j\}})=1$ if and only if $\xi_{\{i,j\}}\in[0,1/2]$.  (The structure restricted to singletons is trivial and can be ignored.)
\end{example}

\begin{example}
  The random tournament can also be expressed in this way: the structure restricted to a singleton is always trivial, so we define $f(\xi_{\{i,j\}},\prec_{( i,j)})=1$ if and only if $i\prec_{( i,j)}j$.
\end{example}

\begin{example}
  Suppose $\mathcal{L}$ contains a single $k$-ary relation $R$ and we assume $R^{\mathfrak{M}}$ is symmetric and anti-reflexive (only holds for tuples containing $k$ distinct elements) and is non-trivial (contains at least one $k$-tuple but not all $k$-tuples).  Then the representation in Lemma \ref{thm:nice_rep} implies that $\mathfrak{M}$ is the random $k$-ary hypergraph (in particular, all $k$-ary hypergraphs are embedded in $\mathfrak{M}$).  This is because $\mathfrak{M}^*$ restricted to subsets of size less than $k$ is trivial---since there are no relations of arity less than $k$ in $\mathcal{L}$, no substructure of size less than $k$ contains any instances of $R$---so we have $\vec x\in R^{\mathfrak{M}^*}$ if and only if  $f_i(\xi_{\vec x})=1$.  Thus, if $\vec x_1,\ldots,\vec x_d$ are pairwise distinct tuples each consisting of $k$ distinct elements, the events $\vec x_i\in R^{\mathfrak{M}^*}$  and $\{\vec x_j\in R^{\mathfrak{M}^*}\mid j\neq i\}$ are independent, so all $k$-ary hypergraphs appear with positive probability.
\end{example}

We note that the existence of representations which factor through substructures actually characterizes ultrahomogeneous structures with $n$-DAP.

\begin{lemma}
  Suppose $\mathfrak{M}$ is a $\mathcal{L}$-structure on $\mathbb{N}$ and there exists a random $\mathcal{L}$-structure $\mathfrak{M}^*$ which factors through substructures and, with probability $1$, is isomorphic to $\mathfrak{M}$.  Then $\mathfrak{M}$ is ultrahomogeneous and has $n$-DAP for all $n$.
\end{lemma}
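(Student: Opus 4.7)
The plan is to show that $\mathfrak{M}^*$ itself has $n$-DAP for all $n \geq 1$ and is ultrahomogeneous with probability~$1$, and then transfer both properties to $\mathfrak{M}$ via the almost sure isomorphism $\mathfrak{M}^* \cong \mathfrak{M}$. Both properties are isomorphism-invariant, and the hypothesis gives $\age(\mathfrak{M}^*) = \age(\mathfrak{M})$ almost surely, so this suffices. The crucial feature of factoring through substructures is that, once we condition on $\{\mathfrak{M}^*|_{t'}\}_{t' \subsetneq t}$, the restriction $\mathfrak{M}^*|_t$ on tuples of range exactly $t$ is a function only of the fresh parameters $\xi_t$ and $\prec_t$, and these are mutually independent across distinct $t$.

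First I would verify $n$-DAP. Let $\mathfrak{S}_1, \ldots, \mathfrak{S}_n$ satisfy the hypothesis of the definition and, for each $t \subsetneq [n]$, set $\mathfrak{U}_t := \mathfrak{S}_i|_t$ for any $i \notin t$; this is well-defined by compatibility and lies in $\age(\mathfrak{M})$. I would prove by induction on $k$ that the event $E_k := \{\mathfrak{M}^*|_t = \mathfrak{U}_t : t \subsetneq [n],\ |t| \leq k\}$ has positive probability. The base case $k=1$ is immediate because the singleton restrictions are i.i.d.\ and each element of $\age_1(\mathfrak{M})$ occurs with positive probability. For the inductive step, conditional on $E_{k-1}$ the events $\{\mathfrak{M}^*|_t = \mathfrak{U}_t\}$ for distinct $t$ of size $k$ depend on disjoint fresh parameters and are therefore conditionally independent; each has positive conditional probability, since $\mathfrak{U}_t \in \age(\mathfrak{M}^*)$ a.s.\ together with exchangeability of the underlying i.i.d.\ parameters forces realization on the specific labeled subset $t$ with positive probability. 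On $E_{n-1}$ the structure $\mathfrak{M}^*|_{[n]}$ itself lies in $\age(\mathfrak{M}^*) = \age(\mathfrak{M})$ and its proper restrictions realize the $\mathfrak{S}_i$, yielding the required amalgam.

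For ultrahomogeneity I would run a standard back-and-forth and reduce to the one-point extension property for $\mathfrak{M}^*$: given a partial isomorphism $\phi : \mathfrak{M}^*|_S \to \mathfrak{M}^*|_T$ and $a \notin S$, find $b \notin T$ such that $\phi \cup \{(a,b)\}$ remains a partial isomorphism. The desired extension of $\mathfrak{M}^*|_T$ to $T \cup \{b\}$ is a prescribed element of $\age(\mathfrak{M})$ isomorphic to $\mathfrak{M}^*|_{S \cup \{a\}}$. For each $b \in \mathbb{N} \setminus (T \cup S \cup \{a\})$, the conditional distribution of $\mathfrak{M}^*|_{T \cup \{b\}}$ given $\mathfrak{M}^*|_T$ is governed by the fresh parameters $\{(\xi_s, \prec_s) : b \in s \subseteq T \cup \{b\}\}$, which are disjoint for distinct $b$ and hence conditionally independent across candidates. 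The same inductive positivity argument from the previous paragraph furnishes a uniform positive lower bound on the success probability for each $b$, so by a Borel--Cantelli argument infinitely many acceptable $b$ exist almost surely.

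The main technical obstacle is the labeled-positivity statement underlying both arguments: for a fixed finite $t \subseteq \mathbb{N}$ and any $\mathfrak{U}_t \in \age(\mathfrak{M})$, one must show $\mathbb{P}(\mathfrak{M}^*|_t = \mathfrak{U}_t) > 0$ for the specific labeling, not merely up to isomorphism. Since $\age(\mathfrak{M}^*) = \age(\mathfrak{M})$ a.s., countable additivity produces some $s \subseteq \mathbb{N}$ and bijection $\sigma : t \to s$ with $\mathbb{P}(\mathfrak{M}^*|_s = \sigma_* \mathfrak{U}_t) > 0$, and exchangeability of the underlying i.i.d.\ parameters transfers this positivity to the target subset $t$ with the prescribed labeling. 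Once this positivity is in hand, the conditional independence afforded by the factoring-through-substructures hypothesis reduces the remaining inductive bookkeeping to a routine exercise.
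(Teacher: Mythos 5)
Your proposal is correct and rests on exactly the two facts the paper's proof uses: (a) every labeled configuration from $\age(\mathfrak{M})$ occurs at any fixed finite subset with positive probability (via countable additivity plus exchangeability of the underlying i.i.d.\ data), and (b) conditional on the lower-level restrictions, the top-level data $(\xi_t,\prec_t)$ for distinct $t$ are independent, which is precisely what factoring through substructures buys. The ultrahomogeneity half is the same argument the paper gives (one-point extensions, disjoint fresh parameters across candidate witnesses $b$, Borel--Cantelli). The only genuine difference is in how the $n$-DAP induction is organized: the paper inducts on $n$ itself, using the already-established $(n-1)$-DAP to produce infinitely many pairwise disjoint sets $S$ realizing the $(n-2)$-level ``frame'' and then intersecting the $n$ independent positive-measure events $\Xi_i$ for the top-level variables $\xi_{S\setminus\{\phi^{-1}(i)\}}$ across those infinitely many candidates; you instead fix a single target set $[n]$ and induct on the subset size $k$, showing directly that $\mathbb{P}(E_{n-1})>0$ and then reading off the amalgam from the almost-sure inclusion $\mathfrak{M}^*|_{[n]}\in\age(\mathfrak{M})$. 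Your organization is slightly more economical---it needs neither the induction on $n$ nor the ``infinitely many disjoint witnesses'' step, since positive probability of the full labeled event on one set already suffices---while the paper's version keeps the bookkeeping at each stage to a single layer of fresh variables. Both are sound; the positivity-of-labeled-configurations lemma you isolate in your final paragraph is the right thing to make explicit, as it is used implicitly in the paper's appeal to ``each possible amalgam occurs with positive probability.''
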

\begin{proof}
The argument above shows that $\mathfrak{M}^*$, and therefore $\mathfrak{M}$, is ultrahomogeneous.

To see that $\mathfrak{M}^*$, and therefore $\mathfrak{M}$, has $n$-DAP for all $n$, we proceed inductively.  Consider suitable structures $\{\mathfrak{S}_i\}_{i\leq n}$ in $\age(\mathfrak{M}^*)$, and suppose we have already shown $n-1$-DAP; in particular, we have already shown that there are infinitely many pairwise disjoint sets $S$ with $\phi:S\rightarrow[1,n]$ so that, for each distinct $i,j\in[1,n]$, $\mathfrak{M}^*|_{S\setminus\{\phi^{-1}(i),\phi^{-1}(j)\}}=\mathfrak{S}_i|_{[1,n]\setminus\{i,j\}}^\phi$.

  For each $i$, there is a set $\Xi_i$ of positive measure such that if $\xi_{S\setminus \{\phi^{-1}(i)\}}\in \Xi_i$ and for each $j\in[1,n]\setminus\{i\}$, $\mathfrak{M}^*|_{S\setminus\{\phi^{-1}(i),\phi^{-1}(j)\}}=\mathfrak{S}_i|_{[1,n]\setminus\{i,j\}}^\phi$ then $\mathfrak{M}^*|_{S\setminus\{\phi^{-1}(i)\}}=\mathfrak{S}_i^\phi$.  Since the collection $\{\xi_{S\setminus \{\phi^{-1}(i)\}}\mid i\in[1,n]\}$ is independent, and there are infinitely many choices of $S$, there must be some such set $S$ where each $\xi_{S\setminus \{\phi^{-1}(i)\}}\in\Xi_i$, and therefore $\mathfrak{M}^*|_S$ is an amalgam of $\{\mathfrak{S}_i\}_{i\leq n}$ in $\age(\mathfrak{M}^*)$.
\end{proof}

\subsection{Proof of Theorem \ref{thm:nice1}}\label{sec:proof_nice}

\begin{proof}[Proof of Theorem \ref{thm:nice1}]
We have two languages $\mathcal{L}=\{Q_1,\ldots,Q_r\}$ and $\mathcal{L}'=\{R_1,\ldots,R_{r'}\}$.

  Let $\mathfrak{M}$ and $\mathfrak{X}$ be given.  By Lemma \ref{thm:nice_rep}, we can choose a random exchangeable structure $\mathfrak{M}^*$ that with probability 1 is isomorphic to $\mathfrak{M}$ and $\mathfrak{M}^*$ factors through substructures.  

Since $\mathfrak{M}^*$ is exchangeable and $\mathfrak{X}$ is $\mathfrak{M}^*$-exchangeable, we can combine $\mathfrak{M}^*$ with $\mathfrak{X}$ to obtain an exchangeable probability measure on $\mathcal{L}\cup\mathcal{L}'$-structures $(\mathfrak{Z}^*,\mathfrak{X}^*)$.  By Aldous--Hoover, there exist functions $g_i, h_j$ so that for $(\zeta_s)_{s\subseteq\Nb}$ i.i.d.\ Uniform$[0,1]$ and $(\sqsubset_s)_{s\subseteq\Nb}$ independent uniform random orderings
\begin{itemize}
\item $\vec x\in Q^{\mathfrak{Z}^*}_i\quad\Longleftrightarrow\quad g_i((\zeta_s)_{s\subseteq\rng\vec x},(\sqsubset_{\vec y})_{\vec y\subseteq\vec x})=1$,
\item $\vec x\in R^{\mathfrak{X}^*}_j\quad\Longleftrightarrow\quad h_j((\zeta_s)_{s\subseteq\rng\vec x},(\sqsubset_{\vec y})_{\vec y\subseteq\vec x})=1$.
\end{itemize}

We would like our representation of $\mathfrak{Z}^*$ to factor through substructures; since Aldous--Hoover representations are not quite unique, it takes some tedious work to make this happen, but readers may wish to take this claim for granted and move on to the heart of the proof.
\begin{claim}
Without loss of generality, there are functions $\hat g_i$ so that
\[g_i((\zeta_s)_{s\subseteq\rng\vec x},(\sqsubset_{\vec y})_{\vec y\subseteq\vec x})=\hat g_i(\{\mathfrak{Z}^*\mid_s\}_{s\subsetneq\rng\vec x},\zeta_{\rng\vec x},\sqsubset_{\vec x}).\]
\end{claim}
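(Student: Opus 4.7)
The plan is to exploit the non-uniqueness of Aldous--Hoover representations. While the joint representation $(g_i, h_j)$ is obtained by applying Aldous--Hoover to the exchangeable pair $(\mathfrak{Z}^*, \mathfrak{X}^*)$, the marginal distribution of $\mathfrak{Z}^*$ coincides with that of $\mathfrak{M}^*$ from Lemma \ref{thm:nice_rep}, which does admit a representation that factors through substructures. The strategy is to transform the variables $(\zeta_s, \sqsubset_s)$ so that the action of $g_i$ on them becomes the factoring representation.

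First I would split each $\zeta_s$ via a Borel isomorphism $[0,1]\cong [0,1]^2$ into independent uniforms $(\xi_s, \eta_s)$, and similarly extract auxiliary randomness from $\sqsubset_s$ (or absorb it into $\eta_s$). The roles are as follows: $(\xi_s, \prec_s)$ will drive the factoring representation $\hat f_i$ of Lemma \ref{thm:nice_rep}, producing a structure $\hat{\mathfrak{Z}}$ with $\hat{\mathfrak{Z}} \equalinlaw \mathfrak{Z}^*$, while $(\eta_s)$ carries whatever extra randomness is needed to recover $\mathfrak{X}^*$ from the original $h_j$.

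Second, I would invoke the essential uniqueness of Aldous--Hoover representations (Hoover's theorem, as developed in Kallenberg \cite{KallenbergSymmetries}) to couple the two representations of the $\mathfrak{Z}^*$-marginal: there exist Borel measure-preserving maps converting $(\xi_s, \prec_s)$ into variables that reproduce $\mathfrak{Z}^*$ under $g_i$. Applying this coupling to the joint representation yields a new joint representation in which the $\mathfrak{Z}^*$ component is generated by $\hat f_i$, hence factors through substructures, while the $\mathfrak{X}^*$ component is generated by a transformed $h'_j$ that remains Borel. Setting $\hat g_i := \hat f_i$ and reabsorbing the auxiliary variables into single uniforms $\zeta_s$ and orderings $\sqsubset_s$ then recovers the displayed form.

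The main obstacle is making the coupling rigorous in the presence of random orderings: Hoover's classical theorem is stated for i.i.d.\ uniform arrays, and one must either extend the coupling argument to the mixed setting or first encode the orderings into uniform variables via Borel isomorphism. A more direct alternative, avoiding the full strength of uniqueness, is to build the joint representation inductively on arity: use the conditional distribution of $\mathfrak{Z}^*|_s$ given $\{\mathfrak{Z}^*|_{s'}\}_{s'\subsetneq s}$ supplied by the factoring representation, together with the conditional distribution of each $R_j^{\mathfrak{X}^*}(\vec x)$ given $\mathfrak{Z}^*$ and the relevant lower-arity data, invoking the Borel isomorphism theorem to produce measurable selectors at each arity and so realize both sides of the representation as Borel functions of a single coherent family of uniforms and orderings.
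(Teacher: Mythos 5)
Your proposal is correct and follows essentially the same route as the paper's proof: both take the factoring representation of $\mathfrak{M}^*$ from Lemma \ref{thm:nice_rep}, invoke the essential uniqueness of Aldous--Hoover representations (Kallenberg, Theorem 7.28) to obtain a level-preserving measure-preserving coupling of the two representations of the $\mathfrak{Z}^*$-marginal, transport the joint representation along it, and then reabsorb the auxiliary randomness into a single uniform and ordering per level (the paper does this via the Coding Lemma together with an encoding of the discrepancy between the two random orderings as a map on permutations --- precisely the fix you flag as the ``main obstacle'').
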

\begin{claimproof}
Consider the functions $v_1,\ldots,v_r$ generating $\mathfrak{M}^*$ and suppose $\mathfrak{M}^*$ is generated from $v_1,\ldots,v_r$ using the i.i.d.\ Uniform$[0,1]$ random variables $(\xi_s)_{s\subseteq\Nb:\ |s|\leq k}$ and independent uniform random orderings $(\prec_s)_{s\subseteq\Nb:\ |s|\leq k}$.  Because the representation $v_1,\ldots,v_r$ factors through substructures, there are the corresponding functions $\hat v_1,\ldots,\hat v_r$ as in (\ref{eqn:factor}).

Because $\mathfrak{M}^*$ is exchangeable and $\mathfrak{M}^*\equalinlaw\mathfrak{Z}^*$, there is a measure-preserving transformation that takes the Aldous--Hoover representation of $\mathfrak{M}^*$ to that of $\mathfrak{Z}^*$, and vice versa.
In this direction, we take an additional system of variables---$(\xi'_s)$ i.i.d.\ Uniform$[0,1]$ and $(\prec'_s)$ independent uniformly chosen random orderings---and we let $T^d, U^d$ be a family of measure-preserving transformations such that if we set
\[\zeta_s=T^{|s|}((\xi_t)_{t\subseteq s},(\prec_{\vec y})_{\vec y\subseteq\vec x},(\xi'_t)_{t\subseteq s},(\prec'_{\vec y})_{\vec y\subseteq\vec x})\]
and
\[\sqsubset_s=U^{|s|}((\xi_t)_{t\subseteq s},(\prec_{\vec y})_{\vec y\subseteq\vec x},(\xi'_t)_{t\subseteq s},(\prec'_{\vec y})_{\vec y\subseteq\vec x}),\]
we have
\[v_i((\xi_s)_{s\subseteq\rng\vec x},(\prec_{\vec y})_{\vec y\subseteq\vec x})=g_i((\zeta_s)_{s\subseteq\rng \vec x},(\sqsubset_{\vec y})_{\vec y\subseteq\vec x})\]
almost surely.  (That such a measure-preserving transformation exists is a consequence of Kallenberg \cite[Theorem 7.28]{KallenbergSymmetries}.)

By the Coding Lemma \cite[Lemma 2.1]{Aldous1981}, we can encode the pairs of i.i.d.\ Uniform$[0,1]$ variables $(\xi_s,\xi'_s)$ by a single Uniform$[0,1]$ random variable by fixing a measure-preserving function $T':[0,1]\rightarrow[0,1]^2$.  We can further encode the \emph{difference} between $\prec_s$ and $\prec'_s$ by letting $F_s$ be the set of functions from permutations of $[1,|s|]$ to itself.  This gives us a Uniform$[0,1]$ random variable $\xi^\dagger_s$ and a measure-preserving function $V^{|s|}:[0,1]\rightarrow[0,1]\times[0,1]\times F_s$, whose components we write as $V_1^{|s|}$, $V_2^{|s|}$, $V_3^{|s|}$, respectively.
We then set $\xi_s=V_1^{|s|}(\xi^\dagger_s)$, $\xi_s'=V_2^{|s|}(\xi^\dagger_s)$, and $\prec'_s=[V_3^{|s|}(\xi^\dagger_s)](\prec_s)$.  Note that this is ``level preserving'' in the sense that $\xi_s,\xi'_s,\prec_s,\prec'_s$ depends only on $\xi^\dagger_s,\prec_s$ for every $s$.

We then define $g_i^{\dagger}$ and $h_j^{\dagger}$ by
\[ g^\dagger_i((\xi^\dagger_s)_{s\subseteq\rng\vec x},(\prec_{\vec y})_{\vec y\subseteq\vec x})=g_i((\zeta_s)_{s\subseteq\rng \vec x},(\sqsubset_{\vec y})_{\vec y\subseteq\vec x})\]
and
\[h^\dagger_j((\xi^\dagger_s)_{s\subseteq\rng\vec x},(\prec_{\vec y})_{\vec y\subseteq\vec x})=h_j((\zeta_s)_{s\subseteq\rng \vec x},(\sqsubset_{\vec y})_{\vec y\subseteq\vec x}),\]
where $\zeta_s,\sqsubset_s$ are obtained from $(\xi_s^{\dagger})$ and $(\prec_s)$ through the natural compositions of the $T^d,U^d,V^d$. 

By assumption, $\mathfrak{M}$ is ultrahomogeneous and has $n$-DAP for all $n\geq1$ and $\mathfrak{Z}^*$ is exchangeable and isomorphic to $\mathfrak{M}$ with probability 1.
Let $\mathfrak{Z}^{\dagger}$ be the structure generated by the $g^{\dagger}_i$ so that $\mathfrak{Z}^{\dagger}\equalinlaw\mathfrak{Z}^*$.  We can define functions
\[\hat g^\dagger_i(\{\mathfrak{Z}^\dagger|_s\}_{s\subsetneq\rng\vec x},\xi^\dagger_{\rng\vec x},\prec_{\vec x})
=\hat v_i(\{\mathfrak{Z}^\dagger|_s\}_{s\subsetneq\rng\vec x},V_1^{|\vec x|}(\xi^\dagger_{\rng\vec x}),\prec_{\vec x}),\]
and the functions $\hat g^\dagger_i$ show that the representation given by $g^\dagger_i$ factors through substructures as well since
\begin{align*}
  g^\dagger_i((\xi^\dagger_s)_{s\subseteq\rng\vec x},(\prec_{\vec y})_{\vec y\subseteq\vec x})
&=g_i((\zeta_s)_{s\subseteq\rng \vec x},(\sqsubset_{\vec y})_{\vec y\subseteq\vec x})\\
&=v_i((\xi_s)_{s\subseteq\rng\vec x},(\prec_{\vec y})_{\vec y\subseteq\vec x})\\
&=\hat v_i(\{\mathfrak{Z}^\dagger|_s\}_{s\subsetneq\rng\vec x},\xi_{\rng\vec x},\prec_{\vec x})\\
&=\hat g^\dagger_i(\{\mathfrak{Z}^\dagger|_s\}_{s\subsetneq\rng\vec x},\xi^\dagger_{\rng\vec x},\prec_{\vec x}).
\end{align*}
(In passing from the second to third lines above, we once again use the fact that $\mathfrak{Z}^*$ factors through substructures.)
Therefore $\mathfrak{Z}^\dagger$ factors through substructures and we may replace $g_i,h_i$ with $g^\dagger_i,h^\dagger_i$.
\end{claimproof}

For any set $S\subseteq\mathbb{N}$, we define $M(\{\zeta_s\}_{s\subseteq S},(\sqsubset_s)_{s\subseteq S})$ to be the $\mathcal{L}$-structure $\mathfrak{S}$ with $|\mathfrak{S}|=S$ and $\vec x\in\mathcal{Q}^{\mathfrak{S}}_i$ if and only if $g_i((\xi_s)_{s\subseteq\vec x},(\sqsubset_{\vec y})_{\vec y\subseteq\vec x})=1$.  Conversely, given $\mathfrak{S}$ and $(\sqsubset_s)_{s\subseteq\mathfrak{S}}$, we can consider the set
\[\Theta(\mathfrak{S},(\sqsubset_s))=\{\{\zeta_s\}\mid M(\{\zeta_s\}_{s\subseteq S},(\sqsubset_s)_{s\subseteq S})=\mathfrak{S}\}.\]
We have chosen $\mathfrak{M}^*$ so that it factors through substructures, implying that $\Theta(\mathfrak{S},(\sqsubset_s)_{s\subseteq S})$ is a cube, i.e., for suitable functions $\Theta_s(\mathfrak{S},\sqsubset_s)$ it has the form $\prod_{s\subseteq S}\Theta_s(\mathfrak{S}|_s,\sqsubset_s)$ .  This is because whether or not $M(\{\zeta_s\}_{s\subseteq S},(\sqsubset_s)_{s\subseteq S})=\mathfrak{S}$ depends only on $M(\{\zeta_t\}_{t\subseteq s},(\sqsubset_t)_{t\subseteq s})$ for $s\subsetneq S$ and the values $\zeta_S,\prec_S$.

In particular, if $\mathfrak{S}\in\age(\mathfrak{M})$, then there is a conditional measure on $\Theta_s(\mathfrak{S}|_s,\sqsubset_s)$, and we may define a measure-preserving function $\theta_s^{\mathfrak{S},(\sqsubset_t)_{t\subseteq s}}:[0,1]\rightarrow\Theta_s(\mathfrak{S}|_s,\sqsubset_s)$.  We may now define
\[f_j(\mathfrak{S},(\xi_s)_{s\subseteq\rng \vec x},(\sqsubseteq_{\vec y})_{\vec y\subseteq\vec x})=h_j((\theta_s^{\mathfrak{S}|_s,(\sqsubseteq_t)_{t\subseteq s}}(\xi_s))_{s\subseteq\rng \vec x},(\sqsubseteq_{\vec y})_{\vec y\subseteq\vec x}).\]

We now return to the original structure $\mathfrak{M}$.  Let $\mathfrak{X}^{**}$ be the structure generated by the $f_j$ using $\mathfrak{M}$; that is,
\[\vec x\in R_j^{\mathfrak{X}^{**}}\quad\Longleftrightarrow\quad f_j(\mathfrak{M}|_{\rng\vec s},(\xi_s)_{s\subseteq\rng \vec x},(\sqsubseteq_{\vec y})_{\vec y\subseteq\vec x}).\]
Observe that for any $S\subseteq\mathbb{N}$, $\mathbb{P}(\mathfrak{X}^{**}|_S=\mathfrak{T})$ is equal to $\mathbb{P}(\mathfrak{X}^*|_S=\mathfrak{T} \mid \mathfrak{Z}^*|_S=\mathfrak{M}|_S)$ (that is, the conditional probability that $\mathfrak{X}^*|_S=\mathfrak{T}$, given that $\mathfrak{Z}^*|_S=\mathfrak{M}|_S$).
Recall that $\mathfrak{X}$ is $\mathfrak{M}$-exchangeable and $\mathfrak{X}^*$ is chosen to be relatively exchangeable with respect to $\mathfrak{Z}^*$ (and $\mathfrak{Z}^*$ is exchangeable and isomorphic to $\mathfrak{M}$ with probability 1).
By our choice of $\mathfrak{X}^*$, we have
\[\mathbb{P}(\mathfrak{X}^*|_S=\mathfrak{T}\mid \mathfrak{Z}^*|_S=\mathfrak{M}|_S);\]
whence, $\mathfrak{X}\equalinlaw\mathfrak{X}^{**}$.
\end{proof}

\begin{proof}[Proof of Lemma \ref{thm:nice_dissociated}]
Suppose that $\mathfrak{X}$ is dissociated.  Then, from the previous theorem, we have the structure $\mathfrak{X}^{**}\equalinlaw\mathfrak{X}$ generated by the $f_j$.  For any $\xi_\emptyset$, define
\[f_j^{\xi_\emptyset}(\mathfrak{S},(\xi_s)_{\emptyset\subsetneq s\subseteq\rng\vec x},(\sqsubseteq_{\vec y})_{\vec y\subseteq\vec x})=
f_j(\mathfrak{S},(\xi_s)_{s\subseteq\rng \vec x},(\sqsubseteq_{\vec y})_{\vec y\subseteq\vec x}).\]
For each $\xi_\emptyset$, the functions $f_j^{\xi_\emptyset}$ generate a structure $\mathfrak{X}^{\xi_\emptyset}$.  We claim that for almost every $\xi_\emptyset$, $\mathfrak{X}^{\xi_\emptyset}\equalinlaw\mathfrak{X}^{**}$.  It suffices to show that, for each $S$ and each $\mathfrak{S}$, for almost every $\xi_\emptyset$, $\mathbb{P}(\mathfrak{X}^{\xi_\emptyset}|_S=\mathfrak{S})=\mathbb{P}(\mathfrak{X}^{**}|_S=\mathfrak{S})$.

Toward a contradiction, suppose that for some $S$ and $\mathfrak{S}$, there are positive measure of $\xi_\emptyset$ such that $\mathbb{P}(\mathfrak{X}^{\xi_\emptyset}|_S=\mathfrak{S})\neq \mathbb{P}(\mathfrak{X}^{**}|_S=\mathfrak{S})$.  
Using the ultrahomogeneity of $\mathfrak{M}$, we can find $T$ with $S\cap T=\emptyset$ so that $\mathfrak{M}|_S$ is isomorphic to $\mathfrak{M}|_T$ and, therefore, so $\mathbb{P}(\mathfrak{X}^{\xi_\emptyset}|_T=\mathfrak{S})=\mathbb{P}(\mathfrak{X}^{\xi_\emptyset}|_S=\mathfrak{S})$ and $\mathbb{P}(\mathfrak{X}^{**}|_T=\mathfrak{S})=\mathbb{P}(\mathfrak{X}^{**}|_S=\mathfrak{S})$.  
However, by the shared dependence of $\mathfrak{X}^{\xi_0}|_S$ and $\mathfrak{X}^{\xi_0}|_T$ on $\xi_0$, the events $\{\mathfrak{X}^{**}|_T=\mathfrak{S}\}$ and $\{\mathfrak{X}^{**}|_S=\mathfrak{S}\}$ are positively correlated, implying that $\mathbb{P}(\mathfrak{X}^{**}|_T=\mathfrak{S}\mid \mathfrak{X}^{**}|_S=\mathfrak{S})>\mathbb{P}(\mathfrak{X}^{**}|_T=\mathfrak{S})$.
But this contradicts the dissociation of $\mathfrak{X}^{**}$.

\end{proof}

\subsection{Sufficiently Large Product Algebras}

In this subsection we give a technical result showing that a Borel function $f:[0,1]^k\rightarrow[0,1]$ is measurable with respect to a smaller $\sigma$-algebra $\mathcal{B}^k$ where the only sets in $\mathcal{B}$ are ones which can be defined from $f$ in a certain way.

\begin{definition}
  Let $\{f_j\}$ be a countable collection of functions on $[0,1]^{k_j}$.  We say $\tilde v:[0,1]^d\rightarrow [-1,1]$ is \emph{generated by the $f_j$} if there exists a function $v$, values $j_1,\ldots,j_r$, and tuples $\vec c^i$ for $i\leq r$ such that
\[\tilde v(\zeta_1,\ldots,\zeta_d)=v(f_{j_1}(\zeta_{c^1_1},\ldots,\zeta_{c^1_{k_{j_1}}}),\ldots,f_{j_r}(\zeta_{c^r_1},\ldots,\zeta_{c^r_{k_{j_r}}}))\]
\end{definition}

\begin{theorem}\label{thm:subalgebra}
Let $\{f_j\}$ be a countable collection of Borel-measurable functions on $[0,1]^{k_j}$.  Suppose we have fixed a measure $\mu$ on $\mathcal{B}$.  Then there is a $\sigma$-algebra $\mathcal{B}$ such that:
  \begin{itemize}
  \item Each $f_j$ is measurable with respect to $\mathcal{B}^{k_j}$,
  \item $\mathcal{B}$ is generated by sets of the form $\{\zeta\mid \tilde v(\zeta,\zeta_2,\ldots,\zeta_d)\in I\}$, where $I$ is an interval and $\tilde v$ is generated by the $f_j$.
  \end{itemize}
Furthermore,  if for each $d$ we have a set $B_{d-1}\subseteq\mathcal{B}^{d-1}$ with $\mu(B_{d-1})=0$, we may choose the generating sets $\{\zeta\mid \tilde v(\zeta,\zeta_2,\ldots,\zeta_d)\in I\}$ so that $(\zeta_2,\ldots,\zeta_d)\not\in B_{d-1}$.
\end{theorem}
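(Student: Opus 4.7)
The plan is to define $\mathcal{B}$ to be the $\sigma$-algebra on $[0,1]$ generated by all sets of the form $\{\zeta : \tilde v(\zeta, z_2, \ldots, z_d) \in I\}$ with $\tilde v$ generated by the $f_j$'s, $(z_2, \ldots, z_d) \in [0,1]^{d-1}$, and $I$ an interval. The second bullet of the theorem is then immediate by construction, so all the work concerns the measurability bullet.

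The starting observation for measurability is that each $f_j$ is itself among the allowed $\tilde v$'s (take $v$ to be the identity and the index tuple $\vec c$ canonical), so for any fixed position $i \leq k_j$, fixed values $z_1, \ldots, z_{i-1}, z_{i+1}, \ldots, z_{k_j}$, and any interval $I$, the section $\{\zeta : f_j(z_1, \ldots, z_{i-1}, \zeta, z_{i+1}, \ldots, z_{k_j}) \in I\}$ lies in $\mathcal{B}$ by definition. Hence every coordinate section of $f_j^{-1}(I)$ is in $\mathcal{B}$. The main task is to promote this ``sectionwise'' membership to actual $\mathcal{B}^{k_j}$-measurability of $f_j^{-1}(I)$, since separate measurability does not in general imply joint measurability.

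My strategy for this promotion exploits the freedom in the definition of $\tilde v$: since arbitrary Borel compositions $v(f_{j_1}(\cdot), \ldots, f_{j_r}(\cdot))$ are permitted, we can build test functions of a single free variable $\zeta$ that encode joint information about $f_j$ at the frozen coordinates. Fixing a countable family of such test functions, dense (in say the $L^2$-sense with respect to $\mu$) among all Borel functions extractable from the $f_j$'s, the associated map $\Phi : [0,1] \to [0,1]^{\mathbb{N}}$ satisfies $\mathcal{B} = \Phi^{-1}(\text{Borel})$ and $f_j$ should factor through $\Phi^{k_j}$ modulo $\mu^{k_j}$-null sets. A standard monotone-class argument then upgrades the sectionwise membership to full $\mathcal{B}^{k_j}$-measurability of $f_j^{-1}(I)$ for every rational interval $I$, yielding the first bullet.

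For the supplementary clause, I observe that only countably many parameter tuples $(z_2, \ldots, z_d)$ are needed at each arity $d$ to generate $\mathcal{B}$, so each can be pushed off the given $\mu$-null set $B_{d-1}$ by a union bound while still generating the same $\sigma$-algebra. The main obstacle is making the ``factoring through $\Phi^{k_j}$'' rigorous: one must verify that the chosen countable family of compositions is genuinely rich enough to recover each $f_j$ up to null sets, and this depends on a delicate interplay between the countable parameter choices and the richness of the allowed Borel compositions $v$; this is where most of the technical work of the proof will lie.
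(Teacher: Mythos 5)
There is a genuine gap at exactly the point you flag as ``where most of the technical work will lie'': the promotion from sectionwise membership to joint $\mathcal{B}^{k_j}$-measurability is the entire content of the theorem, and the mechanism you propose for it does not work. The class of sets all of whose coordinate sections lie in $\mathcal{B}$ is indeed a $\sigma$-algebra containing the measurable rectangles, but it is in general \emph{strictly larger} than $\mathcal{B}^{k_j}$, so a monotone-class argument starting from ``every section of $f_j^{-1}(I)$ is in $\mathcal{B}$'' cannot conclude $f_j^{-1}(I)\in\mathcal{B}^{k_j}$. (A standard illustration: if $A$ is the diagonal of $[0,1]^2$, its sections generate the countable--cocountable $\sigma$-algebra $\mathcal{C}$, yet $A\notin\mathcal{C}\otimes\mathcal{C}$; one must pass to statements modulo $\mu$-null sets, and even then a genuine argument is required.) Likewise, the assertion that ``$f_j$ should factor through $\Phi^{k_j}$ modulo null sets'' is precisely the claim to be proved, and no argument is given for why a countable dense family of compositions is rich enough to recover $f_j$; your closing sentence concedes this. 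Separately, defining $\mathcal{B}$ from \emph{all} parameter tuples $(z_2,\ldots,z_d)$ produces a possibly non-countably-generated $\sigma$-algebra, and the claim that ``only countably many parameter tuples are needed'' again presupposes the conclusion rather than establishing it; the null-set avoidance in the ``furthermore'' clause must be arranged at the moment each generator is selected, not retrofitted.

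The paper closes this gap with an energy-increment argument that your outline does not anticipate. One reduces to a single $f$ on $[0,1]^k$ and builds $\mathcal{B}$ as an increasing union of finitely generated $\sigma$-algebras. The key lemma is that if $f$ is not (a.e.) measurable with respect to $\mathcal{B}^k$, then writing $f'=f-\mathbb{E}(f\mid\mathcal{B}^k)$ and iterating Cauchy--Schwarz in each coordinate (a Gowers box-norm style inequality) produces, for a positive-measure set of frozen parameters $\vec\zeta^X$, finitely many sets built from $f$ whose adjunction to $\mathcal{B}$ strictly increases $\|\mathbb{E}(f\mid\mathcal{B}^k)\|_{L^2}$; the positive-measure freedom in choosing $\vec\zeta^X$ is what lets one avoid the prescribed null sets $B_{d-1}$. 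Choosing at each stage a ``good extension'' capturing at least half the available increment and using that $\|f\|_{L^2}$ bounds the total energy forces the increments to vanish, so $f$ is measurable with respect to the limit $\sigma$-algebra. Some quantitative device of this kind appears to be unavoidable, and it is absent from your proposal.
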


\begin{proof}

  It clearly suffices to show this when the collection of function $\{f_j\}$ consists of a single function $f$ on $[0,1]^k$, since if there are multiple functions, we can simply take the union of the corresponding $\sigma$-algebras.  Without loss of generality, we assume the sets $B_d$ are closed under permutations and that for any $i$, if $\vec\zeta\not\in B_d$ then the set of $\vec\zeta'$ such that $(\vec\zeta,\vec\zeta')\in B_{d+i}$ has measure $0$.

If $k=1$ this is trivial, so assume $k>1$.

We say a set is \emph{built from $f$} if it has the form $\{\zeta\mid \tilde v(\zeta,\zeta_2,\ldots,\zeta_d)\in I\}$ with $\tilde v$ generated by $f$, $I$ an interval, and $(\zeta_2,\ldots,\zeta_d)\not\in B_{d-1}$.  We build $\mathcal{B}$ in countably many stages, beginning with the trivial $\sigma$-algebra $\mathcal{B}_0$, with each stage finitely generated by sets built from $f$.

Suppose we have a $\sigma$-algebra $\mathcal{B}$ generated by finitely many sets built from $f$.  We call $\mathcal{B}'$ a \emph{good extension} of $\mathcal{B}$ if:
\begin{itemize}
\item $\mathcal{B}\subseteq\mathcal{B}'$,
\item $\mathcal{B}'$ is generated by $\mathcal{B}\cup\{B_1,\ldots,B_{k'}\}$, where $k'\leq k$ and each $B_i$ is built from $f$,
\item $||\mathbb{E}(f\mid(\mathcal{B}')^k)||_{L^2}>||\mathbb{E}(f\mid\mathcal{B}^k)||_{L^2}$.
\end{itemize}

We claim that if $f$ is not measurable with respect to $\mathcal{B}^k$ then a good extension exists.  Given $\mathcal{B}$ where $f$ is not measurable with respect to $\mathcal{B}^k$, let $f'=f-\mathbb{E}(f\mid\mathcal{B}^k)$
so
\[0<\int [f'(\zeta_1,\ldots,\zeta_k)]^2 d\mu^k.\]

$\mathbb{E}(f\mid\mathcal{B})$ has the form $\sum_i\lambda_i\chi_{\prod_{j\leq k}B_{i,j}}$, where the $\prod_{j\leq k}B_{i,j}$ are rectangles from $\mathcal{B}^k$.  Since each $B_{i,j}$ is a finite union of finite intersections of sets built from $f$, we may expand all these unions and intersections and, without loss of generality, assume that $B_{i,j}$ itself is built from $f$; and since $B_{i,j}=\{\zeta\mid\tilde \nu_{i,j}(\zeta,\vec\zeta_{i,j})\in I\}$ for some $\tilde \nu_{i,j}$, we may define $\tilde \nu'_{i,j}(\zeta,\vec\zeta_{i,j})$ to be the characteristic function of this set.  So $f'(\zeta_1,\ldots,\zeta_k)$ has the form
\[f(\zeta_1,\ldots,\zeta_k)-\sum_i\lambda_i\prod_{j\leq k}\tilde \nu'_{i,j}(\zeta_i,\vec \zeta_{i,j}).\]

By Cauchy--Schwarz, we have
\begin{align*}
0
&<\int [f'(\zeta_1,\ldots,\zeta_k)]^2 d\mu^k\\
&=\int \int [f'(\zeta_1,\ldots,\zeta_k)]^2 d\mu^{k-1}d\mu(\zeta_1)\\
&\leq \sqrt{\int \left(\int f'(\zeta_1,\zeta_2\ldots,\zeta_k)d\mu^{k-1}\right)^2 d\mu(\zeta_1)}\\
&=\sqrt{\int \int f'(\zeta_1,\zeta_2^0\ldots,\zeta^0_k)f'(\zeta_1,\zeta_2^1,\ldots,\zeta^1_k) d\mu^{2k-1}(\vec{\zeta}^{0},\vec{\zeta}^1)d\mu(\zeta_1) }.\\
\end{align*}
Iterating this process for each coordinate $i<k$ and raising to the $2^k$, we have 
\[0<\int \prod_{\tau:[1,k]\rightarrow\{0,1\}} f'(\zeta_1^{\tau\upharpoonright ([1,k]\setminus\{1\})},\ldots,\zeta^{\tau\upharpoonright ([1,k]\setminus\{k\})}_k) d\mu^{k2^{k-1}}.\]
In this integral, for each $i\in[1,k]$, we have a copy of $\zeta_i$ for each function $\tau:([1,k]\setminus\{i\})\rightarrow \{0,1\}$.  Observe that if $\tau\neq\tau'$ in the product, there is at most one $i$ such that $\zeta_i^{\tau\upharpoonright ([1,k]\setminus\{i\})}=\zeta_i^{\tau'\upharpoonright ([1,k]\setminus\{i\})}$---if there is any such $i$ then $\tau(j)=\tau'(j)$ for $j\neq i$; if we also had $\tau(i)=\tau'(i)$ then we would have $\tau=\tau'$.  It is also easy to see that each $\zeta^\sigma_i$ appears exactly twice in the product.


The important feature of this product is that each term has the form
\[f'(\zeta_1,\ldots,\zeta_k),\]
where each $\zeta_i$ is chosen from one of $2^{k-1}$ copies.  We have a distinguished choice $\zeta^{\vec 0}_i$ for each $i$: one element of our product is $f'(\zeta^{\vec 0}_1,\ldots,\zeta^{\vec 0}_k)$ and any other copy of $f'$ includes at most one $\zeta^{\vec 0}_i$ in its list of inputs.

Therefore, we can rewrite this product
\[0<\iint f'(\zeta^{\vec 0}_1,\ldots,\zeta^{\vec 0}_k) \prod_i f'(\zeta^{\vec 0}_i,\vec\zeta^X_i) g(\vec\zeta^X) d\mu^k d\mu^{k2^{k-1}-k}(\vec\zeta^X),\]
separating all the other variables into $\vec\zeta^X$.  In particular, there is a set of $\vec\zeta^X$ of positive measure such that
\[0<|\int f'(\zeta^{\vec 0}_1,\ldots,\zeta^{\vec 0}_k) \prod_i f'(\zeta^{\vec 0}_i,\vec\zeta^X_i) d\mu^k|.\]

When we expand out $f'$ in the product $\prod_i f'(\zeta^{\vec 0}_i,\vec\zeta^X_i)$, we get a large sum of products of the form
\[\prod_{i} \tilde \nu^*_{i}(\zeta_i,\vec\zeta^X_i,\vec\zeta^Y_i)\]
where the $\vec\zeta^X_i$ as in the previous equation and the $\vec\zeta^Y_i$ are the fixed parameters appearing in the construction of the sets in $\mathcal{B}$.  The level sets of this sum can be approximated by unions of sets of the form $\prod_i B_i$ where each $B_i$ has the form $\{\zeta_i\mid \nu^*_i(\zeta_i,\vec\zeta^X_i,\vec\zeta^Y_i)\}$.  Therefore there must be some sets $B_i$ of this form so that the set of $\vec\zeta^X_i$ making $|\int_{\prod_i B_i} f'd\mu^k|>0$ has positive measure.  Therefore we can choose parameters $\vec\zeta^X_i$ so that $(\vec\zeta^X_i,\vec\zeta^Y_i)\not\in B_d$.  Taking $\mathcal{B}'$ to be the $\sigma$-algebra generated by $\mathcal{B}\cup\{B_1,\ldots,B_k\}$, $||\mathbb{E}(f\mid\mathcal{B}')||_{L^2}>||\mathbb{E}(f\mid\mathcal{B})||_{L^2}$.  This shows the existence of good extensions.

Let $\mathcal{B}_0$ be the trivial $\sigma$-algebra.  Given $\mathcal{B}_i$, if $f$ is not measurable with respect to $\mathcal{B}_i^k$, we choose $\mathcal{B}_{i+1}$ among all good extensions of $\mathcal{B}$ so that whenever $\mathcal{B}'$ is a good extension of $\mathcal{B}_i$,
\[||\mathbb{E}(f\mid\mathcal{B}')||_{L^2}-||\mathbb{E}(f\mid\mathcal{B}_{i+1})||_{L^2}<2(||\mathbb{E}(f\mid\mathcal{B}_{i+1})||_{L^2}-||\mathbb{E}(f\mid\mathcal{B}_i)||_{L^2}).\]
(In other words, $\mathcal{B}_{i+1}$ contains at least half as much information as any other good extension.)  Let $\epsilon_i=||\mathbb{E}(f\mid\mathcal{B}_{i+1})||_{L^2}-||\mathbb{E}(f\mid\mathcal{B}_i)||_{L^2}$.

We let $\mathcal{B}=\bigcup_i\mathcal{B}_{i+1}$.  Observe that $||f||_{L^2}\geq\sum_i\epsilon_i$, so $\epsilon_i\rightarrow 0$.  In particular, if $f$ were not measurable with respect to $\mathcal{B}$, we could find a good extension $\mathcal{B}'\supsetneq\mathcal{B}$ with $||\mathbb{E}(f\mid\mathcal{B}')||_{L^2}\geq||\mathbb{E}(f\mid\mathcal{B})||_{L^2}+\epsilon$.  But for some $i$, $\epsilon_i<\epsilon/2$, contradicting the choice of $\mathcal{B}_{i+1}$.
\end{proof}

\subsection{Proof of Theorem \ref{thm:main1}}\label{sec:proof_main}

\begin{theorem}\label{thm:canonical rep}
Let $\mathcal{L},\mathcal{L}'$ be signatures and $\mathfrak{M}$ be an ultrahomogeneous $\mathcal{L}$-structure whose age has DAP.
Suppose $\mathfrak{X}=(\Nb,\mathcal{X}_1,\ldots,\mathcal{X}_{r'})$ is a dissociated $\mathfrak{M}$-exchangeable $\mathcal{L}'$-structure.
Then there are age compatible Borel functions $f_1,\ldots,f_{r'}$ so that the age indexed $\mathcal{L}'$-structure $\{\mathfrak{Y}^{\mathfrak{S}}\}$ generated by $f_1,\ldots,f_{r'}$ satisfies $\mathfrak{Y}^{\mathfrak{S}}\equalinlaw\mathfrak{X}^{\mathfrak{S}}$ for all $ {\mathfrak{S}}\in\age(\mathfrak{M})$.
\end{theorem}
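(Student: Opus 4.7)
The plan is to adapt the argument for Theorem \ref{thm:nice1} to the weaker hypothesis that $\age(\mathfrak{M})$ has only $2$-DAP (i.e.\ ordinary disjoint amalgamation), with the dissociation hypothesis on $\mathfrak{X}$ playing the role that $n$-DAP played there. The starting point is Theorem \ref{AFP_theorem}: since $\mathfrak{M}$ is ultrahomogeneous and $\age(\mathfrak{M})$ has DAP, there is a dissociated exchangeable random $\mathcal{L}$-structure $\mathfrak{Z}^*$ almost surely isomorphic to $\mathfrak{M}$, of the special form
\[\vec x\in Q_i^{\mathfrak{Z}^*}\quad\Longleftrightarrow\quad u_i(\xi_{x_1},\ldots,\xi_{x_{\ar(Q_i)}})=1,\]
where $(\xi_x)_{x\in\Nb}$ are i.i.d.\ Uniform$[0,1]$. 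The crucial feature is that $\mathfrak{Z}^*$ uses only singleton randomness.

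Next, I would couple $\mathfrak{Z}^*$ with $\mathfrak{X}$ to form a jointly exchangeable $\mathcal{L}\cup\mathcal{L}'$-structure $(\mathfrak{Z}^*,\mathfrak{X}^*)$: sample $\mathfrak{Z}^*$, then (using Proposition \ref{prop:equiv} and the $\mathfrak{M}$-exchangeability of $\mathfrak{X}$) sample $\mathfrak{X}^*$ conditionally so that, for any isomorphism $\iota:\mathfrak{Z}^*\to\mathfrak{M}$, the pullback $\mathfrak{X}^{*\iota^{-1}}$ has the same law as $\mathfrak{X}$. Both marginals are dissociated, so $(\mathfrak{Z}^*,\mathfrak{X}^*)$ is dissociated and jointly exchangeable. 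Applying Theorem \ref{AH-general} produces Borel functions $g_i,h_j$ together with i.i.d.\ Uniform$[0,1]$ variables $(\zeta_s)_{1\leq|s|\leq k}$ and independent uniform random orderings $(\sqsubset_s)$ generating the joint structure, with no $\zeta_\emptyset$ dependence by dissociation.

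The heart of the proof is to reshape this joint Aldous--Hoover representation so that $g_i$ depends only on the singleton variables $(\zeta_{\{x\}})_{x\in\rng\vec x}$, aligning it with the AFP form of $\mathfrak{Z}^*$. Following the template of the proof of Theorem \ref{thm:nice1}, I would combine the two representations via Kallenberg's relative uniqueness theorem \cite[Theorem 7.28]{KallenbergSymmetries} and the Coding Lemma \cite[Lemma 2.1]{Aldous1981}, encoding pairs $(\xi_{\{x\}},\zeta_{\{x\}})$ into a single Uniform$[0,1]$ variable and absorbing any discrepancy between $(\prec_s)$ and $(\sqsubset_s)$ at the singleton level. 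Theorem \ref{thm:subalgebra} is the essential technical ingredient: it certifies that, after this reshaping, the functions $h_j$ describing $\mathfrak{X}^*$ remain measurable with respect to the restricted $\sigma$-algebra generated by the singleton data for $\mathfrak{Z}^*$ plus the fresh higher-arity randomness. In effect, this step ensures that the (random) choice of which copy of $\mathfrak{M}$ sits inside $\mathfrak{Z}^*$ does not affect the conditional distribution of $\mathfrak{X}^*$.

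Given this normalized joint representation, I would build $f_1,\ldots,f_{r'}$ as follows. For each $\mathfrak{S}\in\age(\mathfrak{M})$ with $|\mathfrak{S}|=[n]$, the set $\Theta(\mathfrak{S})\subseteq[0,1]^n$ of singleton tuples yielding $\mathfrak{S}$ has positive Lebesgue measure (because $\mathfrak{Z}^*|_{[n]}=\mathfrak{S}$ with positive probability for dissociated exchangeable $\mathfrak{Z}^*$ with age $\age(\mathfrak{M})$). Fix a Borel-measurable measure-preserving map $\theta^{\mathfrak{S}}:[0,1]^n\to\Theta(\mathfrak{S})$ and define
\[f_j(\mathfrak{S},(\xi_s)_{s\subseteq\rng\vec x},(\prec_{\vec y})_{\vec y\subseteq\vec x}):=h_j\bigl(\theta^{\mathfrak{S}}((\xi_{\{x\}})_{x\in\rng\vec x}),(\xi_s)_{|s|\geq 2},(\prec_{\vec y})_{\vec y\subseteq\vec x}\bigr),\]
using the transformed singleton values in place of the $\zeta_{\{x\}}$ and passing the higher-arity $\xi_s,\prec_{\vec y}$ through as the $\zeta_s,\sqsubset_{\vec y}$. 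By construction, the potential age-indexed structure $\{\mathfrak{Y}^{\mathfrak{S}}\}$ generated by $f_1,\ldots,f_{r'}$ realizes, at each $\mathfrak{S}$, the conditional law of $\mathfrak{X}^*|_{[n]}$ given $\mathfrak{Z}^*|_{[n]}=\mathfrak{S}$, which by our coupling is precisely $\mathfrak{X}^{\mathfrak{S}}$; since $\{\mathfrak{X}^{\mathfrak{S}}\}$ is an age-indexed structure, the $f_j$'s are automatically age compatible. The main obstacle is the reshaping step of the previous paragraph: the rest of the argument is essentially bookkeeping once one knows that the joint Aldous--Hoover representation can be normalized to keep the structural information about $\mathfrak{Z}^*$ confined to the singleton layer.
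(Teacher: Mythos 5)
Your outline follows the paper up through the joint Aldous--Hoover representation and the normalization making $\mathfrak{Z}^*$ depend only on singleton data, but the final step --- fixing a measure-preserving map $\theta^{\mathfrak{S}}:[0,1]^n\to\Theta(\mathfrak{S})$ and substituting $\theta^{\mathfrak{S}}((\xi_{\{x\}})_{x\in\rng\vec x})$ into $h_j$ --- is exactly the mechanism of the proof of Theorem \ref{thm:nice1}, and it breaks down under mere $2$-DAP. That mechanism works in Theorem \ref{thm:nice1} only because $n$-DAP lets one choose a representation of $\mathfrak{M}$ that factors through substructures, which makes $\Theta(\mathfrak{S},(\sqsubset_s))$ a product set $\prod_{s}\Theta_s(\mathfrak{S}|_s,\sqsubset_s)$; the reparametrization can then be done one coordinate at a time, so the reconstructed value of $\zeta_{\{x\}}$ is the same no matter which tuple $\vec x$ containing $x$ one is evaluating. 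With only $2$-DAP the event $\{M(\{\zeta_i\}_{i\in[n]})=\mathfrak{S}\}$ is in general not a product set, and the conditional law of $(\zeta_1,\ldots,\zeta_n)$ given this event is not a product measure (condition i.i.d.\ uniforms on producing a fixed equivalence relation, as in Example \ref{example:weak-rep}), so no coordinatewise $\theta^{\mathfrak{S}}$ can realize it. A genuinely joint $\theta^{\mathfrak{S}}$ is not available to you either: by the definition of a potential age indexed structure, $f_j$ may only read $(\xi_s)_{s\subseteq\rng\vec x}$; and even setting that aside, evaluating $\theta^{\mathfrak{S}}$ on $(\xi_{\{1\}},\xi_{\{2\}})$ for the tuple $(1,2)$ and on $(\xi_{\{1\}},\xi_{\{3\}})$ for the tuple $(1,3)$ produces two inconsistent reconstructions of $\zeta_{\{1\}}$, so the joint law of $\mathfrak{Y}^{\mathfrak{S}}$ across overlapping tuples is wrong even if each single-tuple marginal is right.

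The paper closes this gap differently, and this is where dissociation and Theorem \ref{thm:subalgebra} are actually used rather than merely cited. One chooses a sub-$\sigma$-algebra $\mathcal{B}$, generated by sets built from the $g_i$, with respect to which the $g_i$ are measurable, and splits each singleton variable as $\zeta_i=h(\eta_i,\xi_i)$ with $\eta_i$ carrying the $\mathcal{B}$-part and $\xi_i$ independent of it. A conditioning claim (resting on the fact that the law of $\mathfrak{X}^*|_s$ depends only on $\mathfrak{M}^*|_s$, together with dissociation) shows that conditioning on $\mathcal{B}$-measurable events does not change $\mathbb{P}(\mathfrak{X}^*|_s=\mathfrak{T}\mid \mathfrak{Z}^*|_s=\mathfrak{S})$, hence that almost every fixed value of $(\eta_1,\ldots,\eta_n)$ compatible with $\mathfrak{S}$ is ``typical.'' One then selects, by induction on initial segments, a single \emph{deterministic} value $\eta_{\mathfrak{S}\upharpoonright i}$ for each $i$ and defines $f_j$ by plugging $g(\eta_{\mathfrak{S}\upharpoonright i},\xi_i)$ in for $\zeta_i$. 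Because $\eta_{\mathfrak{S}\upharpoonright i}$ is a deterministic function of $\mathfrak{S}$ and $i$, the reconstructed singleton values are automatically consistent across all tuples --- precisely what your measure-preserving substitution cannot guarantee. (This is also why the resulting representation in Theorem \ref{thm:main1} depends on the whole initial segment $\mathfrak{S}|_{[\max\vec x]}$ rather than on $\mathfrak{S}|_{\rng\vec x}$.) To repair your argument, replace $\theta^{\mathfrak{S}}$ by this deterministic typical-value substitution and supply the conditioning claim.
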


Before giving the actual proof, we give a brief outline.  As in the proof of Theorem \ref{thm:nice1}, we will replace $\mathfrak{M}$ with an exchangeable representation and then apply Aldous--Hoover to obtain a representation $(\mathfrak{Z}^*,\mathfrak{X}^*)$ of the combined structure.  After a measure-preserving transformation, we will ensure that the representation of $\mathfrak{M}$ is ``random-free''---that is, depends only on the singleton data $\zeta_i$.

We will then apply Theorem \ref{thm:subalgebra} to decompose the singleton random variables $\zeta_i$ into two independent random variables $\eta_i$ and $\xi_i$ so that $\eta_i$ captures all the information needed to construct $\mathfrak{Z}^*$, while $\xi_i$ represents the remaining information in $\zeta_i$ which is needed to construct $\mathfrak{X}^*$.

The representation we construct will depend on the random data $\xi_i$.  Given the $\xi_i$ and a $\mathfrak{S}\in\age(\mathfrak{M})$, we use $\mathfrak{S}$ to ``guess'' what the values $\eta_i$ might have been: specifically, we choose a ``typical'' sequence of values $\eta_i$ which would have caused $\mathfrak{Z}^*|_S=\mathfrak{S}$.  Given $\eta_i$ and $\xi_i$, we can reconstruct the value $\zeta_i$, which is the data needed to construct $\mathfrak{X}^*|_S$.

\begin{proof}
By Ackerman--Freer--Patel \cite{AFP}, there is an exchangeable probability measure $\mu$ concentrated on $\mathcal{L}$-structures $\mathfrak{M}'$ isomorphic to $\mathfrak{M}$ given in the form
\[\vec x\in R^{\mathfrak{M}'}_i\quad\Longleftrightarrow\quad v_i((\xi_j)_{j\in\rng\vec x}).\]
By assumption, there is an $\mathfrak{M}$-exchangeable measure $\mu'$ such that the restriction of $\mathfrak{X}'\sim\mu'$ to $S$ depends only on $\mathfrak{M}|_S$, for every $S\subseteq\mathbb{N}$.

As sketched in Section \ref{sketch}, we can put the exchangeable structure $\mathfrak{M}'$ and an $\mathfrak{M}'$-exchangeable structure $\mathfrak{X}'$ together to obtain an exchangeable probability measure on $\mathcal{L}\cup\mathcal{L}'$-structures $(\mathfrak{Z}^*,\mathfrak{X}^*)$.  
We write $\mathfrak{M}^*$ for the $\mathcal{L}$-structure corresponding to $\mathfrak{Z}^*$, which is isomorphic to $\mathfrak{M}$ with probability 1.  By Aldous--Hoover, there exist functions $g_i, h_j$ and a collection $(\zeta_{s})_{s\subseteq\Nb:\ |s|\leq k}$ of i.i.d.\ Uniform$[0,1]$ random variables and $(\prec_s)_{s\subseteq\Nb:\ |s|\leq k}$ uniform random orderings so that
\begin{itemize}
\item $\vec x\in Q_i^{\mathfrak{Z}^*}\quad\Longleftrightarrow\quad g_i((\zeta_s)_{s\subseteq\rng\vec x,|s|>0},(\prec_{\vec y})_{\vec y\subseteq\vec x})$,
\item $\vec x\in R_j^{\mathfrak{X}^*}\quad\Longleftrightarrow\quad h_{j}((\zeta_{ s})_{ s\subseteq\rng\vec x,|s|>0},(\prec_{\vec y})_{\vec y\subseteq\vec x})$, and
\item for any $V\subseteq\mathbb{N}$, the distribution of $\mathfrak{X}^*|_{V}$ given that $\mathfrak{M}^*|V={\mathfrak{S}}$ is the same as the distribution of $\mathfrak{X}^{ {\mathfrak{S}}}$.
\end{itemize}
As in the proof of Theorem \ref{thm:nice1}, we may apply a measure-preserving transformation so that our representation of $\mathfrak{Z}^*$ has the same form as $\mu$---that is, depends only on singletons---so we may assume without loss of generality that
\[\vec x\in Q_i^{\mathfrak{Z}^*}\quad\Longleftrightarrow\quad g_i((\zeta_j)_{j\in\rng\vec x})=1.\]

For any $S\subseteq\mathbb{N}$, we define $M(\{\zeta_n\}_{n\in S})$ to be the $\mathcal{L}$-structure $\mathfrak{S}$ with $|\mathfrak{S}|=S$ and $\vec x\in\mathcal{Q}_i^{\mathfrak{S}}$ if and only if $g_i((\zeta_j)_{i\in\rng\vec x})=1$.   We intend to use Theorem \ref{thm:subalgebra} to choose a $\sigma$-algebra $\mathcal{B}$ so that each $g_i$ is measurable with respect to $\mathcal{B}^{k_i}$ and $\mathfrak{X}^*|_S$ is independent of $\mathcal{B}$ after conditioning on $M(\{\zeta_n\}_{n\in S})$.

We write $Z_{s,\mathfrak{S}}$ for the event $\{M(\{\zeta_n\}_{n\in s})=\mathfrak{S}\}$ and $X_{s,\mathfrak{T}}$ for the event $\{\mathfrak{X}^*|_s=\mathfrak{T}\}$.

\begin{claim}
Let $s\subseteq\mathbb{N}$ by finite.
For each $n\in s$, let $\tilde v_n$ be generated by the $f_i$, let a rectangle $I$ in $[0,1]^{|s|}$ be given, and let
\[V_{\{\tilde v_n\},I}(\{\vec\zeta^n\}_{n\in S})=\{\{\zeta_n\}_{n\in s}\mid \prod_{n\in s}\tilde v_n(\zeta_n,\vec\zeta^n)\in I\}.\]

Then the set of $\vec\zeta^n$ such that
\[\mathbb{P}(X_{s,\mathfrak{T}}\mid V_{\{\tilde v_n\},I}(\{\vec\zeta^n\}_{n\in s})\text{ and }Z_{s,\mathfrak{S}})\neq\mathbb{P}(T_{s,\mathfrak{T}}\mid Z_{s,\mathfrak{S}})\]
has measure $0$.
\end{claim}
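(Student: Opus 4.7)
The claim is a conditional-independence statement: for almost every parameter tuple $\{\vec\zeta^n\}_{n\in s}$, the event $V_{\{\tilde v_n\},I}(\{\vec\zeta^n\})$ is independent of $X_{s,\mathfrak{T}}$ given $Z_{s,\mathfrak{S}}$.  My plan is to exploit two features: the rectangular decomposition of $V$ forced by $I$ being a product, and the dissociation of the jointly exchangeable pair $(\mathfrak{Z}^*,\mathfrak{X}^*)$ (which follows from the dissociation of $\mathfrak{X}$ and the random-free representation of $\mathfrak{Z}^*$).

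Because $I=\prod_{n\in s} I_n$, the event factors as
\[V=\bigcap_{n\in s}\{\zeta_n:\tilde v_n(\zeta_n,\vec\zeta^n)\in I_n\},\]
so, once the parameters $\vec\zeta^n$ are fixed, each factor depends on only one of the singleton variables $\zeta_n$.  In particular $V$ involves none of the higher-arity variables $\zeta_t$ ($|t|\ge 2$) and is measurable with respect to $\sigma((\zeta_n)_{n\in s})$ jointly with the (independent) external parameters.  The conditioning event $Z_{s,\mathfrak{S}}$ is also measurable with respect to $\sigma((\zeta_n)_{n\in s})$, while the complicated event $X_{s,\mathfrak{T}}$ uses all $\zeta_t$ with $t\subseteq s$.

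My strategy is to argue by contradiction via averaging.  Suppose the bad set of parameters has positive measure, and regard the $\vec\zeta^n$'s as independent fresh randomness: by Fubini, it suffices to show that the discrepancy
\[D(\vec\zeta^n):=\mathbb{P}(X_{s,\mathfrak{T}}\cap V\mid Z_{s,\mathfrak{S}})-\mathbb{P}(X_{s,\mathfrak{T}}\mid Z_{s,\mathfrak{S}})\,\mathbb{P}(V\mid Z_{s,\mathfrak{S}})\]
has vanishing $L^2$-norm in $\vec\zeta^n$.  Using ultrahomogeneity and DAP of $\mathfrak{M}$, one can find pairwise disjoint copies $s_1,s_2,\ldots$ of $s$ in $\mathbb{N}$ with each $\mathfrak{Z}^*|_{s_k}$ isomorphic to $\mathfrak{S}$; by dissociation of $(\mathfrak{Z}^*,\mathfrak{X}^*)$ the restrictions $(\mathfrak{Z}^*|_{s_k},\mathfrak{X}^*|_{s_k})$ are jointly independent across $k$.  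Reproducing the event $V$ on each copy with the same parameters $\vec\zeta^n$ and applying a second-moment estimate in the Cauchy--Schwarz / Gowers style of the proof of Theorem~\ref{thm:subalgebra}, the product structure of $V$ combined with independence across copies forces $\int D(\vec\zeta^n)^2\,d\vec\zeta^n=0$.

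The main obstacle I anticipate is carrying out the second-moment estimate cleanly.  One must couple the external parameters $\vec\zeta^n$ across copies (so the events $V^{(k)}$ are correlated in a controlled way) while keeping the underlying variables $(\zeta_t)_{t\subseteq s_k}$ independent via dissociation.  This is precisely the kind of book-keeping that Theorem~\ref{thm:subalgebra}'s many-copy expansion (indexed by functions $\tau:[1,k]\to\{0,1\}$) is designed to handle, so I expect an analogous manipulation, applied now to the combined structure $(\mathfrak{Z}^*,\mathfrak{X}^*)$ rather than to a single function, to close the argument.
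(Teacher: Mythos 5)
There is a genuine gap: your argument never uses the hypothesis that the $\tilde v_n$ are \emph{generated by the} $f_i$, and without that hypothesis the claim is simply false. If $\tilde v_n$ were an arbitrary Borel function of $\zeta_n$, then conditioning on $\{\tilde v_n(\zeta_n,\vec\zeta^n)\in I_n\}$ could certainly bias the conditional law of $\mathfrak{X}^*|_s$ given $Z_{s,\mathfrak{S}}$, since the singleton variable $\zeta_n$ carries information about $\mathfrak{X}^*$ beyond what is recorded in $\mathfrak{Z}^*|_s$. The entire content of the claim is that events \emph{built from the structure functions} carry no such extra information, so any proof must exploit that $\tilde v_n(\zeta_n,\vec\zeta^n)$ factors through the values of the $g_i$ on tuples drawn from $\{n\}$ together with the indices carrying $\vec\zeta^n$. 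Your second-moment plan also attacks the wrong correlation: dissociation and disjoint copies $s_1,s_2,\ldots$ of $s$ control independence \emph{across} disjoint index sets, but the quantity $D(\vec\zeta^n)$ measures the correlation between $V$ and $X_{s,\mathfrak{T}}$ on the \emph{same} index set $s$; reproducing $V$ on independent copies gives no handle on that, and $\int D^2\,d\vec\zeta^n$ will not vanish for generic $\tilde v_n$.

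The paper's proof is instead a short conditioning argument. Realize the parameters $\vec\zeta^n$ as singleton variables indexed by a finite set $t\supseteq s$. Because each $\tilde v_n$ is generated by the $f_i$, the event $V_{\{\tilde v_n\},I}(\{\vec\zeta^n\})\cap Z_{s,\mathfrak{S}}$ is determined by the structure $M(\{\zeta_n\}_{n\in t})$, hence is a union of events $Z_{t,\mathfrak{S}'}$ with $\mathfrak{S}'|_s=\mathfrak{S}$. By the construction of $(\mathfrak{Z}^*,\mathfrak{X}^*)$, the conditional law of $\mathfrak{X}^*|_s$ given $Z_{t,\mathfrak{S}'}$ depends only on $\mathfrak{S}'|_s=\mathfrak{S}$, i.e.\ $\mathbb{P}(X_{s,\mathfrak{T}}\mid Z_{t,\mathfrak{S}'})=\mathbb{P}(\mathfrak{X}^{\mathfrak{S}}=\mathfrak{T})$ for every such $\mathfrak{S}'$; summing over $\mathfrak{S}'$ via the tower property collapses the conditional probability to $\mathbb{P}(X_{s,\mathfrak{T}}\mid Z_{s,\mathfrak{S}})$. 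No Cauchy--Schwarz or many-copy expansion is needed at this step (that machinery belongs to the proof of Theorem \ref{thm:subalgebra}, which is applied afterwards). If you want to salvage your outline, the missing move is precisely the identification of $V\cap Z_{s,\mathfrak{S}}$ as a union of structure events $Z_{t,\mathfrak{S}'}$ on a larger finite set.
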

\begin{claimproof}
The set $\bigcup_{n\in s}\{\zeta_n,\vec\zeta^n\}$ is a finite set of random variables.
We fix an enumeration $t\supseteq s$ of these variables so that if $n\neq n'$ then $\vec\zeta^n$ and $\vec\zeta^{n'}$ are mutually independent sets of random variables. 
Let $V_{\{\tilde v_n\},I}=\bigcup_{\{\vec\zeta^n\}}V_{\{\tilde v_n\},I}(\{\vec\zeta^n\})$ be the event on $\{\zeta_n\}_{n\in t}$ that there is some $\{\vec\zeta^n\}$ so that $\prod_{n\in t}\tilde v_n(\zeta_n,\vec\zeta_n)\in I$, i.e., $V_{\{\tilde v_n\},I}(\{\vec\zeta^n\})$ holds.  The event $V_{\{\tilde v_n\},I}\cap Z_{s,\mathfrak{S}}$ is determined by $M(\{\zeta_n\}_{n\in t})$ and implies that $M(\{\zeta_n\}_{n\in t})|_s\mathop{=}\mathfrak{S}$.  Therefore, $V_{\{\tilde v_n\},I}\cap Z_{s,\mathfrak{S}}$ is a union of events of the form $Z_{t,\mathfrak{S}'}$ where $\mathfrak{S}'|_s=\mathfrak{S}$.  But for any such $\mathfrak{S}'$ and any $\mathcal{L}'$-structure $\mathfrak{T}'$ with $|\mathfrak{T}'|=t$, we have
\[\mathbb{P}(X_{t,\mathfrak{T}'}\mid Z_{t,\mathfrak{S}'})=\mathbb{P}(\mathfrak{X}^{\mathfrak{S}'}=\mathfrak{T}'),\]
and, since $\mathfrak{X}^*|_s$ depends only on $\mathfrak{M}^*|_s$, 
\[\mathbb{P}(X_{s,\mathfrak{T}}\mid Z_{t,\mathfrak{S}'})=\mathbb{P}(\mathfrak{X}^{\mathfrak{S}}=\mathfrak{T}).\]

Therefore
\begin{align*}
  \mathbb{P}(X_{s,\mathfrak{T}}\mid  V_{\{\tilde v_n\},I}(\{\vec\zeta^n\})\text{ and }Z_{s,\mathfrak{S}})
&=\sum_{\mathfrak{S}':\ \mathfrak{S}'|_s=\mathfrak{S}}\mathbb{P}(X_{s,\mathfrak{T}}\mid Z_{t,\mathfrak{S}'})\mathbb{P}(Z_{t,\mathfrak{S}'}\mid V_{\{\tilde v_n\},I}(\{\vec\zeta^n\})\text{ and }Z_{s,\mathfrak{S}})\\
&=\mathbb{P}(X_{s,\mathfrak{T}}\mid Z_{s,\mathfrak{S}})\sum_{\mathfrak{S}':\ \mathfrak{S}'|_s=\mathfrak{S}}\mathbb{P}(Z_{t,\mathfrak{S}'}\mid V_{\{\tilde v_n\},I}(\{\vec\zeta^n\})\text{ and }Z_{s,\mathfrak{S}})\\
&=\mathbb{P}(X_{s,\mathfrak{T}}\mid Z_{s,\mathfrak{S}}).
\end{align*}

\end{claimproof}

Choose a countable collection of functions $\tilde v$ dense in the collection of functions generated by the $f_i$ and let $B_d$ be the union of all countably many sets of measure $0$ given by the previous claim over all $\tilde v$, $s$, $\mathfrak{S},\mathfrak{T}$.

By Theorem \ref{thm:subalgebra}, we can choose a $\sigma$-algebra $\mathcal{B}$ so that
\begin{itemize}
\item each $f_i$ is measurable with respect to $\mathcal{B}^{k_i}$ and
\item each set in $\mathcal{B}^{|\mathfrak{S}|}$ is generated by sets 
\[V_{\{\tilde v_n\}}(\{\vec\zeta^n\})=\{\{\zeta_n\}_{n\in |{\mathfrak{S}}|}\mid\prod_{n\in |{\mathfrak{S}}|}\tilde v_n(\zeta_n,\vec\zeta^n)\in I\},\]
so that any $X_{s,\mathfrak{T}}$ is independent of $\mathcal{B}$ when conditioned on $ M(\{\zeta_{n}\}_{n\in |{\mathfrak{S}}|})={\mathfrak{S}}$.
\end{itemize}

We decompose $\zeta_i=h(\eta_i,\xi_i)$, where $\mathcal{B}$ is measurable with respect to the $\eta_i$ component alone.  For $|\vec y|>1$, we set $\xi_{\vec y}=\zeta_{\vec y}$.

Now, for each $\mathfrak{S}$ with $|\mathfrak{S}|=[1,n]$, we wish to choose a single value $\eta_{{\mathfrak{S}}}$ (depending on the values chosen for $\eta_{{\mathfrak{S}}\upharpoonright[1,n']}$ for $n'<n$) so that setting
\[f_{j}({\mathfrak{S}}|_{[\max\vec x]},(\xi_{ s})_{ s\subseteq\rng\vec x},(\prec_{\vec y})_{\vec y\subseteq \vec x})=f'_{j}((g(\eta_{{\mathfrak{S}}\upharpoonright i},\xi_i))_{i\in \rng\vec x},(\xi_s)_{s\subseteq\rng\vec x,|s|>1},(\prec_{\vec y})_{\vec y\subseteq \vec x})\]
satisfies the theorem.

It suffices to show that for any finite $\mathfrak{S}$, $\mathfrak{T}$, the set of $\eta_{ {\mathfrak{S}}\upharpoonright 1},\ldots,\eta_{ {\mathfrak{S}}\upharpoonright n}=\eta_{{\mathfrak{S}}}$ such that $M(\{\eta_{ {\mathfrak{S}}\upharpoonright i}\})\mathop{=}{\mathfrak{S}}$ and
\[\mathbb{P}(Z_{s,\mathfrak{T}})\neq \mathbb{P}(\mathfrak{X}^{\mathfrak{S}}=\mathfrak{T})\]
(where the first probability is over choices of $\xi_{\vec y}$) has measure $0$.  For then we choose the sequence $\eta_{{\mathfrak{S}}\upharpoonright 1},\ldots,\eta_{{\mathfrak{S}}\upharpoonright n}$ successively, avoiding the set of measure $0$ of choices for $\eta_{{\mathfrak{S}}\upharpoonright i}$ which either belong to such a set, or which cause the set of extensions belonging to such a set to have positive measure.

Towards a contradiction, suppose that for some finite $ {\mathfrak{S}}$, $\mathfrak{T}$, the set of $\eta_{ {\mathfrak{S}}\upharpoonright 1},\ldots,\eta_{ {\mathfrak{S}}\upharpoonright n}$ such that $ M(\{\eta_{ {\mathfrak{S}}\upharpoonright i}\})\mathop{=}{\mathfrak{S}}$ and
\[\mathbb{P}(Z_{s,\mathfrak{T}})\neq \mathbb{P}(X^{\mathfrak{S}}=\mathfrak{T})\]
 has positive measure.  Then there exists some set $B$ in $\mathcal{B}^{|{\mathfrak{S}}|}$ so that $ M(\{\zeta_{\phi(n)}\})^\phi\mathop{=}{\mathfrak{S}}$ for all $\{\zeta_{\phi(n)}\}\in\mathcal{B}^{|{\mathfrak{S}}|}$ but $\mathbb{P}(Z_{s,\mathfrak{T}})\neq \mathbb{P}(X^{\mathfrak{S}}=\mathfrak{T})$.  This contradicts the construction of $\mathcal{B}$.

It follows that we may choose $\eta_{ {\mathfrak{S}}}$ by induction on $| {\mathfrak{S}}|$ and set
\[f_{j}( {\mathfrak{S}}|_{[\max\vec x]},(\xi_{ s})_{ s\subseteq\rng\vec x},(\prec_{\vec y})_{\vec y\subseteq \vec x})=f'_{j}((g(\eta_{ {\mathfrak{S}}\upharpoonright i},\xi_i))_{i\in \rng\vec s},(\xi_{ s})_{s\subseteq\rng\vec x,|s|>1},(\prec_{\vec y})_{\vec y\subseteq\vec x}).\]
\end{proof}

\begin{proof}[Proof of Theorem \ref{thm:main1}]
Follows immediately from Theorem \ref{thm:canonical rep}.
\end{proof}

\section{Concluding remarks}

\subsection{Applications to Markov chains}
A natural setting for relatively exchangeable structures is in the study of Markov chains on combinatorial state spaces.
A  {\em (discrete-time) Markov chain} on $\mathcal{X}_{\Nb}\subseteq\lN$ is a collection $\mathbf{X}=(X_t)_{t=0,1,\ldots}$ of random $\mathcal{L}$-structures whose distribution is determined by an {\em initial distribution} $\mu$ and a family of {\em transition probabilities}
\[P(x,\cdot):=\mathbb{P}\{X_{t+1}\in\cdot\mid (X_s)_{0\leq s\leq t},\ X_t=x\},\quad x\in\mathcal{X}_{\Nb},\quad\text{for all }t\geq0.\]
Given those ingredients, $\mathbf{X}$ is generated by $X_0\sim\mu$ and, given $X_t=x$, $X_{t+1}\sim P(x,\cdot)$ for every $t\geq0$.

Special cases of these processes, e.g., partition-valued processes \cite{Bertoin2006,Crane2014AOP,Pitman2005} and graph-valued processes \cite{Crane2014GraphsI,Crane2014GraphsII},  arise  in various statistical applications, where the assumptions of exchangeability and consistency are natural.
We call $\mathbf{X}$ {\em exchangeable} if $\mathbf{X}^{\sigma}:=(X_t^{\sigma})_{t=0,1,\ldots}$ and $\mathbf{X}$ are versions of the same Markov chain for every permutation $\sigma:\Nb\to\Nb$; and $\mathbf{X}$ is {\em consistent} if the restriction $\mathbf{X}|_{[n]}:=(X_t|_{[n]})_{t=0,1,\ldots}$ to $\mathcal{L}$-structures with domain $[n]$ is also a Markov chain for every $n\geq1$.
Relative exchangeability arises naturally in this context: for every $t\geq0$, the exchangeability and consistency properties imply that $X_{t+1}$ is relatively exchangeable with respect to $X_t$.
The consistency assumption is, in fact, stronger than relative exchangeability because it must account for variability in the reference structures $X_t$ for every $t\geq0$: the transition probabilities of $\mathbf{X}$ entail an ensemble of relatively exchangeable structures that fit together in an appropriate way.
We consider these and other relevant questions about combinatorial Markov processes in the companion article \cite{CraneTowsner2015b}.

\subsection{Non-trivial definable closure}\label{section:TDC}

The main result in \cite{AFP} actually holds without ultrahomogeneity under the weaker assumption of trivial (group-theoretic) definable closure:
\begin{definition}[Definable closure]\label{defn:TDC}
Let $\mathcal{L}$ be a signature and $\mathfrak{M}$ be an $\mathcal{L}$-structure.
For any $\vec x\in\Nb$, the {\em (group-theoretic) definable closure of $\vec x$ in $\mathfrak{M}$} is defined as
\[\dcl_{\mathfrak{M}}(\vec x):=\{b\in\Nb\mid g(b)=b\text{ for all automorphisms }g:\mathfrak{M}\to\mathfrak{M}\text{ that fix }\vec x\}.\]
We say that $\mathfrak{M}$ has {\em trivial definable closure} if $\dcl_{\mathfrak{M}}(\vec x)=\rng\vec x$ for all $\vec x\in\Nb$.
\end{definition}

Our construction in Theorem \ref{thm:main1}, however, requires ultrahomogeneity to construct the functions $\rho_{\mathfrak{S}}$.  
Example \ref{ex:TDC} shows that Theorem \ref{thm:main1} does not hold for general $\mathfrak{M}$ with trivial definable closure.
On the other hand, Austin \& Panchenko's \cite{AustinPanchenko2014} results for structures based in trees give representations when $\mathcal{L}'$ is a unary language where $\mathfrak{M}$ is ultrahomogeneous but fails to satisfy $n$-DAP, and in their stronger result, fails to even satisfy trivial definable closure.

These raise the following questions for future consideration.

\begin{question}
 Are there representations in the style of Theorem \ref{thm:nice1} and Theorem \ref{thm:main1} that hold when $\mathfrak{M}$ has trivial definable closure but is not ultrahomogeneous?
\end{question}

\begin{question}
Are there interesting classes of models with weaker properties than ultrahomogeneity and $n$-DAP for all $n$ with a stronger representation than that in Theorem \ref{thm:main1}?
\end{question}

%

\section*{Acknowledgements}
We thank Alex Kruckman for helpful discussions and comments on an earlier version.

\bibliography{refs}
\bibliographystyle{abbrv}

\end{document}